\colorlet{darkblue}{blue!50!black}
\colorlet{darkblue}{red!100!black}
\let\oldtocsection=\tocsection
\let\oldtocsubsection=\tocsubsection
\let\oldtocsubsubsection=\tocsubsubsection
\renewcommand{\tocsection}[2]{\hspace{0em}\oldtocsection{#1}{#2}}
\renewcommand{\tocsubsection}[2]{\hspace{1em}\oldtocsubsection{#1}{#2}}
\renewcommand{\tocsubsubsection}[2]{\hspace{2em}\oldtocsubsubsection{#1}{#2}}
\newtheorem{theorem}{Theorem}[section]
\newtheorem{lemma}[theorem]{Lemma}
\newtheorem{proposition}[theorem]{Proposition}
\newtheorem{corollary}[theorem]{Corollary}
\newtheorem{definition}[theorem]{Definition}
\newtheorem{remark}[theorem]{Remark}
\newtheorem{hypothesis}[theorem]{Hypothesis}
\let\originalleft\left
\let\originalright\right
\renewcommand{\left}{\mathopen{}\mathclose\bgroup\originalleft}
\renewcommand{\right}{\aftergroup\egroup\originalright}
\renewcommand{\d}{\/\mathrm{d}\/}
\def\e{\varepsilon}
\def\S{\mathcal{S}}
\def\L{\mathrm{L}}
\def\A{\mathrm{A}}
\def\2{\varsigma}
\def\1{\mathcal{O}}
\def\C{\mathrm{C}}
\def\c{\mathfrak{c}}
\def\ttt{\tau\wedge\tau_n}
\def\ee{\epsilon}
\def\loc{\mathrm{loc}}
\def\Ds{\mathbb{D}}
\def\B{\mathrm{B}}
\def\D{\mathrm{D}}
\def\E{\mathbb{E}}
\def\p{\mathfrak{p}}
\def\z{\zeta}
\def\v{\mathfrak{v}}
\def\W{\mathrm{W}}
\def\N{\mathbb{N}}
\def\R{\mathbb{R}}
\def\P{\mathbb{P}}
\def\H{\mathrm{H}}
\def\3{\varrho}
\newcommand{\Addresses}{{
		\footnote{
			\noindent \textsuperscript{1,2}Department of Mathematics, Indian Institute of Technology Roorkee-IIT Roorkee,
			Haridwar Highway, Roorkee, Uttarakhand 247667, INDIA.\par\nopagebreak
			\noindent  \textit{e-mail:} \texttt{Manil T. Mohan: maniltmohan@ma.iitr.ac.in, maniltmohan@gmail.com.}
			
			\textit{e-mail:} \texttt{Ankit Kumar: akumar14@mt.iitr.ac.in.}
			
			\noindent \textsuperscript{*}Corresponding author.
			
			\textit{Key words:} Stochastic generalized Burgers-Huxley equation,  Absolute continuity, Weak differentiability, Malliavin calculus.
			
			Mathematics Subject Classification (2020): Primary 60H15; Secondary 60H07; 60H20.

}}}
\begin{document}	
	
	\title[Existence of density for the solution of SGBH equation]{Absolute continuity of the solution to  stochastic generalized Burgers-Huxley equation
		\Addresses}
	
	\author[A. Kumar and M. T. Mohan]
	{Ankit Kumar\textsuperscript{1} and Manil T. Mohan\textsuperscript{2*}}

	\maketitle
	
	\begin{abstract}
		The present work deals with the global solvability as well as absolute continuity of the law of the solution to stochastic generalized Burgers-Huxley (SGBH) equation driven by multiplicative space-time white noise in a  bounded interval of $\mathbb{R}$. We first prove the existence of a unique local mild solution to SGBH equation with the help of a truncation argument and contraction mapping principle. Then global solvability results are obtained by using  uniform bounds of the local mild solution and stopping time arguments. Later, we establish a comparison theorem for the solution of SGBH equation having  higher order nonlinearities  and it plays a crucial role in this work. Then, we discuss the weak differentiability of the solution to SGBH equation in the Malliavin calculus sense.  Finally, we obtain the absolute continuity of the law of the solution with respect to the Lebesgue measure on $\mathbb{R}$,  and the existence of density with the aid of comparison theorem and weak differentiability of the solution. 
	\end{abstract}


	\section{Introduction}\label{sec1}\setcounter{equation}{0}
	The Burgers-Huxley equation is a special class of nonlinear advection-diffusion-reaction problems, which has variety of applications in material sciences, mechanical engineering, neurophysiology, etc. (cf.  dynamics of ferroelectric materials \cite{OYY}, action potential propagation in nerve fibers \cite{XWZZ},   particle transport \cite{JS}, wall motion in liquid crystals \cite{XYW}, etc.). 	The generalized Burgers-Huxley equation (GBH) equation describes a prototype model for describing the interaction between reaction mechanisms, convection effects and diffusion transports (cf. \cite{VJE,MTMAK}, etc.).  We consider the  following stochastic  generalized Burgers-Huxley (SGBH) equation perturbed by a random forcing:
	\begin{align}\label{1.01}
		\frac{\partial u(t,x)}{\partial t}&= \nu\frac{\partial^2 u(t,x)}{\partial x^2}-\alpha u^\delta(t,x)\frac{\partial u(t,x)}{\partial x}+\beta u(t,x)(1-u^\delta(t,x))(u^{\delta}(t,x)-\gamma) \nonumber\\&\quad+g(t,x,u(t,x))\frac{\partial^2\W(t,x) }{\partial x\partial t},\ \text{ for }\ (t,x)\in(0,T)\times(0,1),  \tag{1.1a}
	\end{align}	where $\nu>0$ is the viscosity coefficient, $\alpha>0$ is the advection coefficient, $\beta>0$, $\delta\geq 1$ and $\gamma \in (0,1)$ are parameters, with the Dirichlet boundary condition: 
\begin{align}\label{1.1b}\tag{1.1b}
	u(t,0)=u(t,1)=0, \; t\geq 0,
\end{align}
along with the initial condition 
\begin{align}\label{1.1c}\tag{1.1c}
u(0,x)=u_0(x). 
\end{align}
The noise coefficient $g$ is bounded and  satisfies the Lipschitz continuity in the last variable (Hypothesis \ref{H1}).  In the above equation, $\W(t,x),$ $t\geq 0, x\in\R$  is a zero mean Gaussian  process, whose covariance function is given by $$\mathbb{E}[ \W(t,x) \W(t,y)]= (t\wedge s)(x\wedge y), \ t,s\geq 0, \ x,y\in \R.$$   
In the deterministic case, for $\beta=0$ and $\delta=1$, the equation \eqref{1.01} is the classical viscous Burgers' equation  and for $\delta=1$, it is called the Burgers-Huxley equation. For the global solvability results of deterministic GBH equation and numerical studies of the model, one may refer to \cite{VJE,AKMT,MTMAK}, etc. and the references therein. 

The global solvability results for stochastic Burgers-Huxley equation perturbed by multiplicative correlated Gaussian noise is obtained in \cite{MTMSBH}.   The SGBH equation perturbed by space-time white noise (additive) is considered in the work \cite{MTMSGBH}, where the author established the existence of a  unique  mild solution using fixed point arguments. The authors in \cite{AKMTMSGBH} proved the existence of a unique mild as well as strong solution to  SGBH equation perturbed by additive Gaussian noise which is white in time and correlated in space. They have also proved the strong Feller and irreducibility properties of the  Markov semigroup associated with the solution,  which implies the uniqueness of invariant measure, and investigated the Donsker-Varadhan large deviation principle (LDP) for occupation measure. The papers discussed above do not cover the noise considered in \eqref{1.01}, and we need a different analysis to tackle the problem.  Therefore, we prove  the global existence and uniqueness of mild solution of the system \eqref{1.01}-\eqref{1.1c} utilizing the techniques available in \cite{IGDN}, where the authors considered  stochastic Burgers' equation perturbed by multiplicative space-time white noise on the real line. For the global solvability and analysis of stochastic Burgers' equation, the interested readers are referred to see \cite{DaZ,DDR,GDDG,IGDN,LZN}, etc. 
 In particular, the authors in \cite{LZN} discussed  the  existence  of a unique mild solution for stochastic Burgers' equation perturbed by a bounded, Lipschitz  nonlinearity  in a bounded interval. 
 
 The stochastic calculus of variations (or Malliavin calculus, \cite{ND}) has been used extensively to study the   existence of density for solutions of stochastic partial differential equations (SPDEs), see for  example \cite{ADMR,EPTZ,LZN,AJLDNRP,BFMZ,PLM,CMLQS,MSSAS}, etc.    The authors in \cite{EPTZ} established the absolute continuity of the solution of a  parabolic SPDE,  where the drift and diffusion  functions are assumed to be measurable and locally bounded with locally bounded derivatives. The absolute continuity of the law of the solution to stochastic Burgers' equation perturbed by space-time white noise is established in \cite{LZN}. The author in \cite{PLM} proved the existence of densities for a class of parabolic SPDEs of Burgers' type introduced in \cite{IG}.   For the  state-independent diffusion,  using the techniques of the Malliavin calculus,  the existence of a smooth density  for stochastic Burgers' equation driven by space-time white noise has been obtained in \cite{AJLDNRP}.  For a class of SPDEs like the stochastic wave and heat equations with multiplicative noise, Lipschitz coefficients, the authors in \cite{MSSAS} proved the existence of density of the law of the solution in some Besov space.   Absolute continuity of the law  of solution to a parabolic dissipative SPDE  of reaction-diffusion type perturbed by multiplicative Wiener noise in an open bounded domain in $\R^d$ with smooth boundary is established in \cite{CMLQS}. The existence of densities for stochastic differential equations (SDEs) perturbed by a stable-like L\'evy process under some non-degeneracy condition with H\"older continuous coefficients in some Besov spaces has been discussed in \cite{ADNF}.  The absolute continuity of the law of the  solution  of two stochastic fluid Lagrangian models  for viscous flow in two dimensions using Malliavin calculus is established in \cite{SSSMX}. The existence of densities for the law of finite-dimensional functional solutions of the three-dimensional stochastic Navier-Stokes equations (SNSE) has been studied in \cite{ADMR} (see \cite{MR} also). The existence and smoothness of density of solution of a nonlinear stochastic heat equation perturbed by a additive Wiener noise with the nonlinear drift term (having polynomial growth) in a bounded domain of $\R^d$ with smooth boundary has been obtained in \cite{CMENLQS}. In a recent paper \cite{COCT}, the authors proved the existence and the Besov regularity of the density of the solution for a class of parabolic SPDEs, which covers  stochastic Burgers' equation on an unbounded domain and the approach is based on the fractional integration by parts.  Recently, the absolute continuity of the law for the two-dimensional SNSE perturbed by Gaussian noise which is white in time and colored in space has been proved in \cite{BFMZ}.

Compared to stochastic Burgers' equation,   the model \eqref{1.01}-\eqref{1.1c} is  not explored much in the literature, and in this work, we shed some light on the stochastic analysis of this model. Note also that the model \eqref{1.01}-\eqref{1.1c}  contains convective, diffusive and reaction terms, which makes the analysis more challenging and interesting compared to other related models.    The three major objectives of this work are listed below: 
\begin{itemize}
	\item The first and foremost aim of this article is to establish the existence of a unique mild solution to the SGBH equation \eqref{1.01}. We follow the works \cite{LZN,MTMSGBH,IGDN}, etc. to achieve this goal. By considering a truncated system  and applying fixed point arguments, we first show the existence of a unique local mild solution to \eqref{1.01} with $u_0\in \L^p(0,1)$, for $p\geq 2\delta+1$. Later, we extend this solution to a global mild solution by using  uniform bounds of the local mild solution  and stopping time arguments. The uniform bounds of the local mild solution are obtained by showing the existence of a weak solution to a transformed system (see \eqref{316} below).  For $u_0\in \C([0,1])$, we show that the solution is regular by using the properties of the heat semigroup. 
	\item In the second part of the work, we prove a comparison theorem for the solutions of the SGBH equation \eqref{1.01}, that is, if $u_0(x)\leq v_0(x)$  for a.e. $x\in[0,1]$, then $u(t,x)\leq v(t,x)$ for all $t\in[0,T]$ and for a.e. $x\in[0,1]$, $\mathbb{P}$-a.s., where $u(t,x)$ and $v(t,x)$ are the unique mild  solutions of \eqref{1.01} with the initial data  $u_0,v_0$, respectively. We mainly borrow the ideas from the works \cite{IG,MP} to establish the comparison theorem. We should point out here that the drift nonlinearity appearing in the works \cite{IG,MP} is of quadratic order only. On the other hand, the nonlinearity of the drift of our model is of polynomial order and is of degree $\geq 2$. 
	\item Next, we  investigate the smoothness of the solution of the SGBH equation \eqref{1.01} in the sense of stochastic calculus of variations. For $u_0\in \C([0,1])$, our goal is to prove that the solution $u(t,x)$ of \eqref{1.01} is in the space $\Ds_{\loc}^{1,p},$ for $p> \max\{2\delta+1,6\}$ (in the sense of Malliavin calculus), for all $(t,x)\in [0,T]\times[0,1]$ (see section \ref{sec6} for  the definition of function spaces). In order to do this, we  assume that the coefficient $g$ of the noise is continuous and satisfies a non-degeneracy condition such that $g(0,y,u_0(y))\neq0$ for some $y\in(0,1)$. Under this assumption, we are able to prove that for any $(t,x)\in[0,T]\times[0,1]$, the law of the random variable $u(t,x)$ (solution of SGBH equation) is absolutely continuous with respect to the Lebesgue measure on $\mathbb{R}$ and hence the existence of density also by an application of the Radon-Nikodym theorem. Our result is based on the general criterion given in the work \cite{NBFH} and we follow the works \cite{LZN,EPTZ} to obtain this result. 
\end{itemize}

	The rest of the article is organized as follows. In the next section, we provide the necessary function spaces needed to obtain the global solvability results of the system \eqref{1.01}-\eqref{1.1c} as well as we prove several estimates on the operators appearing in the mild form of the solution of \eqref{1.01}-\eqref{1.1c}, which will be used in the proofs of comparison theorem and  existence of density results (Lemmas \ref{lemma3.1}, \ref{lemma3.2}, \ref{lemma3.3}). The existence and uniqueness of local as well as global mild solution of the SGBH equation \eqref{1.01} is discussed in section \ref{sec4} (Propositions \ref{theorem4.2}, \ref{lem3.4} and Theorem \ref{theorem4.3}). In section \ref{sec5}, we  prove a comparison theorem, which plays an important role in the proof of existence of density for our model (Theorem \ref{theorem5.2}).	
		We establish some technical results  to obtain the existence of density (Lemmas  \ref{lemma6.6}, \ref{lemma6.7} and Proposition \ref{prop6.5})  in section \ref{sec6}. We wind up the article  by  proving the absolute continuity  of the law of the solution to SGBH equation with respect to the Lebesgue measure on $\mathbb{R}$ and hence the existence of density in section \ref{sec7} (Theorems \ref{theorem7.1}, \ref{thm7.3}).
	
		\section{Mathematical Formulation}\label{sec2}\setcounter{equation}{0}
	This section provides the necessary function spaces needed to obtain the main  results of this paper. 
	\subsection{Function spaces} Let us fix $\mathcal{O}=(0,1)$. Let $\C_{0}^{\infty}(\1)$ denote the space of all infinite times differentiable functions having compact support in $\1$. The Lebesgue spaces are denoted by $\L^{p}(\1)$ for $p\in [1,\infty]$, and the norm in $\L^p(\1)$ is denoted by $\|\cdot\|_{\L^p}$ and for $p=2$, the inner product in $\L^2(\1)$ is denoted by $(\cdot,\cdot)$. We denote the Sobolev spaces by $\H^{k}(\1),$ for $k\in\mathbb{N}$. Let $\H_0^1(\1)$ denote the closure of $\C_{0}^{\infty}(\1)$ in $\H^1$-norm. As we are working in a bounded domain, by using Poincar\'e's inequality (see \eqref{poin} below), we infer that the norm $(\|\cdot\|_{\L^2}^2+\|\partial_{x}\cdot\|_{\L^2}^2)^{\frac{1}{2}}$ is equivalent to the seminorm $\|\partial_{x}\cdot\|_{\L^2}$ and hence $\|\partial_{x}\cdot\|_{\L^2}$ defines a norm on $\H_0^1(\1)$. We also have the continuous embedding $\H_0^1(\1) \subset \L^2(\1) \subset \H^{-1}(\1)$, where $\H^{-1}(\1)$ is the dual space of $\H_0^1(\1)$. For the bounded domain $\1,$ the embedding  $\H_0^1(\1) \subset \L^2(\1)$ is compact. The duality pairing between $\H_0^1(\1)$ and its dual $\H^{-1}(\1)$ ({based on the inner product in the Hilbert space $\L^2(\1)$}), and $\L^p(\mathcal{O})$ and its dual $\mathrm{L}^{\frac{p}{p-1}}(\1)$ are   denoted by $\langle \cdot, \cdot \rangle$. 	
	In one dimension,  the embedding of $\H^{\sigma}(\1) \subset \L^p(\1)$ is compact for any $\sigma> \frac{1}{2}-\frac{1}{p}$, for $p\geq 2$. 

	For every $p\geq 1$ and $p>1+\e$, we denote by {$\W^{\frac{1+\e}{p},p}([0,1];H)$} (fractional Sobolev space) as  the set of continuous functions $f:[0,1]\to  H$ such that \begin{align*}
		\|f\|_{\frac{1+\e}{p},p,H}^p = \int_{0}^{1}\int_{0}^{1}\frac{\|f(x)-f(y)\|_{H}^p}{|x-y|^{2+\e}}\d x\d y<\infty,
	\end{align*}where $H$ denotes a real separable Hilbert space with the norm $\|\cdot\|_H$ and we will omit the subindex $H$ whenever $H=\R$ in the sequel. Also, from  \cite{GRR} for any $p\geq 1$ and $\e>0$, we infer that 
\begin{align}\label{{2.01}}
	\|f\|_{\L^{\infty} (\mathcal{O};H)}\leq \|f(0)\|_H+C_{\e,p}\|f\|_{\frac{1+\e}{p},p,H}.
\end{align}

	\subsection{Operators and their properties}\label{sec3}\setcounter{equation}{0}
In this subsection, we provide some operators and technical results which play a crucial role in the proofs of solution of the equation \eqref{4.01} established  in section \ref{sec4} and the comparison theorem obtained  in section \ref{sec5}. Before defining the operators, we introduce the the fundamental solution $G(t,x,y)$ of the  heat equation in the interval $[0,1]$ with the Dirichlet boundary conditions, defined by
\begin{align*}
	G(t,x,y)=\frac{1}{\sqrt{4\pi t}}\sum_{n=-\infty}^{\infty}\bigg[e^{-\frac{(y-x-2n)^2}{4t}}-e^{-\frac{(y+x-2n)^2}{4t}}\bigg],
\end{align*}for all $0<t\leq T$ and $x,y\in[0,1]$.
We use the following estimates frequently in the sequel (cf. \cite{LZN, IG, IGDN}):
\begin{align}\label{A1}{\tag{A1}}
	|G(t,x,y)| \leq Kt^{-\frac{1}{2}}e^{-\frac{|x-y|^2}{\ell_1t}},
\end{align}
\begin{align}\label{A2}{\tag{A2}}
	\bigg|\frac{\partial G}{\partial y}(t,x,y)\bigg| &\leq Kt^{-1}e^{-\frac{|x-y|^2}{\ell_2t}},
\end{align}

\begin{align}\label{A3}{\tag{A3}}
	\bigg|\frac{\partial G}{\partial t}(t,x,y)\bigg| &\leq Kt^{-\frac{3}{2}}e^{-\frac{|x-y|^2}{\ell_3t}},
\end{align}
\begin{align}\label{A4}{\tag{A4}}
	\bigg|\frac{\partial^2 G}{\partial y\partial t}(t,x,y)\bigg| &\leq Kt^{-2}e^{-\frac{|x-y|^2}{\ell_4t}},
\end{align}
\begin{align}\label{A5}{\tag{A5}}
	|G(t,x,z)-G(t,y,z)| &\leq K|x-y|^{\vartheta}t^{-\frac{\vartheta}{2}-\frac{1}{2}}\max\bigg\{e^{-\frac{|x-z|^2}{\ell_5t}},e^{-\frac{|y-z|^2}{\ell_5t}}\bigg\},
\end{align}
\begin{align}\label{A6}{\tag{A6}}
	\bigg|\frac{\partial G}{\partial z}(t,x,z)-\frac{\partial G}{\partial z}(t,y,z)\bigg| \leq K|x-y|^{\vartheta}t^{-1-\frac{\vartheta}{2}}\max\bigg\{e^{-\frac{|x-y|^2}{\ell_6t}},e^{-\frac{|x-z|^2}{\ell_6t}}\bigg\},
\end{align}for all $0<t\leq T, \ x,y,z\in[0,1]$, $K,\;\ell_i$ for $i=1,\ldots,6$, are some positive constants and $\vartheta\in[0,1]$.  The following estimate (see \cite{IG}) has been used frequently in the sequel: \begin{align*}\label{A7}\tag{A7}
	\big\|e^{-\frac{|\cdot|^2}{\ell(t-s)}}\big\|_{\L^p} \leq C (t-s)^{\frac{1}{2p}},
\end{align*} for any positive constant $\ell$ and $p\geq 1$. Let us define the operators $J_1$ and $J_2$ by 
\begin{align*}
J_1(v)(t,x)&:=\int_{0}^{t}\int_{0}^{1}G(t-s,x,y)\big((1+\gamma)v^{\delta+1}-\gamma v-v^{2\delta+1}\big)(s,y)\d y\d s\\&=:((1+\gamma)J_{11}-\gamma J_{12}-J_{13})v(t,x), \\
J_2(v)(t,x)&:=\int_{0}^{t}\int_{0}^{1}\frac{\partial G}{\partial y}(t-s,x,y)v^{\delta+1}(s,y)\d y \d s,
\end{align*} for all $t\in[0,T]$, $x,y\in[0,1]$, where $v$ is a function in $\L^{\infty}(0,T;\L^p(\1))$ for some $p\geq 1$. 

\begin{lemma}\label{lemma3.1}
	Under $\eqref{A1}-\eqref{A7}$, we have 
	\begin{enumerate}
		\item[(a)] $J_{11}$ is a bounded operator from $\L^{\eta_1}(0,T;\L^p(\1))$ into $\C([0,T];\L^p(\1))$ for $p\geq \delta+1$ and $\eta_1>\frac{2p(\delta+1)}{2p-\delta}$.
		\item[(b)] $J_{12}$ is a bounded operator from $\L^{\eta_2}(0,T;\L^p(\1))$ into $\C([0,T];\L^p(\1))$ for $p\geq 1$ and  $\eta_2> 1$.
		\item[(c)] $J_{13}$ is a bounded operator from $\L^{\eta_3}(0,T;\L^p(\1))$ into $\C([0,T];\L^p(\1))$ for $p\geq 2\delta+1$  and $\eta_3>\frac{p(2\delta+1)}{p-\delta}$.
	\end{enumerate}Moreover, for the values of $\eta_i$, $i=1,2,3,$ given above, the following estimates hold: \begin{enumerate}

		\item For every $0\leq t\leq T$, there is a constants $C$ such that 
		\begin{align*}
			\|J_{11}v(t,\cdot)\|_{\L^p}	&\leq C\int_{0}^{t}(t-s)^{-\frac{\delta}{2p}}\|v(s)\|_{\L^p}^{\delta+1}\d s\leq Ct^{-\frac{\delta}{2p}+1-\frac{\delta+1}{\eta_1}}\bigg(\int_{0}^{t}\|v(s)\|_{\L^p}^{\eta_1}\d s\bigg)^{\frac{\delta+1}{\eta_1}}  , \\
			\|J_{12}v(t,\cdot)\|_{\L^p}	&\leq C\int_{0}^{t}\|v(s)\|_{\L^p}\d s\leq Ct^{1-\frac{1}{\eta_2}}\bigg(\int_{0}^{t}\|v(s)\|_{\L^p}^{\eta_2}\d s\bigg)^{\eta_2} , \\
			\|J_{13}v(t,\cdot)\|_{\L^p}	&\leq C\int_{0}^{t}(t-s)^{-\frac{\delta}{p}}\|v(s)\|_{\L^p}^{2\delta+1}\d s \leq Ct^{-\frac{\delta}{p}+1-\frac{2\delta+1}{\eta_3}}\bigg(\int_{0}^{t}\|v(s)\|_{\L^p}^{\eta_3}\d s\bigg)^{\frac{2\delta+1}{\eta_3}}.
		\end{align*} 
		\item For $0{\;\leq s}\leq t \leq T$, $0<\vartheta_1<1-\frac{2p(\delta+1)+\delta\eta_1}{2p\eta_1}$, $0<\vartheta_2<1-\frac{1}{\eta_2}$ and $0<\vartheta_3<1-\frac{p(2\delta+1)+\delta\eta_3}{p\eta_3}$, there is a constant $C$ such that 
		\begin{align*}
			\|J_{11}v(t,\cdot)-J_{11}v(s,\cdot)\|_{\L^p}& \leq 
			C(t-s)^{\vartheta_1}\bigg(\int_{0}^{t}\|v(s)\|_{\L^p}^{\eta_1}\d s\bigg)^{\frac{\delta+1}{\eta_1}}, \\
			\|J_{12}v(t,\cdot)-J_{12}v(s,\cdot)\|_{\L^p}& \leq 
			C(t-s)^{\vartheta_2}\bigg(\int_{0}^{t}\|v(s)\|_{\L^p}^{\eta_2}\d s\bigg)^{\frac{1}{\eta_2}},
			\\ 	\|J_{13}v(t,\cdot)-J_{13}v(s,\cdot)\|_{\L^p}& \leq 
			C(t-s)^{\vartheta_3}\bigg(\int_{0}^{t}\|v(s)\|_{\L^p}^{\eta_3}\d s\bigg)^{\frac{2\delta+1}{\eta_3}}.
		\end{align*}
		\item For every $0\leq t \leq T$, $0<\varrho_4<2\big(1-\frac{2p(\delta+1)+\delta\eta_1}{2p\eta_1}\big)$, $0<\varrho_5<2\big(1-\frac{1}{\eta_2}\big)$ and $0<\varrho_6<2\big(1-\frac{p(2\delta+1)+\delta\eta_3}{p\eta_3}\big)$, there is a constant $C$ such that \begin{align*}
			\|J_{11}v(t,\cdot)-J_{11}v(t,\cdot+z)\|_{\L^p}&\leq C|z|^{\varrho_4}t^{-\frac{\delta}{2p}-\frac{\varrho_4}{2}+1-\frac{\delta+1}{\eta_1}}\bigg(\int_{0}^{t}\|v(s)\|_{\L^p}^{\eta_1}\d s\bigg)^{\frac{\delta+1}{\eta_1}} ,
			\\
			\|J_{12}v(t,\cdot)-J_{12}v(t,\cdot+z)\|_{\L^p}&\leq C|z|^{\varrho_5}t^{-\frac{\varrho_5}{2}+1-\frac{1}{\eta_2}}\bigg(\int_{0}^{t}\|v(s)\|_{\L^p}^{\eta_2}\d s\bigg)^{\frac{1}{\eta_2}} ,
			\\
			\|J_{13}v(t,\cdot)-J_{13}v(t,\cdot+z)\|_{\L^p} & \leq C|z|^{\varrho_6}t^{-\frac{\delta}{p}-\frac{\varrho_6}{2}+1-\frac{2\delta+1}{\eta_3}}\bigg(\int_{0}^{t}\|v(s)\|_{\L^p}^{\eta_3}\d s\bigg)^{\frac{2\delta+1}{\eta_3}},
		\end{align*}for all $z\in\R$.  We also set $J_{1i}v(t,y):=0, \text{ for } i=\{1,2,3\}$, whenever $y\in\R\backslash[0,1]$.
	\end{enumerate}
\end{lemma}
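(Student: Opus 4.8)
The plan is to deduce every assertion from three elementary inequalities applied to the Gaussian bounds \eqref{A1}--\eqref{A7}: Minkowski's integral inequality, to move the spatial $\L^p$-norm inside the time integral; Young's convolution inequality in the space variable, which is legitimate because $|G(\tau,x,y)|$ is dominated by \eqref{A1} by a genuine function of $x-y$ once the relevant Gaussians are extended to all of $\R$, even though $G$ itself on $[0,1]$ is only a sum of reflected Gaussians; and Hölder's inequality in time. The repeated appearance of $\|v^{\delta+1}(s)\|_{\L^{p/(\delta+1)}}=\|v(s)\|_{\L^p}^{\delta+1}$ (and of $\|v^{2\delta+1}(s)\|_{\L^{p/(2\delta+1)}}=\|v(s)\|_{\L^p}^{2\delta+1}$) is what forces $p\geq\delta+1$, respectively $p\geq2\delta+1$.

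For the pointwise-in-time bounds in item (1) I would write, for instance,
\[
\|J_{11}v(t,\cdot)\|_{\L^p}\leq\int_0^t\Big\|\int_0^1 G(t-s,\cdot,y)v^{\delta+1}(s,y)\d y\Big\|_{\L^p}\d s
\]
by Minkowski, bound the inner norm by Young's inequality as $\|G(t-s,x,\cdot)\|_{\L^{r_0}}\|v^{\delta+1}(s)\|_{\L^{p/(\delta+1)}}$ with $1/r_0=1-\delta/p$, and estimate $\|G(\tau,x,\cdot)\|_{\L^{r_0}}\leq C\tau^{-\delta/(2p)}$ using \eqref{A1} together with \eqref{A7}. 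Hölder in time with exponent $(\eta_1/(\delta+1))'$ then yields the stated bound, and the hypothesis $\eta_1>\frac{2p(\delta+1)}{2p-\delta}$ is precisely the condition that $(t-s)^{-\delta/(2p)}$ be integrable against $\L^{\eta_1/(\delta+1)}$ near $s=t$, equivalently that the resulting power of $t$ be nonnegative. The operators $J_{12}$ (take $v(s)\in\L^p$, $r_0=1$, so the kernel prefactor cancels) and $J_{13}$ (take $|v(s)|^{2\delta+1}\in\L^{p/(2\delta+1)}$, $r_0=p/(p-2\delta)$, prefactor $(t-s)^{-\delta/p}$) are handled identically and give $\eta_2>1$ and $\eta_3>\frac{p(2\delta+1)}{p-\delta}$.

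For the time increments in item (2) I would split $J_{11}v(t,\cdot)-J_{11}v(s,\cdot)$ into the integral over $[s,t]$ of the fresh contribution, which is treated exactly as in item (1) and produces a factor $(t-s)^{1-\frac{\delta+1}{\eta_1}-\frac{\delta}{2p}}$ dominating $(t-s)^{\vartheta_1}$ on $[0,T]$, plus the integral over $[0,s]$ of $[G(t-r,\cdot,y)-G(s-r,\cdot,y)]v^{\delta+1}(r,y)$. For the latter I would control $\|G(t-r,x,\cdot)-G(s-r,x,\cdot)\|_{\L^{r_0}}$ by interpolation: \eqref{A1} and \eqref{A7} give $\|G(\tau,x,\cdot)\|_{\L^{r_0}}\leq C\tau^{-\delta/(2p)}$, while the fundamental theorem of calculus combined with \eqref{A3} and \eqref{A7} gives $\|G(t-r,x,\cdot)-G(s-r,x,\cdot)\|_{\L^{r_0}}\leq C(t-s)(s-r)^{-1-\delta/(2p)}$; interpolating with weights $1-\vartheta_1$ and $\vartheta_1$ produces $C(t-s)^{\vartheta_1}(s-r)^{-\delta/(2p)-\vartheta_1}$, after which Young and Hölder in $r$ finish the estimate, the range $0<\vartheta_1<1-\frac{2p(\delta+1)+\delta\eta_1}{2p\eta_1}$ being exactly what keeps $(s-r)^{-\delta/(2p)-\vartheta_1}$ integrable against $\L^{\eta_1/(\delta+1)}$. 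Item (3) is more direct: the \emph{spatial} increment of $G$ is controlled in convolution form by \eqref{A5}, so Young's inequality against the translated Gaussian followed by Hölder in time gives the stated powers of $|z|$ and $t$; the convention $J_{1i}v(t,y):=0$ for $y\notin[0,1]$ is absorbed by observing that $G$ vanishes at the endpoints, so the boundary strip of width $|z|$ contributes only $O(|z|^{\varrho_i})$. The cases $J_{12},J_{13}$ in items (2) and (3) follow the same template with the exponents matching by bookkeeping.

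Finally, (a)--(c) are corollaries: item (1) gives $J_{1i}v(t,\cdot)\in\L^p$ with a bound uniform over $t\in[0,T]$ and $J_{1i}v(0,\cdot)=0$, and item (2) gives Hölder, hence ordinary, continuity of $t\mapsto J_{1i}v(t,\cdot)$ in $\L^p$, so that $J_{1i}v\in\C([0,T];\L^p)$ with the asserted boundedness (nonlinear, of order $\delta+1$, $1$, $2\delta+1$ respectively). I expect the main obstacle to be the interpolation step inside item (2): one must keep precise track of the two competing singularities $\tau^{-\delta/(2p)}$ coming from $G$ itself and $\tau^{-1-\delta/(2p)}$ coming from $\partial_t G$, and check that the interpolated exponent is integrable against the time-integrability class of $v$ under exactly the constraints placed on $\vartheta_i$ (and, for item (3), on $\varrho_i$); the remaining work is routine manipulation with the three inequalities above.
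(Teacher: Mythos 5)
Your proposal is correct and follows essentially the same route as the paper: Minkowski's inequality plus Young's convolution inequality (via \eqref{A1} and \eqref{A7}) plus H\"older in time for the pointwise bounds in (1), the $M+N$ splitting with the kernel difference expressed through $\partial_t G$ and \eqref{A3} for the time increments in (2), and \eqref{A5} for the spatial increments in (3), with membership in $\C([0,T];\L^p(\1))$ then deduced from (1)--(2) exactly as you indicate. The only cosmetic difference is in item (2), where you interpolate between the bounds $\tau^{-\frac{\delta}{2p}}$ and $(t-s)(s-r)^{-1-\frac{\delta}{2p}}$, whereas the paper extracts the factor $(t-s)^{\vartheta_i}$ by a H\"older split in the inner time variable with $\vartheta_i+\varrho_i=1$; both produce the same singularity $(s-r)^{-\frac{\delta}{2p}-\vartheta_i}$ (respectively $(s-r)^{-\frac{\delta}{p}-\vartheta_3}$ for $J_{13}$) and hence the same admissible ranges for the $\vartheta_i$ and $\varrho_i$.
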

\begin{proof}
	Here we discuss the estimates for $J_{13}v(t,\cdot)$ only.  Using similar ideas, one can establish the estimates  for $J_{11}v(t,\cdot)$ and $J_{12}v(t,\cdot)$.
	Using Minkowski's inequality, the estimates \eqref{A1}, \eqref{A7}, Young's inequality for convolution and H\"older's inequality, we obtain 
	\begin{align*}
		\|J_{13}v(t,\cdot)\|_{\L^p}&=\bigg(\int_{0}^{1}\bigg|\int_{0}^{t}\int_{0}^{1}G(t-s,x,y)v^{2\delta+1}(s,y)\d y\d s\bigg|^p\d x\bigg)^{\frac{1}{p}} \\& 
		\leq C\int_{0}^{t}(t-s)^{-\frac{1}{2}}\big\|e^{-\frac{|\cdot|^2}{\ell_1(t-s)}}*|v^{2\delta+1}(s,\cdot)|\big\|_{\L^p}\d s \\& \leq C\int_{0}^{t}(t-s)^{{-\frac{\delta}{p}}}\|v(s)\|_{\L^p}^{2\delta+1}\d s 
		\\&\leq C\bigg(\int_{0}^{t}(t-s)^{-\frac{\delta\eta_3}{p(\eta_3-2\delta-1)}}\d s\bigg)^{1-\frac{2\delta+1}{\eta_3}}\bigg(\int_{0}^{t}\|v(s)\|_{\L^p}^{\eta_3}\d s\bigg)^{\frac{2\delta+1}{\eta_3}} \\& \leq Ct^{-\frac{\delta}{p}+1-\frac{2\delta+1}{\eta_3}}\bigg(\int_{0}^{t}\|v(s)\|_{\L^p}^{\eta_3}\d s\bigg)^{\frac{2\delta+1}{\eta_3}}, 
	\end{align*}for all $0\leq t\leq T$, where $p\geq 2\delta+1$ and $\eta_3>\frac{p(2\delta+1)}{p-\delta}$. Now, we consider 
	\begin{align*}
		\|J_{13}v(t,\cdot)-J_{13}v(s,\cdot)\|_{\L^p} \leq M+N, 
	\end{align*}where \begin{align*}
		M&=\bigg\|\int_{s}^{t}\big(G(t-r,0,\cdot)*v^{2\delta+1}(r,\cdot)\big)\d r\bigg\|_{\L^p}, 
	\end{align*}and \begin{align*}
		N=\bigg\|\int_{0}^{s}\big[\big(G(t-r,0,\cdot)-G(s-r,0,\cdot)\big)*v^{2\delta+1}(r,\cdot)\big]\d r\bigg\|_{\L^p}.
	\end{align*} 
Using the first part of the Lemma, one can estimate $M$ as
	\begin{align*}
		M\leq C\int_{s}^{t}(t-r)^{{-\frac{\delta}{p}}}\|v(r)\|_{\L^p}^{2\delta+1}\d r. 
	\end{align*}For $N$, we use the estimates \eqref{A3}, \eqref{A7} and Young's and H\"older's inequalities, for $\vartheta_3+\varrho_3=1$, to get 
	\begin{align*}
		N&=\left\|\int_0^s\int_s^t\frac{\partial G(\theta-r,0,\cdot)}{\partial t}*v^{2\delta+1}(r,\cdot)\d\theta\d r\right\|_{\L^p}\\&\leq C\int_{0}^{s}\int_{s}^{t}(\theta-r)^{-\frac{3}{2}}\big\|e^{-\frac{|\cdot|^2}{\ell_3(\theta-r)}}*|v^{2\delta+1}(r,\cdot)|\big\|_{\L^p}\d \theta\d r \\& \leq C\int_{0}^{s}\int_{s}^{t}(\theta-r)^{-\frac{\delta}{p}-1}\|v(r)\|_{\L^p}^{2\delta+1}\d \theta \d r\\& \leq C(t-s)^{\vartheta_3}\int_{0}^{s}\bigg(\int_{s}^{t}(\theta-r)^{-\frac{1}{\varrho_3}\big(\frac{\delta}{p}+1\big)}\d \theta\bigg)^{\varrho_3}\|v(r)\|_{\L^p}^{2\delta+1}\d r\\& \leq 
		C(t-s)^{\vartheta_3}\int_{0}^{s}(s-r)^{-\frac{\delta}{p}-1+\varrho_3}\|v(r)\|_{\L^p}^{2\delta+1}\d r	\\& \leq C(t-s)^{\vartheta_3}\bigg(\int_{0}^{s}(s-r)^{\frac{\eta_3}{\eta_3-2\delta-1}\big(-\frac{\delta}{p}-1+\varrho_3\big)}\d r\bigg)^{1-\frac{2\delta+1}{\eta_3}}\bigg(\int_{0}^{s}\|v(r)\|_{\L^p}^{\eta_3}\d r\bigg)^{\frac{2\delta+1}{\eta_3}} \\&\leq 	C(t-s)^{\vartheta_3}\bigg(\int_{0}^{s}\|v(r)\|_{\L^p}^{\eta_3}\d r\bigg)^{\frac{2\delta+1}{\eta_3}} 
		\\&\leq 	C(t-s)^{\vartheta_3}\bigg(\int_{0}^{t}\|v(s)\|_{\L^p}^{\eta_3}\d s\bigg)^{\frac{2\delta+1}{\eta_3}} ,	
	\end{align*}for all $0\leq s\leq t\leq T$, where $0<\vartheta_3<1-\frac{p(2\delta+1)+\delta\eta_3}{p\eta_3}$. Next, we consider \begin{align*}
		\|J_{13}v(t,\cdot)-J_{13}v(t,\cdot+z)\|_{\L^p} \leq P+Q, 
	\end{align*}
	where \begin{align*}
		P &= \int_{0}^{t}\bigg\{\int_{0}^{1}\chi_{\{x+z\in[0,1]\}}\bigg|\int_{0}^{1}\chi_{\{|x-y|\leq |z|\}}|G(t-s,x+z,y)-G(t-s,x,y)|\\& \qquad \times v^{2\delta+1}(s,y)\d y\bigg|^p\d x\bigg\}^{\frac{1}{p}}\d s,
	\end{align*}and 
	\begin{align*}
		Q&= \int_{0}^{t}\bigg\{\int_{0}^{1}\chi_{\{x+z\in[0,1]\}}\bigg|\int_{0}^{1}\chi_{\{|x-y|> |z|\}}|G(t-s,x+z,y)-G(t-s,x,y)|\\& \qquad \times v^{2\delta+1}(s,y)\d y\bigg|^p\d x\bigg\}^{\frac{1}{p}}\d s,
	\end{align*}where $\chi_{A}$ denotes the characteristic function of the set $A$. Using Minkowski's inequality, the estimates \eqref{A5}, \eqref{A7}, Young's and H\"older's inequalities, we estimate $P$ as
	\begin{align*}
		P &\leq C|z|^{\varrho_6}\int_{0}^{t}\bigg\{\int_{0}^{1}\bigg|\int_{0}^{1} (t-s)^{-\frac{1}{2}-\frac{\varrho_6}{2}}\chi_{\{|x-y|\leq |z|\}}\max\left\{e^{-\frac{|x+z-y|^2}{\ell_5(t-s)}},e^{-\frac{|x-y|^2}{\ell_5(t-s)}}\right\}\\&\qquad \times v^{2\delta+1}(s,y)\d y\bigg|^p\d x \bigg\}^{\frac{1}{p}}\d s \\& \leq C|z|^{\varrho_6}\int_{0}^{t}(t-s)^{-\frac{\delta}{p}-\frac{\varrho_6}{2}}\|v(s)\|_{\L^p}^{2\delta+1}\d s \\& \leq C|z|^{\varrho_6}t^{-\frac{\delta}{p}+\frac{\varrho_6}{2}+1-\frac{2\delta+1}{\eta_3}}\bigg(\int_{0}^{t}\|v(s)\|_{\L^p}^{\eta_3}\d s\bigg)^{\frac{2\delta+1}{\eta_3}},
	\end{align*}for all $t\in[0,T],\; z\in\R$, where $0<\varrho_6<2\big(1-\frac{p(2\delta+1)+\delta\eta_3}{p\eta_3}\big)$.
	Applying  Minkowski's inequality,  the estimates \eqref{A5}, \eqref{A7}  and Young's inequality  on $Q$, we find 
	\begin{align*}
		Q &\leq C|z|^{\varrho_6}\int_{0}^{t}\bigg\{\int_{0}^{1}\bigg|\int_{0}^{1} (t-s)^{-\frac{1}{2}-\frac{\varrho_6}{2}}\chi_{\{|x-y|> |z|\}}\max\left\{e^{-\frac{|x+z-y|^2}{\ell_5(t-s)}},e^{-\frac{|x-y|^2}{\ell_5(t-s)}}\right\}\\&\qquad\times v^{2\delta+1}(s,y)\d y\bigg|^p\d x \bigg\}^{\frac{1}{p}}\d s  \\& \leq 
		C|z|^{\varrho_6}\int_{0}^{t}(t-s)^{-\frac{(\varrho_6+1)}{2}}\bigg\{\int_{0}^{1}\bigg|\int_{0}^{1}\chi_{\{|x-y|>|z|\}}e^{-\frac{|\theta-y|^2}{\ell_5(t-s)}}v^{2\delta+1}(s,y)\d y\bigg|^p\d x\bigg\}^{\frac{1}{p}}\d s, 
	\end{align*} where $\theta$ lies between $x$ and $x+z$. Whenever $|x-y|\geq |z|$ and $\theta$ lies  between $x$ to $x+z$,  the value of $|\theta-y|$ is either $\geq \frac{1}{\sqrt{2}}|x-z-y|$ or $\frac{1}{\sqrt{2}}|x+z-y|$. Applying Young's  and H\"older's inequalities in the above estimate, we obtain 
	\begin{align*}
		Q &\leq C|z|^{\varrho_6}\int_{0}^{t}(t-s)^{-\frac{\delta}{p}-\frac{\varrho_6}{2}}\|v(s)\|_{\L^p}^{2\delta+1}\d s \\& \leq C|z|^{\varrho_6}t^{-\frac{\delta}{p}+\frac{\varrho_6}{2}+1-\frac{2\delta+1}{\eta_3}}\bigg(\int_{0}^{t}\|v(s)\|_{\L^p}^{\eta_3}\d s\bigg)^{\frac{2\delta+1}{\eta_3}},
	\end{align*} for all $t\in[0,T],\; z\in\R$, where $0<\varrho_6<2\big(1-\frac{p(2\delta+1)+\delta\eta_3}{p\eta_3}\big)$.
	
	Proofs of the other estimates   follow on the similar lines as we did for $J_{13}v(t,\cdot)$.  To complete the proof of this Lemma, we choose $\eta=\max\{\eta_1,\eta_2,\eta_3\}$.
\end{proof}
\begin{lemma}\label{lemma3.2}
	Under  $\eqref{A2}-\eqref{A7}$, for $p\geq \delta+1$ and $\eta>\frac{2p(\delta+1)}{p-\delta}$, the operator $J_2$ is bounded from $\L^\eta(0,T;\L^p(\1))$ into $\C([0,T];\L^p(\1))$. Moreover the following estimates hold for $p\geq  \delta+1$:
	\begin{enumerate}
		\item For every $0\leq t\leq T$, there is a constant $C$ such that 
		\begin{align*}
			\|J_2v(t,\cdot)\|_{\L^{p}}&\leq C\int_{0}^{t}(t-s)^{-\frac{\delta}{2p}-\frac{1}{2}}\|v(s)\|_{\L^p}^{\delta+1}\d r\\&\leq Ct^{-\frac{\delta}{2p}+\frac{1}{2}-\frac{\delta+1}{\eta}}\bigg(\int_{0}^{t}\|v(s)\|_{\L^p}^\eta\d s\bigg)^{\frac{\delta+1}{\eta}}.
		\end{align*}
		\item For $0{\;\leq s}\leq t\leq T$, $0<\vartheta<\frac{1}{2}\big(1-\frac{2p(\delta+1)+\delta\eta}{p\eta}\big)$, there is a constant $C$ such that
		\begin{align*}
			\|J_2v(t,\cdot)-J_2v(s,\cdot)\|_{\L^{p}} \leq 	C(t-s)^{\vartheta}\bigg(\int_{0}^{t}\|v(s)\|_{\L^p}^{\eta}	\d s\bigg)^{\frac{\delta+1}{\eta}}.
		\end{align*}
		\item For every $0\leq t\leq T,\; 0<\varrho_7<1-\frac{2p(\delta+1)+\delta\eta}{p\eta}$, there is a constant $C$ such that 
		\begin{align*}
			\|J_2v(t,\cdot)-J_2v(t,\cdot+z)\|_{\L^{p}}\leq C|z|^{\varrho_7}t^{-\frac{\delta}{2p}-\frac{\varrho_7}{2}+\frac{1}{2}-\frac{\delta+1}{\eta}}\bigg(\int_{0}^{t}\|v(s)\|_{\L^p}^{\eta}\d s\bigg)^{\frac{\delta+1}{\eta}},
		\end{align*}for all $z\in\R$. We also set $J_2v(t,y):=0$, whenever $y\in\R\backslash[0,1]$.
	\end{enumerate}
\end{lemma}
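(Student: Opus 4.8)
The plan is to follow the pattern of Lemma \ref{lemma3.1}, and in particular the argument given there for $J_{13}$, with the kernel bound \eqref{A1} replaced by \eqref{A2}, the time‑derivative bound \eqref{A3} replaced by \eqref{A4}, and the spatial H\"older bound \eqref{A5} replaced by \eqref{A6}. The only structural change is that $\frac{\partial G}{\partial y}$ carries the stronger singularity $t^{-1}$ in place of the $t^{-1/2}$ of $G$; this is precisely why the hypotheses are strengthened to $p\geq\delta+1$ and $\eta>\frac{2p(\delta+1)}{p-\delta}$, and it is the latter bound that renders all the time integrals below convergent and produces the powers of $t$ recorded in the statement.

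For the pointwise $\L^p$ bound (and boundedness, Step 1), I would first apply Minkowski's integral inequality to bring the $\L^p(\1)$‑norm inside the $s$‑integral, then dominate the inner spatial integral using \eqref{A2} by $(t-s)^{-1}\big\|e^{-|\cdot|^2/(\ell_2(t-s))}*|v^{\delta+1}(s,\cdot)|\big\|_{\L^p}$. Young's convolution inequality with exponent $\frac{p}{p-\delta}$ on the Gaussian and $\frac{p}{\delta+1}$ on $|v^{\delta+1}(s,\cdot)|=|v(s,\cdot)|^{\delta+1}$ (the condition $p\geq\delta+1$ being exactly what makes the second exponent admissible), together with \eqref{A7}, gives the Gaussian contribution $(t-s)^{\frac{p-\delta}{2p}}$, so that altogether $\|J_2v(t,\cdot)\|_{\L^p}\leq C\int_0^t(t-s)^{-\frac12-\frac{\delta}{2p}}\|v(s)\|_{\L^p}^{\delta+1}\d s$. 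H\"older's inequality in time with exponents $\frac{\eta}{\delta+1}$ and $\frac{\eta}{\eta-\delta-1}$ then yields the factor $t^{-\frac{\delta}{2p}+\frac12-\frac{\delta+1}{\eta}}$, the time singularity being integrable (i.e. $(\tfrac12+\tfrac{\delta}{2p})\tfrac{\eta}{\eta-\delta-1}<1$) precisely when $\eta>\frac{2p(\delta+1)}{p-\delta}$; under this same hypothesis the exponent $-\frac{\delta}{2p}+\frac12-\frac{\delta+1}{\eta}$ is strictly positive, so $\|J_2v(t,\cdot)\|_{\L^p}\to0$ as $t\to0$, giving continuity at the origin. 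For H\"older continuity in $t$ (Step 2), I would write $\|J_2v(t,\cdot)-J_2v(s,\cdot)\|_{\L^p}\leq M+N$, where $M$ collects the contribution of the time strip $[s,t]$ and $N=\big\|\int_0^s\int_s^t\frac{\partial^2G}{\partial y\partial t}(\theta-r,\cdot,\cdot)*v^{\delta+1}(r,\cdot)\d\theta\d r\big\|_{\L^p}$ collects the kernel difference $\frac{\partial G}{\partial y}(t-r,\cdot,\cdot)-\frac{\partial G}{\partial y}(s-r,\cdot,\cdot)=\int_s^t\frac{\partial^2G}{\partial y\partial t}(\theta-r,\cdot,\cdot)\d\theta$. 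The term $M$ is controlled directly by Step 1 applied over $[s,t]$; for $N$, estimate \eqref{A4} with Young's inequality bounds the integrand by $(\theta-r)^{-\frac32-\frac{\delta}{2p}}\|v(r)\|_{\L^p}^{\delta+1}$, and then, with $\vartheta+\varrho=1$, H\"older's inequality in $\theta$ extracts $(t-s)^{\vartheta}$ and leaves $(s-r)^{-\frac32-\frac{\delta}{2p}+\varrho}$, after which H\"older's inequality in $r$ with exponent $\frac{\eta}{\delta+1}$ gives the claim for $\varrho>\tfrac12+\tfrac{\delta}{2p}+\tfrac{\delta+1}{\eta}$, i.e. $\vartheta<\frac12\big(1-\frac{2p(\delta+1)+\delta\eta}{p\eta}\big)$, the admissible range for $\varrho$ being nonempty exactly because $\eta>\frac{2p(\delta+1)}{p-\delta}$. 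Combining $M$ and $N$ shows $t\mapsto J_2v(t,\cdot)$ is H\"older continuous into $\L^p(\1)$, and with Step 1 this gives $J_2\colon\L^\eta(0,T;\L^p(\1))\to\C([0,T];\L^p(\1))$.

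For the spatial increment (Step 3), I would split $\|J_2v(t,\cdot)-J_2v(t,\cdot+z)\|_{\L^p}\leq P+Q$ by inserting the cutoffs $\chi_{\{x+z\in[0,1]\}}$ together with $\chi_{\{|x-y|\leq|z|\}}$ (for $P$) and $\chi_{\{|x-y|>|z|\}}$ (for $Q$), exactly as for $J_{13}$. For $P$, estimate \eqref{A6} with H\"older exponent $\varrho_7$ supplies the prefactor $|z|^{\varrho_7}(t-s)^{-1-\frac{\varrho_7}{2}}$; absorbing the Gaussian via \eqref{A7}/Young and integrating in time with H\"older gives the stated bound, the time integrability requiring $\varrho_7<1-\frac{2p(\delta+1)+\delta\eta}{p\eta}$. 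For $Q$ one uses \eqref{A6} and then the geometric fact, as in the $J_{13}$ argument, that with $\theta$ between $x$ and $x+z$ the quantity $|\theta-y|$ is, on $\{|x-y|>|z|\}$, comparable to a re‑centred distance $|x\pm z-y|$; after the same interpolation/re‑centring trick this restores the convolution structure and gives $Q$ a bound of the same form as $P$. Finally one sets $J_2v(t,y):=0$ for $y\in\R\setminus[0,1]$, as in the statement.

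The main obstacle is not any single estimate — each is a routine convolution/H\"older computation — but the bookkeeping of exponents: checking that the single hypothesis $\eta>\frac{2p(\delta+1)}{p-\delta}$ simultaneously guarantees convergence of the time integral in Step 1, nonemptiness of the admissible interval for $\varrho$ (equivalently the stated range of $\vartheta$) in Step 2, and the stated range of $\varrho_7$ in Step 3, while reproducing exactly the powers of $t$ in the statement; and, within Step 3, carrying out the geometric re‑centring of $Q$ together with the boundary cutoffs without losing a power of $|z|$.
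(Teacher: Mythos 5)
Your proposal is correct and follows essentially the same route as the paper's proof: the pointwise bound via Minkowski, \eqref{A2}, \eqref{A7}, Young and H\"older; the time increment via the $M+N$ split with $\frac{\partial^2G}{\partial y\partial t}$ and \eqref{A4}; and the spatial increment via the $P+Q$ split with \eqref{A6} and the re-centring of $|\theta-y|$ on $\{|x-y|>|z|\}$. The exponent bookkeeping you describe (integrability under $\eta>\frac{2p(\delta+1)}{p-\delta}$ and the resulting ranges for $\vartheta$ and $\varrho_7$) matches the paper's computation exactly.
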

\begin{proof}
	Using Minkowski's inequality, the estimates \eqref{A2}, \eqref{A7}, and Young's and H\"older's  inequalities, we obtain  
	\begin{align*}
		\|J_2v(t,\cdot)\|_{\L^p}&= \bigg(\int_{0}^{1}\bigg|\int_{0}^{t}\int_{0}^{1}\frac{\partial G}{\partial y}(t-s,x,y)v^{\delta+1}\d y \d s\bigg|^p\d x\bigg)^{\frac{1}p} \\& \leq 
		C\int_{0}^{t}(t-s)^{-1}\big\|e^{-\frac{|\cdot|^2}{\ell_2(t-s)}}*|v^{\delta+1}(s,\cdot)|\big\|_{\L^p}\d s \\&\leq C\int_{0}^{t}(t-s)^{-\frac{\delta}{2p}-\frac{1}{2}}\|v(s)\|_{\L^p}^{\delta+1}\d s \\&\leq C\bigg(\int_{0}^{t}(t-s)^{-\frac{\eta}{\eta-\delta-1}\big(\frac{\delta}{2p}+\frac{1}{2}\big)}\d s\bigg)^{1-\frac{\delta+1}{\eta}}\bigg(\int_{0}^{t}\|v(s)\|_{\L^p}^\eta\d s\bigg)^{\frac{\delta+1}{\eta}} \\&\leq Ct^{-\frac{\delta}{2p}+\frac{1}{2}-\frac{\delta+1}{\eta}}\bigg(\int_{0}^{t}\|v(s)\|_{\L^p}^\eta\d s\bigg)^{\frac{\delta+1}{\eta}},
	\end{align*}for $0\leq t\leq T$, where $p\geq \delta+1$ and $\eta>\frac{2p(\delta+1)}{p-\delta}$. Let us now consider 
	\begin{align*}
		\|J_2v(t,\cdot)-J_2v(s,\cdot)\|_{\L^p}\leq M+N, 
	\end{align*}where \begin{align*}
		M=\bigg\|\int_{s}^{t}\int_{0}^{1}\frac{\partial G}{\partial y}(t-r,\cdot,y)v^{\delta+1}(r,\cdot)\d y\d r\bigg\|_{\L^p},
	\end{align*}and \begin{align*}
		N= \bigg\|\int_{0}^{s}\int_{0}^{1}\bigg(\frac{\partial G}{\partial y}(t-r,\cdot,y)-\frac{\partial G}{\partial y}(s-r,\cdot,y)\bigg)v^{\delta+1}(r,\cdot)\d y\d r\bigg\|_{\L^p}.
	\end{align*}Using the part (1) of the Lemma, we estimate the term $M$ as
	\begin{align*}
		M\leq C\int_{s}^{t}(t-r)^{-\frac{\delta}{2p}-\frac{1}{2}}\|v(r)\|_{\L^p}^{\delta+1}\d r 
		 , \text{ for } p\geq \delta+1.
	\end{align*} For the term $N,$ using the estimates \eqref{A4}, \eqref{A7}, Young's and H\"older's inequalities, for $\vartheta+\varrho=1,$ we get
	\begin{align*}
		N&\leq \int_{0}^{s}\bigg\|\int_{0}^{1}\int_s^t\frac{\partial ^2G}{\partial t\partial y}(\theta-r,\cdot,y)v^{\delta+1}(r,\cdot)\d\theta\d y\bigg\|_{\L^p}\d r\\&\leq C\int_{0}^{s}\bigg\|\int_{0}^{1}\bigg(\int_{s}^{t}(\theta-r)^{-2}e^{-\frac{|\cdot|^2}{\ell_4(\theta-r)}}\d \theta\bigg)v^{\delta+1}(r,\cdot)\d y\bigg\|_{\L^p}\d r \\& \leq C\int_{0}^{s}\int_{s}^{t}(\theta-r)^{-2}\big\|e^{-\frac{|\cdot|^2}{\ell_4(\theta-r)}}*|v^{\delta+1}(r,\cdot)|\big\|_{\L^p}\d \theta \d r \\&  \leq C\int_{0}^{s}\int_{s}^{t}(\theta-r)^{-\frac{\delta}{2p}-\frac{3}{2}}\|v(r)\|_{\L^p}^{\delta+1}\d \theta \d r \\& \leq C(t-s)^{\vartheta}\int_{0}^{s}\bigg(\int_{s}^{t}(\theta-r)^{-\frac{1}{\varrho}\big(\frac{\delta}{2p}+\frac{3}{2}\big)}\d \theta\bigg)^{\varrho}\|v(r)\|_{\L^p}^{\delta+1}\d r \\& \leq 	C(t-s)^{\vartheta}\int_{0}^{s}(s-r)^{-\frac{\delta}{2p}-\frac{3}{2}+\varrho}\|v(r)\|_{\L^p}^{\delta+1}	\d r \\& \leq C(t-s)^{\vartheta}\bigg(\int_{0}^{s}(s-r)^{\frac{\eta}{\eta-\delta-1}\big(-\frac{\delta}{2p}-\frac{3}{2}+\varrho\big)}\d r\bigg)^{1-\frac{\delta+1}{\eta}}\bigg(\int_{0}^{s}\|v(r)\|_{\L^p}^{\eta}\d r\bigg)^{\frac{\delta+1}{\eta}} \\&  \leq C(t-s)^{\vartheta}\bigg(\int_{0}^{s}\|v(r)\|_{\L^p}^{\eta}\d r\bigg)^{\frac{\delta+1}{\eta}}
		 \\&  \leq C(t-s)^{\vartheta}\bigg(\int_{0}^{t}\|v(s)\|_{\L^p}^{\eta}\d s\bigg)^{\frac{\delta+1}{\eta}},
	\end{align*}for all $0\leq s\leq t\leq T$, where $0<\vartheta<\frac{1}{2}\big(1-\frac{2p(\delta+1)+\delta\eta}{p\eta}\big)$. Next, we consider 
	\begin{align*}
		\|J_2v(t,\cdot)-J_2v(t,\cdot+z)\|_{\L^p}\leq P+Q,
	\end{align*}where \begin{align*}
		P &= \int_{0}^{t}\bigg\{\int_{0}^{1}\chi_{\{x+z\in[0,1]\}}\bigg|\int_{0}^{1}\chi_{\{|x-y|\leq |z|\}}\bigg(\frac{\partial G}{\partial y}(t-s,x+z,y)-\frac{\partial G}{\partial y}(t-s,x,y)\bigg)\\& \qquad \times v^{\delta+1}(s,y)\d y\bigg|^p\d x\bigg\}^{\frac{1}p}\d s,
	\end{align*}and 
	\begin{align*}
		Q&= \int_{0}^{t}\bigg\{\int_{0}^{1}\chi_{\{x+z\in[0,1]\}}\bigg|\int_{0}^{1}\chi_{\{|x-y|> |z|\}}\bigg(\frac{\partial G}{\partial y}(t-s,x+z,y)-\frac{\partial G}{\partial y}(t-s,x,y)\bigg)\\& \qquad \times v^{\delta+1}(s,y)\d y\bigg|^p\d x\bigg\}^{\frac{1}p}\d s.
	\end{align*}Using Minkowski's inequality, the estimates \eqref{A6}, \eqref{A7}, Young's and H\"older's inequalities, one can estimate $P$ as
	\begin{align*}
		P&\leq C|z|^{\varrho_7}\int_{0}^{t}(t-s)^{-\frac{\delta}{2p}-\frac{1}{2}-\frac{\varrho_7}{2}}\|v(s)\|_{\L^p}^{\delta+1}\d s \\&\leq C|z|^{\varrho_7}t^{-\frac{\delta}{2p}-\frac{\varrho_7}{2}+\frac{1}{2}-\frac{\delta+1}{\eta}}\bigg(\int_{0}^{t}\|v(s)\|_{\L^p}^{\eta}\d s\bigg)^{\frac{\delta+1}{\eta}},
	\end{align*}for all $t\in[0,T], z\in\R$, where $0<\varrho_7<1-\frac{2p(\delta+1)+\delta\eta}{p\eta}$. 
	Using Minkowski's inequality, the estimates \eqref{A6}, \eqref{A7} and Young's inequality, we estimate $Q$ as
	\begin{align*}
		Q &\leq C|z|^{\varrho_7}\int_{0}^{t}(t-s)^{-\frac{\delta}{2p}-\frac{1}{2}-\frac{\varrho_7}{2}}\|v(s)\|_{\L^p}^{\delta+1}\d s\\&\leq C|z|^{\varrho_7}t^{-\frac{\delta}{2p}-\frac{\varrho_7}{2}+\frac{1}{2}-\frac{\delta+1}{\eta}}\bigg(\int_{0}^{t}\|v(s)\|_{\L^p}^{\eta}\d s\bigg)^{\frac{\delta+1}{\eta}},
	\end{align*}for all $t\in[0,T],\; z\in\R$, where $0<\varrho_7<1-\frac{2p(\delta+1)+\delta\eta}{p\eta}$.
	Hence the proof is completed.
\end{proof}
\begin{lemma}\label{lemma3.3}
	The following estimates hold for all $t\in[0,T]$: 
	\begin{itemize}
		\item[(i)] for $p\geq \delta+1$, we have 
		\begin{align}\label{3.01}
			\|(J_{11}v)(t)\|_{\frac{1+\e}{p},p} \leq Ct^{-\frac{\delta}{2p}+\frac{1}{2}-\frac{\delta+1}{\eta}}\bigg(\int_{0}^{t}\|v(s)\|_{\L^p}^\eta\d s\bigg)^{\frac{\delta+1}{\eta}},
		\end{align}where $\eta>\frac{2p(\delta+1)}{p-\delta}$,
		\item[(ii)] for $p\geq 1$, we have 
		\begin{align}\label{3.02}
			\|(J_{12}v)(t)\|_{\frac{1+\e}{p},p} \leq Ct^{\frac{1}{2}-\frac{1}{\eta_4}}\bigg(\int_{0}^{t}\|v(s)\|_{\L^p}^{\eta_4}\d s\bigg)^{\frac{1}{\eta_4}},
		\end{align}where $\eta_4>2$,
		\item[(iii)] for $p\geq 2\delta+1$, we have 
		\begin{align}\label{3.04}
			\|(J_{13}v)(t)\|_{\frac{1+\e}{p},p} \leq Ct^{\frac{1}{2}-\frac{\delta}{p}-\frac{2\delta+1}{\eta_5}}\bigg(\int_{0}^{t}\|v(s)\|_{\L^p}^{\eta_5}\d s\bigg)^\frac{2\delta+1}{\eta_5},
		\end{align}where $\eta_5>\frac{2p(2\delta+1)}{p-2\delta}$,
		\item[(iv)] for $p> 2\delta+1 $, we have 
		\begin{align}\label{lemma3.3.4}
			\|(J_2v)(t)\|_{\frac{1+\e}{p},2p}  \leq Ct^{\frac{1}{4}-\frac{2\delta-1}{4p}-\frac{\delta+1}{\eta_6}}\bigg(\int_{0}^{t}\|v(s)\|_{\L^p}^{\eta_6}\d s\bigg)^{\frac{\delta+1}{\eta_6}},
		\end{align}where $\eta_6>\frac{4p(\delta+1)}{p-2\delta-1}$.
	\end{itemize}
\end{lemma}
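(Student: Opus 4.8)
The plan is to deduce all four bounds from the spatial-translation estimates already obtained in part (3) of Lemma \ref{lemma3.1} and of Lemma \ref{lemma3.2}. The key elementary observation is that, for any $f\colon[0,1]\to\R$ extended by zero outside $[0,1]$ (in agreement with the conventions $J_{1i}v(t,y)=0$ and $J_2v(t,y)=0$ for $y\notin[0,1]$), the Gagliardo seminorm is controlled by an average of translates: substituting $y=x+z$ and using Tonelli's theorem,
\begin{align*}
	\|f\|_{\frac{1+\e}{p},p}^p=\int_0^1\int_0^1\frac{|f(x)-f(y)|^p}{|x-y|^{2+\e}}\d x\d y\leq\int_{-1}^{1}\frac{1}{|z|^{2+\e}}\big\|f(\cdot)-f(\cdot+z)\big\|_{\L^p}^p\d z.
\end{align*}
Taking $f=J_{1i}v(t,\cdot)$ for $i=1,2,3$ and inserting the translation bound of Lemma \ref{lemma3.1}(3), the $z$-integrand becomes $|z|^{p\varrho_j-2-\e}$ times a factor depending only on $t$ and on $\int_0^t\|v(s)\|_{\L^p}^{\eta}\d s$; the remaining integral $\int_{-1}^{1}|z|^{p\varrho_j-2-\e}\d z$ is finite precisely when $\varrho_j>\frac{1+\e}{p}$, which is consistent with the standing assumption $p>1+\e$.

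It then suffices to pick $\varrho_j$ inside the window permitted by Lemma \ref{lemma3.1}(3) so that the resulting power of $t$ matches the claim, and for $J_{11},J_{12},J_{13}$ the correct choice is the borderline value $\varrho_j=1$. Indeed, the upper endpoints for $\varrho_4,\varrho_5,\varrho_6$ in Lemma \ref{lemma3.1}(3) equal $2\big(1-\tfrac{\delta+1}{\eta}-\tfrac{\delta}{2p}\big)$, $2\big(1-\tfrac{1}{\eta_4}\big)$ and $2\big(1-\tfrac{2\delta+1}{\eta_5}-\tfrac{\delta}{p}\big)$ (with $\eta=\eta_1$, $\eta_4=\eta_2$, $\eta_5=\eta_3$ in the notation of Lemma \ref{lemma3.1}), and each of these is strictly larger than $1$ exactly under the lower bound imposed on $\eta,\eta_4,\eta_5$ in items (i), (ii), (iii); moreover $1>\frac{1+\e}{p}$ since $p>1+\e$, so the $z$-integral converges. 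With $\varrho_j=1$ the estimates of Lemma \ref{lemma3.1}(3) carry exactly the $t$-powers $t^{-\frac{\delta}{2p}+\frac12-\frac{\delta+1}{\eta}}$, $t^{\frac12-\frac{1}{\eta_4}}$ and $t^{\frac12-\frac{\delta}{p}-\frac{2\delta+1}{\eta_5}}$, and taking a $p$-th root yields \eqref{3.01}, \eqref{3.02} and \eqref{3.04}.

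For \eqref{lemma3.3.4} there is an extra wrinkle: the target integrability index is $2p$, whereas $v$ is only assumed to lie in $\L^p$, so Lemma \ref{lemma3.2}(3) cannot be quoted directly. Instead one rereads the computation in the proof of Lemma \ref{lemma3.2}, measuring $J_2v$ in $\L^{2p}$ while still estimating $v^{\delta+1}$ in $\L^{p/(\delta+1)}$; Young's inequality for convolutions now uses the exponent $\frac1r=1+\frac1{2p}-\frac{\delta+1}{p}=1-\frac{2\delta+1}{2p}$, and \eqref{A2}, \eqref{A6}, \eqref{A7} together with H\"older's inequality in time give
\begin{align*}
	\|J_2v(t,\cdot)-J_2v(t,\cdot+z)\|_{\L^{2p}}\leq C|z|^{\varrho_7}\,t^{\frac12-\frac{\varrho_7}{2}-\frac{2\delta+1}{4p}-\frac{\delta+1}{\eta_6}}\bigg(\int_0^t\|v(s)\|_{\L^p}^{\eta_6}\d s\bigg)^{\frac{\delta+1}{\eta_6}}
\end{align*}
provided $\eta_6>\frac{4p(\delta+1)}{p-2\delta-1}$. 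Feeding this into the translate-average inequality with integrability $2p$ and choosing $\varrho_7=\frac12-\frac1p$ --- which is positive and admissible precisely because $p>2\delta+1$ --- makes $\int_{-1}^{1}|z|^{2p\varrho_7-2-\e}\d z$ finite and produces the $t$-power $\frac14-\frac{2\delta-1}{4p}-\frac{\delta+1}{\eta_6}$; a final $2p$-th root gives \eqref{lemma3.3.4}. The bulk of the argument --- the convolution and H\"older bookkeeping --- is routine; the one genuinely delicate point is this last item, where the two distinct Lebesgue exponents ($\L^{2p}$ for the spatial regularity of $J_2v$ and $\L^p$ for the time integrability of $v$) must be tracked simultaneously through Young's inequality, and one has to verify that the admissible interval for $\varrho_7$ is nonempty, which is exactly what forces the hypotheses $p>2\delta+1$ and $\eta_6>\frac{4p(\delta+1)}{p-2\delta-1}$.
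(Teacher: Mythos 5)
For items (i)--(iii) your reduction works: after the substitution $y=x+z$ only the portion of the translate with $x,x+z\in[0,1]$ enters the Gagliardo seminorm, and that is exactly the quantity controlled in part (3) of Lemma \ref{lemma3.1}; the choice $\varrho_j=1$ lies strictly inside the admissible open windows precisely under the strengthened conditions on $\eta,\eta_4,\eta_5$ assumed in (i)--(iii); the $z$-integral $\int_{-1}^{1}|z|^{p-2-\e}\d z$ converges because $p>1+\e$; and the $t$-powers come out right. In substance this is the same argument as the paper's, which applies \eqref{A5} with $\vartheta=1$ directly inside the seminorm and then runs the identical Minkowski--Young--H\"older chain; you simply recycle the already-proved translation estimates instead of re-deriving them, which is legitimate.

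The gap is in item (iv). Matching the stated $t$-exponent forces $\varrho_7=\frac{1}{2}-\frac{1}{p}$, and then the $z$-integral in your translate-average inequality is $\int_{-1}^{1}|z|^{2p\varrho_7-2-\e}\d z=\int_{-1}^{1}|z|^{p-4-\e}\d z$, which converges only if $p>3+\e$, i.e. $\e<p-3$. Your justification ``positive and admissible precisely because $p>2\delta+1$'' only yields $\varrho_7>0$ and $p>3$; it does not yield $p>3+\e$, and under the standing assumption $p>1+\e$ alone the integral can diverge (take $\e$ close to $p-1$). The paper's own proof of (iv) avoids this by using \eqref{A6} with $\vartheta=\frac12$, so the spatial weight becomes $|x-y|^{p-2-\e}$ and only $p>1+\e$ is needed; the price is that the computation then produces the $t$-power $\frac14-\frac{2\delta+1}{4p}-\frac{\delta+1}{\eta_6}$ rather than the $\frac14-\frac{2\delta-1}{4p}-\frac{\delta+1}{\eta_6}$ displayed in \eqref{lemma3.3.4} (the latter appears to be a slip in the statement). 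So your route proves the estimate exactly as stated, but only under the extra hypothesis $\e<p-3$, which you should state explicitly; it does hold wherever the lemma is used later in the paper, since there one imposes $0<\e<\frac{p}{2}-3$. The rest of your item (iv) --- measuring $J_2v$ in $\L^{2p}$ while keeping $v^{\delta+1}$ in $\L^{p/(\delta+1)}$, the Young exponent $\frac{2p}{2p-2\delta-1}$, and the time-H\"older step under $\eta_6>\frac{4p(\delta+1)}{p-2\delta-1}$ --- is correct.
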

\begin{proof} Let $p\geq  \delta+1$. 
	Using the estimate \eqref{A5} with $\vartheta=1$, we get 
	\begin{align*}
		\|(J_{11}v)(t)\|_{\frac{1+\e}{p},p}^p &\leq K^p \int_{0}^{1}\int_{0}^{1}|x-y|^{-2-\e}\bigg|\int_{0}^{t}\int_{0}^{1}|x-y|(t-s)^{-1}\\&\qquad\times\max \bigg\{e^{-\frac{|x-y|^2}{\ell_5(t-s)}},e^{-\frac{|y-z|^2}{\ell_5(t-s)}}\bigg\}|v^{\delta+1}(s,y)|\d z\d s\bigg|^p\d x \d y \\&
		\leq C\int_{0}^{1}\bigg|\int_{0}^{t}\int_{0}^{1}(t-s)^{-1}e^{-\frac{|\cdot|^2}{\ell_5(t-s)}}|v^{\delta+1}(s,\cdot)|\d z \d s\bigg|^p\d x,
	\end{align*} for $p>1+\e$. Using  Minkowski's, Young's and H\"older's inequalities,  and the estimate \eqref{A7}, we find
	\begin{align*}
		\|(J_{11}v)(t)\|_{\frac{1+\e}{p},p} & \leq C\int_{0}^{t}(t-s)^{-1} \big\|e^{-\frac{|\cdot|^2}{\ell_5(t-s)}}*|v^{\delta+1}(s,\cdot)|\big\|_{\L^p}\d s
		\\& \leq C\int_{0}^{t}(t-s)^{-1}\big\|e^{-\frac{|\cdot|^2}{\ell_5(t-s)}}\big\|_{\L^{\frac{p}{p-\delta}}}\|v(s)\|_{\L^p}^{\delta+1}\d s \\& \leq C\int_{0}^{t}(t-s)^{-1+\frac{p-\delta}{2p}}\|v(s)\|_{\L^p}^{\delta+1}\d s \\&\leq 
		Ct^{-\frac{\delta}{2p}+\frac{1}{2}-\frac{\delta+1}{\eta}}\bigg(\int_{0}^{t}\|v(s)\|_{\L^p}^\eta\d s\bigg)^{\frac{\delta+1}{\eta}},
	\end{align*} for $p\geq \delta+1$, where $\eta>\frac{2p(\delta+1)}{p-\delta}$ and \eqref{3.01} follows.

	Similar calculations as we performed for \eqref{3.01} helps us to obtain \eqref{3.02} and \eqref{3.04} for $p\geq 1$ and $p\geq 2\delta+1,$ respectively.
	
	Let us now establish the final estimate \eqref{lemma3.3.4}. Let $p>2\delta+1$. Using the estimates \eqref{A6} and \eqref{A7} with $\vartheta=\frac{1}{2}$, we obtain 
	\begin{align*}
		\|(J_2v)(t)\|_{\frac{1+\e}{p},2p}^{2p} &\leq K^{2p}\int_{0}^{1}\int_{0}^{1}|x-y|^{-2-\e}\bigg|\int_{0}^{t}\int_{0}^{1}|x-y|^{\frac{1}{2}}(t-s)^{-\frac{5}{4}}\\&\qquad\times\max \bigg\{e^{-\frac{|x-y|^2}{\ell_6(t-s)}},e^{-\frac{|y-z|^2}{\ell_6(t-s)}}\bigg\}|v^{\delta+1}(s,y)|\d z\d s\bigg|^{2p}\d x \d y \\& 
		\leq C\int_{0}^{1}\bigg|\int_{0}^{t}\int_{0}^{1}(t-s)^{-\frac{5}{4}}e^{-\frac{|\cdot|^2}{\ell_6(t-s)}}|v^{\delta+1}(s,\cdot)|\d z\d s\bigg|^{2p}\d x,
	\end{align*}for $p>1+\e$. Applying Minkowski's, Young's  and H\"older's inequalities, and  the estimate \eqref{A7}, we find 
	\begin{align*}
		\|(J_2v)(t)\|_{\frac{1+\e}{p},2p} & \leq C\int_{0}^{t}(t-s)^{-\frac{5}{4}}\big\|e^{-\frac{|\cdot|^2}{\ell_6(t-s)}}*|v^{\delta+1}(s,\cdot)\big\|_{\L^{2p}}\d s
		\\&\leq C \int_{0}^{t}(t-s)^{-\frac{5}{4}}\big\|e^{-\frac{|\cdot|^2}{\ell_6(t-s)}}\big\|_{\L^{\frac{2p}{2p-2\delta-1}}}\|v(s)\|_{\L^p}^{\delta+1}\d s \\& \leq C\int_{0}^{t}(t-s)^{-\frac{5}{4}+\frac{2p-2\delta-1}{4p}}\|v(s)\|_{\L^p}^{\delta+1}\d s 
		\\&\leq Ct^{\frac{1}{4}-\frac{2\delta-1}{4p}-\frac{\delta+1}{\eta_6}}\bigg(\int_{0}^{t}\|v(s)\|_{\L^p}^{\eta_6}\d s\bigg)^{\frac{\delta+1}{\eta_6}},
	\end{align*}for  $p>2\delta+1$  where $\eta_6>\frac{4p(\delta+1)}{p-2\delta-1}$ and the proof is completed. 
\end{proof}

As a consequence of above Lemma \ref{lemma3.3}, for all $p\geq   2\delta+1$ and $\eta>\frac{2p(2\delta+1)}{p-2\delta}$, the operator $J_1:\L^\eta(0,T; \L^p(\1)) \to \C([0,T];\W^{\frac{1+\e}{p},p}(\1))$ is bounded and for all $p>2\delta+1$ and $\eta>\frac{4p(\delta+1)}{p-2\delta-1}$,  the operator $J_2:\L^\eta(0,T; \L^p(\1)) \to \C([0,T];\W^{\frac{1+\e}{p},p}(\1))$ is bounded.

Let $\phi=\{\phi(t,x):(t,x)\in[0,T]\times[0,1]\}$ be an $\R$-valued adapted process such that 
\begin{align}
	\E\bigg[\int_{0}^{T}\int_{0}^{1}|\phi(t,x)|^p\d x\d t\bigg]<\infty,
\end{align} for some $p\geq 1$. Define 
\begin{align*}
	(J_3\phi)(t,x) =\int_{0}^{t}\int_{0}^{1}G(t-s,x,y)\phi(s,y)\W(\d s,\d y),
\end{align*}for all $(t,x)\in[0,T]\times[0,1]$.
Let us now  provide  a result (see Lemma 2.2, \cite{LZN}) based on Burkholder-Davis-Gundy  inequality (cf. \cite{IY}), which helps us to estimate the operator $J_3$.
\begin{lemma}[Lemma 2.2, \cite{LZN}]\label{lemma3.5}
	For any $p\geq2$ and $\vartheta \in(0,1)$, we have 
	\begin{align}\label{lemma3.5.1}
		\E\left[\|(J_3\phi)(t)\|_{\L^p}^p\right] \leq C\int_{0}^{T}(t-s)^{-\frac{1}{2}-\vartheta}\E \big[\|\phi(s)\|_{\L^p}^p\big] \d s,
	\end{align}for every $0\leq t\leq T$. If $p\geq 2$ and $q >4$,  then we have 
	\begin{align}\label{lemma3.5.2}
		\E\left[\sup_{t\in[0,T]}\|(J_3\phi)(t)\|_{\L^p}^q\right] \leq C\int_{0}^{T} \E\big[\|\phi(s)\|_{\L^p}^q\big]\d s.
	\end{align}Finally if $q\geq p>6$ and $0<\e'<\e<\frac{q(p-2)}{2p}-2<\frac{p}{2}-3,$ it holds 
	\begin{align}\label{lemma3.5.3}
		\E\left[\sup_{t\in[0,T]}\|(J_3\phi)(t)\|_{\frac{1+\e}{p},p}^q\right] \leq CT^{q\left(\frac{p-2}{4p}\right)-1-\frac{\e'}{2}}\E\left[\int_{0}^{T}\|\phi(s)\|_{\L^p}^q\d s\right].
	\end{align}
\end{lemma}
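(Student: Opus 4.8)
The plan is to treat each of the three estimates by stochastic integration against the driving noise (in the sense of Walsh), using the Burkholder--Davis--Gundy (BDG) inequality together with the Gaussian bounds \eqref{A1}, \eqref{A5}--\eqref{A7}. For \eqref{lemma3.5.1} I would fix $x\in[0,1]$ and apply BDG to the real martingale $t\mapsto(J_3\phi)(t,x)$, obtaining $\E|(J_3\phi)(t,x)|^p\leq C\,\E\big(\int_0^t\int_0^1 G(t-s,x,y)^2\phi(s,y)^2\d y\d s\big)^{p/2}$. Writing $G^2=G^{2(1-2/p)}\cdot G^{4/p}$ and applying Hölder's inequality with exponents $\frac{p}{p-2}$ and $\frac p2$ (an interpolation on the kernel weight) bounds the right-hand side by $C\big(\int_0^t\int_0^1 G^2\big)^{(p-2)/2}\int_0^t\int_0^1 G(t-s,x,y)^2\E[\phi(s,y)^p]\d y\d s$; by \eqref{A1} and \eqref{A7} with $p=1$ one has $\int_0^1 G(t-s,x,y)^2\d y\leq C(t-s)^{-1/2}$, so the first factor is a constant, and integrating in $x$ (using the same bound with the roles of $x,y$ exchanged) gives $\E\|(J_3\phi)(t)\|_{\L^p}^p\leq C\int_0^t(t-s)^{-1/2}\E\|\phi(s)\|_{\L^p}^p\d s$, which implies \eqref{lemma3.5.1} since $t-s\leq T$.

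For \eqref{lemma3.5.2} I would use the stochastic factorization of Da Prato--Kwapie\'n--Zabczyk. Fix $\alpha\in(0,1/2)$, set $Y_\alpha(s,z):=\int_0^s(s-r)^{-\alpha}\int_0^1 G(s-r,z,y)\phi(r,y)\W(\d r,\d y)$, and use the Chapman--Kolmogorov identity for $G$ together with $\int_r^t(t-s)^{\alpha-1}(s-r)^{-\alpha}\d s=\frac{\pi}{\sin\pi\alpha}$ to write, via stochastic Fubini, $(J_3\phi)(t,x)=\frac{\sin\pi\alpha}{\pi}\int_0^t(t-s)^{\alpha-1}\int_0^1 G(t-s,x,z)Y_\alpha(s,z)\d z\d s$. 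Since the convolution $Y\mapsto\int_0^1 G(t-s,\cdot,z)Y(z)\d z$ is bounded on $\L^p(\1)$ uniformly for $t-s\leq T$ by \eqref{A1}, \eqref{A7}, Hölder in time gives $\sup_{t\leq T}\|(J_3\phi)(t)\|_{\L^p}\leq C\big(\int_0^T s^{(\alpha-1)q/(q-1)}\d s\big)^{(q-1)/q}\big(\int_0^T\|Y_\alpha(s)\|_{\L^p}^q\d s\big)^{1/q}$, with the first integral finite provided $\alpha>1/q$. Taking $q$-th moments reduces matters to bounding $\E\int_0^T\|Y_\alpha(s)\|_{\L^p}^q\d s$; applying the scheme of \eqref{lemma3.5.1} (Minkowski to move the $\L^p(\d z)$-norm and the expectation inside, BDG, interpolation on the kernel weight, \eqref{A1}, \eqref{A7}) bounds this by $C\int_0^T\int_0^s(s-r)^{-2\alpha-1/2}\E\|\phi(r)\|_{\L^p}^q\d r\d s\leq C\int_0^T\E\|\phi(s)\|_{\L^p}^q\d s$, the time integral being finite exactly when $2\alpha+\frac12<1$, i.e. $\alpha<\frac14$. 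The window $\alpha\in(\frac1q,\frac14)$ is non-empty precisely for $q>4$, which is the stated restriction.

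For \eqref{lemma3.5.3} I would keep the same factorization identity but now control the $\W^{\frac{1+\e}{p},p}(\1)$-seminorm of $(J_3\phi)(t)$. The deterministic part $Y\mapsto\int_0^1 G(t-s,\cdot,z)Y(z)\d z$ must then be estimated in that fractional Sobolev norm using the spatial Hölder bounds \eqref{A5}--\eqref{A6} (with a suitable exponent $\vartheta$) together with \eqref{A7}, exactly as in the proofs of Lemmas \ref{lemma3.1}--\ref{lemma3.3}; the extra factor $|x-y|^{\vartheta}$ there costs an additional power $(t-s)^{-\vartheta/2}$ in the time integral. Balancing this against $\alpha<\frac14$ (needed for $Y_\alpha$ to have a finite $q$-th moment), $\alpha>\frac1q$ (needed for the time Hölder step), the integrability exponent $q$, and the Sobolev index $\frac{1+\e}{p}$ is what forces the ranges $q\geq p>6$ and the chain $0<\e'<\e<\frac{q(p-2)}{2p}-2<\frac p2-3$; the explicit power of $T$ in \eqref{lemma3.5.3} is what survives the time integrations after all the Hölder splittings.

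I expect the first estimate to be essentially immediate and the second to be the textbook factorization argument; the genuinely delicate point is the third, where one needs \emph{simultaneously} a supremum over $t$ and spatial fractional-Sobolev regularity of a stochastic convolution. The obstacle is not any single inequality but the joint bookkeeping: the factorization exponent $\alpha$ must lie in a window compatible at once with the time singularity $(s-r)^{-2\alpha}$ in the quadratic variation of $Y_\alpha$, with the moment exponent $q$, and with the additional spatial smoothing $|x-y|^{\vartheta}$ needed to generate the $\W^{\frac{1+\e}{p},p}$ seminorm, and the narrow admissible parameter ranges are precisely the price of keeping all of these compatible.
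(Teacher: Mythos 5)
You should first note that the paper itself offers no proof of this statement: it is imported verbatim as Lemma 2.2 of \cite{LZN} and used as a black box, so the only meaningful comparison is with that reference, which argues along the same lines you propose (Burkholder--Davis--Gundy plus the kernel bounds, with a factorization-type device for the suprema in time). Your proof of \eqref{lemma3.5.1} is correct and complete: pointwise BDG, the interpolation $\bigl(\int\phi^2\,\d\mu\bigr)^{p/2}\le|\mu|^{(p-2)/2}\int|\phi|^p\,\d\mu$ for the measure $\mu=G^2\,\d y\,\d s$, and $\int_0^1G(t-s,x,y)^2\,\d y\le C(t-s)^{-1/2}$ even give the sharper kernel $(t-s)^{-1/2}$, from which the stated bound follows. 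For \eqref{lemma3.5.2} the factorization identity and the observation that the window $\alpha\in(\tfrac1q,\tfrac14)$ is nonempty precisely when $q>4$ are exactly right; the one step you should not wave at is the passage to $\E\|Y_\alpha(s)\|_{\L^p}^q\le C\int_0^s(s-r)^{-2\alpha-\frac12}\E\|\phi(r)\|_{\L^p}^q\,\d r$. If you apply BDG pointwise in $z$ and only afterwards take the $\L^p(\d z)$-norm, the measure trick produces $\E\|\phi(r)\|_{\L^q}^q$ (or forces $q\ge p$), not $\E\|\phi(r)\|_{\L^p}^q$. The clean route is the $\L^p$-valued BDG inequality (cf. \cite{IY}) applied first, then Minkowski in $r$ and Young's convolution inequality in $y$, so that $G^2$ enters only through its $\L^1_y$-mass $C(s-r)^{-1/2}$ while the $\L^{p/2}$-norm falls on $\phi^2$; the scalar interpolation in $r$ then gives the stated bound. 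This is a fixable ordering issue, not a wrong idea.

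The genuine gap is \eqref{lemma3.5.3}, which you yourself single out as the delicate estimate and then do not prove. What you give is a plan: you assert that the $\W^{\frac{1+\e}{p},p}$-seminorm of the deterministic factor costs an extra power of $(t-s)$ and that ``balancing'' the constraints produces the ranges $q\ge p>6$, the chain $0<\e'<\e<\frac{q(p-2)}{2p}-2<\frac{p}{2}-3$ and the prefactor $T^{q(\frac{p-2}{4p})-1-\frac{\e'}{2}}$, but none of this is derived: you never compute the smoothing rate of $Y\mapsto\int_0^1G(t-s,\cdot,z)Y(z)\,\d z$ from $\L^p(\1)$ into $\W^{\frac{1+\e}{p},p}(\1)$ via \eqref{A5}--\eqref{A6}, never explain where $\e'$ enters (it can only come from the H\"older-in-time integral, i.e.\ from tying the factorization exponent $\alpha$ to $\e'$ and $q$), never check that the required $\alpha$ is compatible with the constraint $\alpha<\tfrac14$ coming from the moment bound on $Y_\alpha$, and never verify that the compatibility condition is exactly $\e<\frac{q(p-2)}{2p}-2$. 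Since the entire content of the third estimate is this quantitative bookkeeping, the proposal as written establishes \eqref{lemma3.5.1} and (modulo the ordering point above) \eqref{lemma3.5.2}, but not \eqref{lemma3.5.3}.
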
 Let us recall a result from \cite{IG}, which helps us to obtain the uniform tightness for the operators $J_i$: 
\begin{lemma}[Lemma 3.3, \cite{IG}]\label{lemma3.6}
	Let $\varsigma_n(t,y)$ be a sequence of random fields on $[0,T]\times[0,1]$ such that $
	\sup\limits_{0\leq t\leq T}\|\varsigma_n(t,\cdot)\|_{\L^q}\leq \xi_n$, for $q\in[1,p],$ where $\xi_n$ is a finite random variable for every $n$. Assume that the sequence $\{\xi_n\}$ is bounded in probability, that is, 
	\begin{align*}
		\lim_{C\to\infty}\sup_{n}\P(\xi_n\geq C)=0.
	\end{align*} {Under $\eqref{A1}-\eqref{A4}$,}  the sequences $J_i(\varsigma_n)$, for $i=\{1,2\}$, are uniformly tight in $\C([0,T];\L^p(\1)),$ for $p\geq 1$.
\end{lemma}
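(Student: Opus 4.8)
The final statement coincides with the tightness criterion of \cite[Lemma~3.3]{IG}, now applied to the operators $J_1,J_2$ that carry the higher--order nonlinearities of \eqref{1.01}. The plan is to reduce it to a purely deterministic relative--compactness statement and then to invoke the vector--valued Arzel\`a--Ascoli theorem together with the a priori bounds collected in Lemmas~\ref{lemma3.1}--\ref{lemma3.3}. \textbf{Step~1 (reduction).} Since $\{\xi_n\}$ is bounded in probability, given $\rho>0$ one may fix $R=R(\rho)>0$ with $\sup_n\P(\xi_n>R)\le\rho$. Set, for $i\in\{1,2\}$ and $R>0$,
\[
	\mathcal{K}_R:=\Big\{\,J_i(v)\ :\ \sup_{0\le t\le T}\|v(t)\|_{\L^q}\le R\ \text{ for every }q\in[1,p]\,\Big\}.
\]
I would show that each $\mathcal{K}_R$ is relatively compact in $\C([0,T];\L^p(\1))$. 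Granting this, on the event $\{\xi_n\le R(\rho)\}$, whose probability is at least $1-\rho$ uniformly in $n$, one has $J_i(\varsigma_n)\in\mathcal{K}_{R(\rho)}$; since $\overline{\mathcal{K}_{R(\rho)}}$ is then compact in the separable Banach space $\C([0,T];\L^p(\1))$, this is exactly uniform tightness of the laws of $\{J_i(\varsigma_n)\}$.

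\textbf{Step~2 (Arzel\`a--Ascoli).} To obtain relative compactness of $\mathcal{K}_R$ I would check the two hypotheses of Arzel\`a--Ascoli for continuous $\L^p(\1)$--valued maps on $[0,T]$. First, for every fixed $t$ the set $\{J_i(v)(t):v\}$ is relatively compact in $\L^p(\1)$: when $p$ and the integrability exponent $\eta$ lie in the admissible ranges of Lemma~\ref{lemma3.3}, the constraint $\sup_t\|v(t)\|_{\L^p}\le R$ yields $\int_0^t\|v(s)\|_{\L^p}^\eta\,\d s\le TR^\eta$, hence $\sup_{v}\sup_{0\le t\le T}\|J_1(v)(t)\|_{\frac{1+\e}{p},p}\le C(R,T)$ and $\sup_{v}\sup_{0\le t\le T}\|J_2(v)(t)\|_{\frac{1+\e}{p},2p}\le C(R,T)$; since on the bounded interval $\1$ the embeddings $\W^{\frac{1+\e}{p},p}(\1)\hookrightarrow\L^p(\1)$ and $\W^{\frac{1+\e}{p},2p}(\1)\hookrightarrow\L^{2p}(\1)\hookrightarrow\L^p(\1)$ are compact, the claim follows. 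Second, $\mathcal{K}_R$ is equicontinuous in time: by part~(2) of Lemma~\ref{lemma3.1} and part~(2) of Lemma~\ref{lemma3.2}, $\|J_i(v)(t)-J_i(v)(s)\|_{\L^p}\le C(R,T)\,|t-s|^{\vartheta}$ for a suitable $\vartheta\in(0,1)$, uniformly over all admissible $v$. These two facts give the relative compactness of $\mathcal{K}_R$ in $\C([0,T];\L^p(\1))$, which closes Step~1.

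\textbf{Step~3 (range of $p$ and the main difficulty).} The estimates of Lemmas~\ref{lemma3.1}--\ref{lemma3.3} hold only for $p\ge2\delta+1$ (and $p>2\delta+1$ for the $\W^{\frac{1+\e}{p},2p}$--bound on $J_2$); to cover a general $p\ge1$ I would run Steps~1--2 at an exponent $p_0\ge\max\{p,2\delta+1\}$ and then transfer tightness from $\C([0,T];\L^{p_0}(\1))$ to $\C([0,T];\L^p(\1))$ via the continuous embedding $\L^{p_0}(\1)\hookrightarrow\L^p(\1)$ on the bounded interval. The probabilistic content (passing from boundedness in probability to tightness) is routine; the genuine obstacle is the bookkeeping imposed by the higher--order nonlinearities: unlike the quadratic case of \cite{IG}, here $J_1,J_2$ involve powers of $v$ up to $v^{2\delta+1}$, so one must verify that the pointwise fractional--Sobolev bound (used for the compact embedding) and the H\"older--in--time bound (used for equicontinuity) can be secured \emph{simultaneously} for the same $p$ and $\eta$, which is precisely what dictates the hypothesis $\sup_t\|\varsigma_n(t,\cdot)\|_{\L^q}\le\xi_n$ over the full range $q\in[1,p]$.
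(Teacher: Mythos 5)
The paper itself does not prove this lemma: it is quoted from Lemma 3.3 of \cite{IG}, and the machinery the paper prepares for it (parts (2)--(3) of Lemmas \ref{lemma3.1}--\ref{lemma3.2}, including the convention $J_iv(t,y)=0$ for $y\in\R\setminus[0,1]$) is designed for exactly the reduction you make in Steps 1--2: pathwise, the image under $J_i$ of the set $\{v:\sup_t\|v(t)\|_{\L^q}\leq R\}$ is relatively compact in $\C([0,T];\L^p(\1))$, and boundedness in probability of $\xi_n$ then yields uniform tightness. Your Steps 1--2 are correct in the range where the quoted estimates apply, i.e. $p\geq 2\delta+1$ (and $p>2\delta+1$ for the $\W^{\frac{1+\e}{p},2p}$ bound on $J_2$ in Lemma \ref{lemma3.3}(iv)); the only cosmetic difference from the intended route is that you obtain fixed-time compactness from the fractional Sobolev bounds of Lemma \ref{lemma3.3} plus compact embedding, whereas the translation estimates in part (3) of Lemmas \ref{lemma3.1}--\ref{lemma3.2} allow the same conclusion via the Fr\'echet--Kolmogorov criterion and also cover $J_2$ at the endpoint $p=2\delta+1$ without the strict inequality.

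The genuine gap is Step 3. The hypothesis only controls $\sup_{t}\|\varsigma_n(t,\cdot)\|_{\L^q}$ for $q\in[1,p]$, so when $p<2\delta+1$ you have no bound on $\|\varsigma_n(t)\|_{\L^{p_0}}$ for any $p_0>p$; the inclusion $\L^{p_0}(\1)\subset\L^p(\1)$ goes the wrong way and cannot manufacture one. Consequently you cannot ``run Steps 1--2 at $p_0\geq\max\{p,2\delta+1\}$ and transfer down'': the quantities $\int_0^t\|v(s)\|_{\L^{p_0}}^{\eta}\d s$ entering Lemmas \ref{lemma3.1}--\ref{lemma3.3} are simply not controlled by $\xi_n$. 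Worse, for $p<2\delta+1$ even the basic $\L^p$ bound of $J_{13}\varsigma_n$ requires estimating $\|\varsigma_n^{2\delta+1}\|_{\L^m}=\|\varsigma_n\|_{\L^{m(2\delta+1)}}^{2\delta+1}$ with $m\geq1$, hence an $\L^q$ norm with $q\geq 2\delta+1>p$, which the hypothesis does not supply; so the higher-order terms cannot be handled by Young's inequality within the assumed range of $q$ at all. Your argument therefore establishes the lemma only for $p\geq 2\delta+1$, which is in fact the range in which the paper uses it (proof of Theorem \ref{theorem5.2}); to cover the stated range $p\geq1$ one would need either estimates of $J_1,J_2$ in terms of $\L^q$ norms with $q\leq p$ (unavailable for the $(2\delta+1)$-order term) or an implicit restriction on $p$, and your Step 3 as written does not close this.
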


\begin{remark}
	As discussed in \cite{LZN}, Lemmas \ref{lemma3.1}-\ref{lemma3.6} hold true if we replace $\L^p(0,1)$ by $\L^p(0,1;H)$ also. 
\end{remark}

\section{Solvability Results}\label{sec4}\setcounter{equation}{0}
In this section, we discuss the existence and uniqueness of  mild solution to the system \eqref{1.01}. In order to find a solution of the SGBH equation \eqref{1.01}, we introduce an integral form \eqref{4.02} of the equation \eqref{1.01} in the sense of Walsh (cf. \cite{JBW}). Later, with the help of a truncation defined in \eqref{4.03}, we obtain a truncated integral equation given by \eqref{4.05}. We first prove the existence of a local mild solution to the integral equation \eqref{4.02} (or global mild solution to the truncated integral equation \eqref{4.05}) up to a stopping time by using fixed point arguments (contraction mapping principle). Then we prove  uniform bounds for the weak solution of a related system \eqref{316} for any arbitrary time in Proposition \ref{lem3.4}. The global existence is established by proving that the stopping time up to which the existence of mild solution is known is the same as that of the arbitrary time almost surely.
\begin{definition}[Mild solution]\label{def4.1}
	An $\L^p(\1)$-valued and $\mathscr{F}_t$-adapted stochastic process $u:[0,\infty)\times[0,1]\times\Omega \to \R $ with $\P$-a.s. continuous trajectories on $t\in[0,T]$, is called a \emph{mild solution} to \eqref{1.01}-\eqref{1.1c}, if for any $T>0$, $u(t):=u(t,\cdot,\cdot)$ satisfies the following integral equation:\begin{align}\label{4.01}\nonumber
		u(t)&=\int_0^1G(t,x,y)u_0(y)\d y+\frac{\alpha}{\delta+1}\int_{0}^{t}\int_{0}^{1}\frac{\partial G}{\partial y}(t-s,x,y)\p(u(s,y))\d y \d s \\&\nonumber\quad+\beta\int_{0}^{t}\int_{0}^{1}G(t-s,x,y)\c(u(s,y))\d y\d s \\&\quad+\int_{0}^{t}\int_{0}^{1}G(t-s,x,y)g(s,y,u(s,y))\W(\d s,\d y),
	\end{align} 
$\P$-a.s., for all $t\in[0,T]$, where $\p(u)=u^{\delta+1}$,  $\c(u)=u(1-u^{\delta})(u^{\delta}-\gamma)$ and  $G(\cdot,\cdot,\cdot)$ is the fundamental solution of  the heat equation in the interval $[0,1]$ with Dirichlet boundary conditions.
\end{definition} 
{In the sequel, we denote the term $\int_0^1G(t,x,y)u_0(y)\d y$ by $G(t,x;u_0)$}. A solution to \eqref{1.01} can be defined in the following way also:     An $\L^2(\1)$-valued continuous and $\mathscr{F}_t$-adapted stochastic process $u=\{u(t):t\in[0,T]\}$ is a solution to \eqref{1.01}, if for any test function $\varphi\in \H^2(\1)\cap\H^1_0(\1)$, we have 
\begin{align}
	\int_0^1u(t,x)\varphi(x)\d x&=\int_0^1u_0(x)\varphi(x)\d x+\int_0^t\int_0^1u(s,x)\frac{\partial^2\varphi(x)}{\partial x^2}\d x\d s\nonumber\\&\quad+\frac{\alpha}{\delta+1}\int_0^t\int_0^1\p(u(s,x))\frac{\partial\varphi(x)}{\partial x}\d x\d s+\beta\int_0^t\int_0^1\c(u(s,x))\varphi(x)\d x\d s\nonumber\\&\quad +\int_0^t\int_0^1g(s,x,u(s,x))\varphi(x)\W(\d s,\d x), \ \mathbb{P}\text{-a.s.},
\end{align}
for all $t\in[0,T]$. 
From Theorem 2.2, \cite{IGDN} (see Proposition 3.5, \cite{IG} also),  by using Lemmas \ref{lemma3.1}-\ref{lemma3.6}, we infer that  an $\L^2(\1)$-valued continuous and $\mathscr{F}_t$-adapted stochastic process $u=\{u(t):t\in[0,T]\}$ is a solution to \eqref{1.01} if and only if $u$ satisfies the  integral equation \eqref{4.01} for all $t\in[0,T]$  and a.e. $x\in[0,1]$. For the \emph{existence and uniqueness of mild solution}, we introduce the following hypothesis on the noise coefficient of \eqref{1.01}:
\begin{hypothesis}\label{H1} The function $g:[0,T]\times[0,1]\times \R\to \R$ is a measurable function, satisfying the following conditions:
	\begin{align*}
		|g(t,x,r)|\leq K \ \text{ and }\  |g(t,x,r)-g(t,x,s)|\leq L|r-s|,
	\end{align*} for all $t\in[0,T], $ $x\in[0,1], $ $r,s\in\R$, and for some constants $K,L>0$.
\end{hypothesis}

\subsection{Existence and uniqueness of a local mild solution}
In order to find a solution of SGBH equation, we introduce the following integral form:
\begin{align}\label{4.02}\nonumber
	u(t,x)&=G(t,x;u_0)+\beta\int_{0}^{t}\int_{0}^{1}G(t-s,x,y)\c(u(s,y))\d y\d s\\&\nonumber\quad+\frac{\alpha}{\delta+1}\int_{0}^{t}\int_{0}^{1}\frac{\partial G}{\partial y}(t-s,x,y)\p(u(s,y))\d y \d s\\&\quad+\int_{0}^{t}\int_{0}^{1}G(t-s,x,y)g(s,y,u(s,y))\W(\d s,\d y),
\end{align}	where $G(t,x,y)$ is the fundamental solution of heat equation in $[0,1]$  with the Dirichlet boundary conditions. In order to obtain a solution of integral equation \eqref{4.02}, we use a truncation technique.  For any fixed natural number $n\geq 1$, consider an open ball $B(0,n)$ in the space $\L^p(\1)$ with the center at the origin and radius $n$. Define a mapping $\pi_n:\L^p(\1)\to B(0,n)$  by
\begin{align}\label{4.03}
	\pi_ny=\begin{cases}
		y, &\text { if } \|y\|_{\L^p}\leq n,\\
		\frac{n}{\|y\|_{\L^p}}y, &\text{ if } \|y\|_{\L^p}> n.
	\end{cases}
\end{align}
Let us first show that 
\begin{align}\label{34}
	\|\pi_n u-\pi_nv\|_{\L^p} \leq C\|u-v\|_{\L^p}, \ \text{ for all }\  u,v\in \L^p(\mathcal{O}). 
\end{align}
Without loss of generality, we may assume that $\|u\|_{\L^p} \leq \|v\|_{\L^p}.$ 
\begin{itemize}
	\item[Case 1:] If $\|u\|_{\L^p}\leq n$ and $\|v\|_{\L^p}\leq n$, then $\pi_nu=u$ and $\pi_n v=v$, so that we get \eqref{34} with $C=1$. 
	\item[Case 2:] If $\|u\|_{\L^p}\leq n$ and $\|v\|_{\L^p}> n$,  then $\pi_nu=u$ and $\pi_n v=\frac{n}{\|v\|_{\L^p}}v$. We estimate the term  $\|\pi_n u-\pi_nv\|_{\L^p}$ as
	\begin{align*}
		\|\pi_n u-\pi_nv\|_{\L^p} &= \bigg\|u-\frac{n}{\|v\|_{\L^p}}v\bigg\|_{\L^p} = \frac{1}{\|v\|_{\L^p}}\|(u-v)\|v\|_{\L^p}+v(\|v\|_{\L^p}-n)\|_{\L^p} \\& \leq \|u-v\|_{\L^p}+ \|v\|_{\L^p}-n \leq \|u-v\|_{\L^p}+( \|v\|_{\L^p}-\|u\|_{\L^p}) \\& \leq 2\|u-v\|_{\L^p},
	\end{align*}
and \eqref{34} follows with $C=2$. 
	\item[Case 3:] If $\|u\|_{\L^p}>n$ and $\|v\|_{\L^p}>n$, then $\pi_nu=\frac{n}{\|u\|_{\L^p}}u$ and $\pi_n v=\frac{n}{\|v\|_{\L^p}}v$. Using these relations, we estimate the term $\|\pi_n u-\pi_nv\|_{\L^p} $ as
	\begin{align*}
		\|\pi_n u-\pi_nv\|_{\L^p}  &= \bigg\| \frac{n}{\|u\|_{\L^p}}u-\frac{n}{\|v\|_{\L^p}}v\bigg\|_{\L^p}\\&= \frac{n}{\|u\|_{\L^p}\|v\|_{\L^p}}\|(u-v)\|v\|_{\L^p}+v(\|v\|_{\L^p}-\|u\|_{\L^p})\|_{\L^p} \\& 
		\leq \|u-v\|_{\L^p}+(\|v\|_{\L^p}-\|u\|_{\L^p}) \leq 2\|u-v\|_{\L^p}, 
	\end{align*}
so that  we obtain \eqref{34} with $C=2$. 
\end{itemize}
Also we can write $\pi_ny=y\phi_n(\|y\|_{\L^p}^p)$, where for any  $r>0,$ we set 
\begin{align}\label{4.04}
	\phi_n(r)=\chi_{[0,n^p]}(r)+nr^{-\frac{1}{p}}\chi_{(n^p,\infty)}(r).
\end{align}
Observe that $|\phi_n(r)|\leq 1$ and $|\phi'_n(r)|\leq \frac{1}{pr}\chi_{(n^p,\infty)}(r),$ for all $r>0$. Let us introduce the truncated integral equation 
\begin{align}\label{4.05}\nonumber
	u(t,x)&=G(t,x;u_0)+\beta\int_{0}^{t}\int_{0}^{1}G(t-s,x,y)\c(\pi_nu(s,y))\d y\d s\\&\quad\nonumber+\frac{\alpha}{\delta+1}\int_{0}^{t}\int_{0}^{1}\frac{\partial G}{\partial y}(t-s,x,y)\p(\pi_nu(s,y))\d y \d s\\&\quad +\int_{0}^{t}\int_{0}^{1}G(t-s,x,y)g(s,y,\pi_nu(s,y))\W(\d s,\d y).
\end{align} Let us set 
\begin{align}\label{4.06}\nonumber
	\mathscr{A}_1u(t,x)&:= \int_{0}^{t}\int_{0}^{1}G(t-s,x,y)\c(\pi_nu(s,y))\d y\d s, \\ \nonumber
	\mathscr{A}_2u(t,x)&:=\int_{0}^{t}\int_{0}^{1}\frac{\partial G}{\partial y}(t-s,x,y)\p(\pi_nu(s,y))\d y \d s,\\\nonumber
	\mathscr{A}_3u(t,x)&:=\int_{0}^{t}\int_{0}^{1}G(t-s,x,y)g(s,y,\pi_nu(s,y))\W(\d s,\d y), \\ 
	\mathscr{A}u(t,x)&:=G(t,x;u_0)+	\beta\mathscr{A}_1u(t,x)+\frac{\alpha}{\delta+1}\mathscr{A}_2u(t,x)+\mathscr{A}_3u(t,x).
\end{align}
The operator $\mathscr{A}$ is defined on the Banach space $\mathcal{H}$ formed {by the adapted processes $u:\Omega\times[0,T]\to \L^p(\1)$ such that}  
 \begin{align}\label{4.07}
	\|u\|_{\mathcal{H}}^p:=\int_{0}^{T}e^{-\lambda t}\E\big[\|u(t)\|_{\L^p}^p\big]\d t <\infty,
\end{align}
where $\lambda$ will be fixed later.

\begin{proposition}\label{theorem4.2}
	Let us assume that $u_0\in \L^p(\1),$ for $p\geq 2\delta+1$. Then there exists a unique $\L^p(\1)$-valued $\mathscr{F}_t$-adapted continuous process $u^n(\cdot,\cdot)$ satisfying \eqref{4.05}  such that \begin{align}\label{4.8}\E\left(\sup_{t\in[0,T]}\|u^n(t)\|_{\L^p}^p\right)\leq C(n,T).\end{align}
\end{proposition}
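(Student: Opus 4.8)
The plan is to prove Proposition \ref{theorem4.2} via the Banach fixed point theorem applied to the operator $\mathscr{A}$ on the space $\mathcal{H}$ defined in \eqref{4.07}, and then upgrade the resulting $\mathcal{H}$-fixed point to an $\L^p(\1)$-valued continuous adapted process satisfying the moment bound \eqref{4.8}. First I would check that $\mathscr{A}$ maps $\mathcal{H}$ into itself. The deterministic initial-data term satisfies $\|G(t,\cdot\,;u_0)\|_{\L^p}\leq \|u_0\|_{\L^p}$ by the contraction property of the heat semigroup, so it lies in $\mathcal{H}$. For $\mathscr{A}_1u$, I expand $\c(\pi_nu)=(1+\gamma)(\pi_nu)^{\delta+1}-\gamma(\pi_nu)-(\pi_nu)^{2\delta+1}$ and apply Lemma \ref{lemma3.1} (parts (a)–(c) together with the $\L^p$ estimates in item (1)); since $\|\pi_nu(s)\|_{\L^p}\leq n$ for all $s$, each of $J_{11},J_{12},J_{13}$ evaluated at $\pi_nu$ is bounded pointwise in $t$ by a constant depending only on $n$ and $T$. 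Similarly $\mathscr{A}_2u$ is controlled by Lemma \ref{lemma3.2} applied to $\p(\pi_nu)=(\pi_nu)^{\delta+1}$, again bounded by $C(n,T)$ because of the truncation. For the stochastic term $\mathscr{A}_3u=J_3(g(\cdot,\cdot,\pi_nu))$, I use Lemma \ref{lemma3.5} (estimate \eqref{lemma3.5.1}) together with the boundedness $|g|\leq K$ from Hypothesis \ref{H1}, which gives $\E\|\mathscr{A}_3u(t)\|_{\L^p}^p\leq C\int_0^T(t-s)^{-\frac12-\vartheta}K^p\,\d s<\infty$ for suitable $\vartheta$. Integrating $e^{-\lambda t}$ against these bounds over $[0,T]$ shows $\mathscr{A}u\in\mathcal{H}$.

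Next I would establish the contraction estimate. Using \eqref{34} (Lipschitz continuity of $\pi_n$ on $\L^p$), the local Lipschitz property of the polynomials $\c,\p$ on the ball $B(0,n)$ — writing $\c(\pi_nu)-\c(\pi_nv)$ and $\p(\pi_nu)-\p(\pi_nv)$ as differences of monomials and using $|a^k-b^k|\leq k(|a|^{k-1}+|b|^{k-1})|a-b|$ together with $\|\pi_nu(s)\|_{\L^\infty}$ bounds (or, more precisely, $\L^p$ bounds after Hölder, exploiting the truncation) — and the Lipschitz bound $|g(s,y,r)-g(s,y,r')|\leq L|r-r'|$ from Hypothesis \ref{H1}, I get pointwise-in-time estimates of the form
\begin{align*}
\E\big[\|\mathscr{A}u(t)-\mathscr{A}v(t)\|_{\L^p}^p\big]\leq C(n,T)\int_0^t(t-s)^{-\kappa}\E\big[\|u(s)-v(s)\|_{\L^p}^p\big]\,\d s,
\end{align*}
for some $\kappa\in(0,1)$ coming from the singular kernels in Lemmas \ref{lemma3.1}, \ref{lemma3.2}, \ref{lemma3.5}. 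Multiplying by $e^{-\lambda t}$, integrating over $[0,T]$, interchanging the order of integration and using $\int_0^{T}e^{-\lambda(t-s)}(t-s)^{-\kappa}\d t\leq \int_0^\infty e^{-\lambda r}r^{-\kappa}\d r=\lambda^{\kappa-1}\Gamma(1-\kappa)\to 0$ as $\lambda\to\infty$, I can choose $\lambda$ large enough that $\|\mathscr{A}u-\mathscr{A}v\|_{\mathcal{H}}\leq\frac12\|u-v\|_{\mathcal{H}}$. This is the standard weighted-norm trick that turns a Volterra-type estimate into a genuine contraction; the conditions $p\geq 2\delta+1$ and the constraints on the exponents $\eta_i$ in Lemmas \ref{lemma3.1}–\ref{lemma3.2} are exactly what guarantees the kernel exponents satisfy $\kappa<1$. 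By the Banach fixed point theorem there is a unique $u^n\in\mathcal{H}$ with $\mathscr{A}u^n=u^n$.

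Finally I would promote $u^n$ to a process with $\P$-a.s. continuous $\L^p(\1)$-valued trajectories and prove \eqref{4.8}. For continuity I apply the time-increment and space-increment estimates of Lemmas \ref{lemma3.1} item (2)–(3), \ref{lemma3.2} item (2)–(3), and \ref{lemma3.3} (the fractional Sobolev $\W^{\frac{1+\e}{p},p}$ bounds), plus \eqref{lemma3.5.3} of Lemma \ref{lemma3.5} for the stochastic convolution, all applied with the deterministic bound $\sup_s\|\pi_nu^n(s)\|_{\L^p}\leq n$; together with the Garsia–Rodemich–Rumsey-type embedding \eqref{{2.01}} and a Kolmogorov continuity argument this yields a continuous modification. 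The moment bound \eqref{4.8} then follows by taking $\sup_{t\in[0,T]}$ inside the estimates for $\mathscr{A}_1u^n,\mathscr{A}_2u^n$ (which are already uniform in $t$ thanks to Lemmas \ref{lemma3.1}, \ref{lemma3.2} and the truncation giving bounds $\leq C(n,T)$) and using \eqref{lemma3.5.2} of Lemma \ref{lemma3.5} for $\sup_{t\in[0,T]}\|\mathscr{A}_3u^n(t)\|_{\L^p}^q$ with the bound $|g|\leq K$; the initial term contributes $\|u_0\|_{\L^p}$. I expect the main obstacle to be bookkeeping the exponents: verifying that a single $\eta$ (and hence a single integrability class $\L^\eta(0,T;\L^p)\supseteq\L^\infty(0,T;\L^p)$) simultaneously satisfies all the constraints in Lemmas \ref{lemma3.1}–\ref{lemma3.3} so that every kernel singularity is integrable and every resulting power of $(t-s)$ is strictly less than $1$, which is where the hypothesis $p\geq 2\delta+1$ is used in an essential way; the rest is routine once the weighted norm and the truncation are in place.
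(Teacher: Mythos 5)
Your proposal matches the paper's proof in both structure and substance: a Banach fixed point argument for $\mathscr{A}$ on the weighted space $\mathcal{H}$, with the self-map/uniform bound obtained from the kernel estimates \eqref{A1}, \eqref{A2}, \eqref{A7} (equivalently Lemmas \ref{lemma3.1}--\ref{lemma3.2}) and \eqref{lemma3.5.2} with $|g|\leq K$, the contraction obtained from Taylor's formula, the Lipschitz bound \eqref{34} for $\pi_n$, H\"older/Young and \eqref{lemma3.5.1}, and the large-$\lambda$ gamma-function trick, after which \eqref{4.8} follows from the uniform bound and the continuous $\L^p(\1)$-valued modification from the increment estimates plus Kolmogorov's continuity theorem. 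No essential differences from the paper's argument.
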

\begin{proof}
	To prove this theorem, we first establish that $\mathscr{A}$ is a contraction map and then by contraction mapping principle,  we ensure the existence and uniqueness of solution to the truncated integral equation  \eqref{4.05}.

		\vskip 0.2 cm 
	\noindent	\textbf{Claim 1:}  For $p\geq 2\delta+1$,  $\E\left(\sup\limits_{t\in[0,T]}\|\mathscr{A}u(t)\|_{\L^p}^p\right)\leq C(n,T)$.
	Using Minkowski's inequality, the estimates \eqref{A1}, \eqref{A7}  and Young's inequality, we get
	\begin{align*}
		&\|\mathscr{A}_1u(t)\|_{\L^p} \\&= \bigg\|\int_{0}^{t}\int_{0}^{1}G(t-s,x,y)\left[(1+\gamma)(\pi_nu)^{\delta+1}-\gamma (\pi_nu) -(\pi_nu)^{2\delta+1}\right]\d y\d s\bigg\|_{\L^p}
		\\&\leq C\int_{0}^{t}(t-s)^{-\frac{1}{2}}\left\{(1+\gamma)\|e^{-\frac{|\cdot|^2}{\ell_1(t-s)}}*|(\pi_nu)^{\delta+1}|\|_{\L^p}+\gamma\|e^{-\frac{|\cdot|^2}{\ell_1(t-s)}}*|(\pi_nu)|\|_{\L^p}\right.\\&\qquad\left.+\|e^{-\frac{|\cdot|^2}{\ell_1(t-s)}}*|(\pi_nu)^{2\delta+1}|\|_{\L^p}\right\}\d s
		\\&\leq  C\int_{0}^{t}\left\{(1+\gamma)(t-s)^{-\frac{1}{2}+\frac{p-\delta}{2p}}\|\pi_nu\|_{\L^p}^{\delta+1}+\gamma\|(\pi_nu)\|_{\L^p}+(t-s)^{-\frac{1}{2}+\frac{p-2\delta}{2p}}\|\pi_nu\|_{\L^p}^{2\delta+1}\right\}\d s
		\\& \leq C\int_{0}^{t}\left\{(1+\gamma)n^{\delta+1}(t-s)^{-\frac{1}{2}+\frac{p-\delta}{2p}}+n\gamma+n^{2\delta+1}(t-s)^{-\frac{1}{2}+\frac{p-2\delta}{2p}}\right\}\d s,
	\end{align*}for $p\geq 2\delta+1$. Again, using Minkowski's inequality, the estimates \eqref{A2}, \eqref{A7} and Young's inequality, we estimate the term $\|\mathscr{A}_2u(t)\|_{\L^p}$ as
	\begin{align*}
		\|\mathscr{A}_2u(t)\|_{\L^p} &= \bigg\|\int_{0}^{t}\int_{0}^{1}\frac{\partial G}{\partial y}(t-s,x,y)(\pi_nu)^{\delta+1}\d y\d s\bigg\|_{\L^p}\\& \leq C \int_{0}^{t}(t-s)^{-1}\|e^{-\frac{|\cdot|^2}{\ell_2(t-s)}}*| (\pi_nu)^{\delta+1}|\|_{\L^p}\d s \\& \leq 
		C\int_{0}^{t} (t-s)^{-1+\frac{p-\delta}{2p}}\|\pi_nu\|_{\L^p}^{\delta+1}\d s
		\\& \leq 
		Cn^{\delta+1}\int_{0}^{t} (t-s)^{-1+\frac{p-\delta}{2p}}\d s,
	\end{align*}for $p\geq \delta+1$. For the final term, we use the estimate \eqref{lemma3.5.2}  form Lemma \ref{lemma3.5}  to find 
	\begin{align*}
		\E\left[\sup_{0\leq t\leq T}\|\mathscr{A}_3u(t)\|_{\L^p}^q\right]&=\E\bigg[\sup_{0\leq t\leq T}\bigg\|\int_{0}^{t}\int_{0}^{1}G(t-s,x,y)g(s,y,\pi_nu(s,y))\W(\d s,\d y)\bigg\|_{\L^p}^q\bigg]\\& \leq C\int_{0}^{T}\E\big[\|g(s,\cdot,\pi_nu(s))\|_{\L^p}^q\big]\d s \leq CK^qT<+\infty,
	\end{align*}for any $p\geq 2$ and $q> 4$. Combining all the above estimates, one can complete the proof.

\vskip 0.2 cm
\noindent \textbf{Claim 2:} \emph{$\mathscr{A}$ is a contraction.} 
Let $u,v\in\mathcal{H}$.  Using Taylor's formula, the estimates \eqref{A1}, \eqref{A7}, Minkowski's, Young's and H\"older's inequalities, we get for any $p\geq 2\delta+1$
	\begin{align}\label{LMS}\nonumber
		&\|\mathscr{A}_1u(t)-\mathscr{A}_1v(t)\|_{\L^p} \\&\nonumber= \bigg\|\int_{0}^{t}\int_{0}^{1}G(t-s,x,y)\big[(1+\gamma)((\pi_nu)^{\delta+1}-(\pi_nv)^{\delta+1})-\gamma(\pi_nu-\pi_nv) \\&\nonumber\qquad-((\pi_nu)^{2\delta+1}-(\pi_nv)^{2\delta+1})\big]\d y\d s\bigg\|_{\L^p} \\&\nonumber \leq 
		C\int_{0}^{t}(t-s)^{-\frac{1}{2}} \left\{(1+\gamma)\|e^{-\frac{|\cdot|^2}{\ell_1(t-s)}}*|(\pi_nu)^{\delta+1}-(\pi_nv)^{\delta+1}|\|_{\L^p}+\gamma\|e^{-\frac{|\cdot|^2}{\ell_1(t-s)}}*|\pi_nu-\pi_nv|\|_{\L^p}\right.\\&\nonumber\qquad\left. +\|e^{-\frac{|\cdot|^2}{\ell_1(t-s)}}*|(\pi_nu)^{2\delta+1}-(\pi_nv)^{2\delta+1}|\|_{\L^p}
		\right\}\d s \\&\nonumber \leq C\int_{0}^{t} (t-s)^{-\frac{1}{2}}\bigg\{(1+\gamma)\|e^{-\frac{|\cdot|^2}{\ell_1(t-s)}}\|_{\L^{\frac{p}{p-\delta}}} \|(\pi_nu)^{\delta+1}-(\pi_nv)^{\delta+1}\|_{\L^{\frac{p}{\delta+1}}}+\gamma\|e^{-\frac{|\cdot|^2}{\ell_1(t-s)}}\|_{\L^1}\\&\nonumber\qquad\times\|\pi_nu-\pi_nv\|_{\L^p}+\|e^{-\frac{|\cdot|^2}{\ell_1(t-s)}}\|_{\L^{\frac{p}{p-2\delta}}} \|(\pi_nu)^{2\delta+1}-(\pi_nv)^{2\delta+1}\|_{\L^{\frac{p}{2\delta+1}}} \bigg\}\d s \\&\nonumber \leq  C\int_{0}^{t} (t-s)^{-\frac{1}{2}}\left\{(t-s)^{\frac{p-\delta}{2p}}(1+\gamma)(\delta+1)\|(\pi_nu-\pi_nv)(\theta (\pi_nu)+(1-\theta)(\pi_nv))^\delta\|_{\L^{\frac{p}{\delta+1}}}\right.\\&\nonumber \qquad\left.+(t-s)^{\frac{p-2\delta}{2p}}(2\delta+1)\|(\pi_nu-\pi_nv)(\theta_1(\pi_nu)+(1-\theta_1)(\pi_nv))^{2\delta}\|_{\L^{\frac{p}{2\delta+1}}}\right. \\&\nonumber \qquad\left.+\gamma(t-s)^{\frac{1}{2}}\|\pi_nu-\pi_nv\|_{\L^p}\right\}\d s
		\\& \nonumber\leq C \int_{0}^{t}\left\{(t-s)^{-\frac{\delta}{2p}}(1+\gamma)(\delta+1)\big(\|\pi_nu\|_{\L^p}+\|\pi_nv\|_{\L^p}\big)^\delta+\gamma+(t-s)^{-\frac{\delta}{p}}(2\delta+1)\right.\\&\nonumber\qquad\left.\times\big(\|\pi_nu\|_{\L^p}+\|\pi_nv\|_{\L^p}\big)^{2\delta}\right\}\|\pi_nu-\pi_nv\|_{\L^p}\d s
			\\& \leq C \int_{0}^{t}\left\{(t-s)^{-\frac{\delta}{2p}}(1+\gamma)(\delta+1)(2n)^\delta+\gamma+(t-s)^{-\frac{\delta}{p}}(2\delta+1)(2n)^{2\delta}\right\}\|\pi_nu-\pi_nv\|_{\L^p}\d s	\nonumber\\& \leq C \int_{0}^{t}\left\{(t-s)^{-\frac{\delta}{2p}}(1+\gamma)(\delta+1)(2n)^\delta+\gamma+(t-s)^{-\frac{\delta}{p}}(2\delta+1)(2n)^{2\delta}\right\}\|u-v\|_{\L^p}\d s,
	\end{align} where we have used \eqref{34}. 
Using Taylor's formula, the estimates \eqref{A2}, \eqref{A7}, Minkowski's, Young's and H\"older's inequalities, we estimate the term $\|\mathscr{A}_2u(t)-\mathscr{A}_2v(t)\|_{\L^p}$ as
	\begin{align}\label{4.09}\nonumber
		\|\mathscr{A}_2u(t)-\mathscr{A}_2v(t)\|_{\L^p} &=\bigg\|\int_{0}^{t}\int_{0}^{1}\frac{\partial G}{\partial y}(t-s,x,y)((\pi_nu)^{\delta+1}-(\pi_nv)^{\delta+1})\d y\d s\bigg\|_{\L^p}\\&  \nonumber
		\leq C \int_{0}^{t} (t-s)^{-1} \|e^{-\frac{|\cdot|^2}{\ell_2(t-s)}}*|(\pi_nu)^{\delta+1}-(\pi_nv)^{\delta+1}|\|_{\L^p} \d s \\& \nonumber\leq C\int_{0}^{t}(t-s)^{-1}\|e^{-\frac{|\cdot|^2}{\ell_2(t-s)}}\|_{\L^{\frac{p}{p-\delta}}}\|(\pi_nu)^{\delta+1}-(\pi_nv)^{\delta+1}\|_{\L^{\frac{p}{\delta+1}}}\d s 
		\\& \nonumber \leq C\int_{0}^{t}(t-s)^{-1+\frac{p-\delta}{2p}}(\delta+1)\|(\pi_nu-\pi_nv)(\theta(\pi_nu)+(1-\theta)(\pi_nv))^\delta\|_{\L^{\frac{p}{\delta+1}}}\d s 
		\\& \nonumber \leq C\int_{0}^{t}(t-s)^{-\frac{1}{2}-\frac{\delta}{2p}}(\delta+1)\big(\|\pi_nu\|_{\L^p}+\|\pi_nv\|_{\L^p}\big)^\delta \|\pi_nu-\pi_nv\|_{\L^p}\d s \\& \leq C\int_{0}^{t}(t-s)^{-\frac{1}{2}-\frac{\delta}{2p}}(\delta+1)(2n)^\delta\|\pi_nu-\pi_nv\|_{\L^p}\d s,
		\end{align}for $p\geq \delta+1$. Again using \eqref{34}, for $p\geq \delta+1$, we find
\begin{align*}
	\|\mathscr{A}_2u(t)-\mathscr{A}_2v(t)\|_{\L^p} \leq C\int_{0}^{t}(t-s)^{-\frac{1}{2}-\frac{\delta}{2p}}(\delta+1)(2n)^\delta\|u-v\|_{\L^p}\d s .
\end{align*}
Using Burkholder's inequality, \eqref{lemma3.5.1} for $\vartheta\in(0,1/2)$ form Lemma \ref{lemma3.5}, we obtain  that for any $p\geq 2$,  
\begin{align*}
	\E\big[\|\mathscr{A}_3u(t)-\mathscr{A}_3v(t)\|_{\L^p}^p\big] \leq C\int_{0}^{t}(t-s)^{-\frac{1}{2}-\vartheta}\E\big[\|\pi_nu-\pi_nv\|_{\L^p}^p\big]\d s.
\end{align*}
Once again using \eqref{34}, we deduce 
\begin{align*}
	\E\big[\|\mathscr{A}_3u(t)-\mathscr{A}_3v(t)\|_{\L^p}^p\big]&\leq C\int_{0}^{t}(t-s)^{-\frac{1}{2}-\vartheta}\E\big[\|u-v\|_{\L^p}^p\big]\d s.
\end{align*}
Using the above estimates and applying H\'older's inequality, we conclude
	\begin{align*}
		\|\mathscr{A}u-\mathscr{A}v\|_{\mathcal{H}}^p&=\int_{0}^{T}e^{-\lambda t}\E\big[\|\mathscr{A}u(t)-\mathscr{A}v(t)\|_{\L^p}^p\big]\d t\\& 
		\leq C\int_{0}^{T}e^{-\lambda t}\bigg\{
		\int_{0}^{t}\bigg((t-s)^{-\frac{\delta}{2p}}+1+(t-s)^{-\frac{\delta}{p}}+(t-s)^{-\frac{1}{2}-\frac{\delta}{2p}}+(t-s)^{-\frac{1}{2}-\vartheta}\\&\qquad\times\E\big[\|u(s)-v(s)\|_{\L^p}^p\big]\d s\bigg\}\d t \\&\leq C \bigg(\int_{0}^{\infty}e^{-\lambda y}y^{-\frac{\delta}{2p}} \d y+ \int_{0}^{\infty}e^{-\lambda y}\d y+\int_{0}^{\infty}e^{-\lambda y}y^{-\frac{\delta}{p}} \d y +\int_{0}^{\infty}e^{-\lambda y}y^{-\frac{1}{2}-\frac{\delta}{2p}} \d y\\&\qquad+\int_{0}^{\infty}e^{-\lambda y}y^{-\frac{1}{2}-\vartheta} \d y\bigg)\|u-v\|_{\mathcal{H}}^p \\& \leq 
		C\bigg(\frac{\Gamma\big(1-\frac{\delta}{2p}\big)}{\lambda^{1-\frac{\delta}{2p}}}+\frac{1}{\lambda}+\frac{\Gamma\big(1-\frac{\delta}{p}\big)}{\lambda^{1-\frac{\delta}{p}}}+\frac{\Gamma\big(\frac{1}{2}-\frac{\delta}{p}\big)}{\lambda^{\frac{1}{2}-\frac{\delta}{p}}}+\frac{\Gamma\big(\frac{1}{2}-\vartheta\big)}{\lambda^{\frac{1}{2}-\vartheta}}\bigg)\|u-v\|_{\mathcal{H}}^p,
	\end{align*}for $p\geq 2\delta+1$ and $\vartheta\in(0,1/2)$,  where the constant $C=C(n,\alpha,\beta,\gamma,\delta,p,T)$ and $\Gamma(\cdot)$ represents the gamma function. Combining the above inequality and  Claim 1, one can fix the constant $\lambda>0$ such that \begin{align*}
	C\bigg(\frac{\Gamma\big(1-\frac{\delta}{2p}\big)}{\lambda^{1-\frac{\delta}{2p}}}+\frac{1}{\lambda}+\frac{\Gamma\big(1-\frac{\delta}{p}\big)}{\lambda^{1-\frac{\delta}{p}}}+\frac{\Gamma\big(\frac{1}{2}-\frac{\delta}{p}\big)}{\lambda^{\frac{1}{2}-\frac{\delta}{p}}}+\frac{\Gamma\big(\frac{1}{2}-\vartheta\big)}{\lambda^{\frac{1}{2}-\vartheta}}\bigg)<1,
	\end{align*}for $p\geq 2\delta+1$ and $\vartheta\in(0,1/2)$. For this chosen value of $\lambda,$ the operator $\mathscr{A}$ is a contraction map on $\mathcal{H}$. Therefore there exists a unique fixed point for the contraction map $\mathscr{A}$ and this gives the existence of a unique mild solution (denoted by $u^n(\cdot,\cdot)$) of the truncated integral equation \eqref{4.05}. The estimate \eqref{4.8} follows from Claim 1.  {From the representation  \eqref{4.05} and Lemmas \ref{lemma3.1} and  \ref{lemma3.2}, and Kolmogorov's continuity theorem (see \cite{JBW}), one can show that $u^n(\cdot,\cdot)$ has an $\L^p(\1)$-valued $\mathscr{F}_t$-adapted continuous modification (cf. page 290, \cite{IG} for more details). }
\end{proof}

\begin{corollary}\label{cor3.4}
	{Let $\tau$ and $\sigma$ be stopping times bounded by some constant $T$. Let $u_1$ and $u_2$ be solutions of \eqref{4.05} in the stochastic intervals $[0,\tau]$ and $[0,\sigma]$, respectively. Then $u_1(t)=u_2(t)$, $\mathbb{P}$-a.s., in $\mathrm{L}^p(\mathcal{O})$, for $p\geq 2\delta+1$,  for all $t\in[0,\tau\wedge\sigma]$.}
\end{corollary}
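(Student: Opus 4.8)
The plan is to deduce the statement from the global uniqueness already obtained in Proposition \ref{theorem4.2} via a localized Gronwall argument. Set $\rho=\tau\wedge\sigma$, which is again a stopping time bounded by $T$, and let $u^n$ be the unique $\L^p(\1)$-valued $\mathscr{F}_t$-adapted continuous solution of the truncated equation \eqref{4.05} on $[0,T]$ furnished by Proposition \ref{theorem4.2}. It suffices to show that any solution of \eqref{4.05} on a stochastic interval $[0,\mu]$ (with $\mu\le T$ a stopping time) coincides with $u^n$ on $[0,\mu]$, $\P$-a.s.; applying this with $\mu=\tau$ to $u_1$ and with $\mu=\sigma$ to $u_2$ then yields $u_1=u_2=u^n$ on $[0,\rho]$. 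So I fix such a solution $u$ and put $w(t):=u(t\wedge\mu)-u^n(t\wedge\mu)$, $t\in[0,T]$. Since both $u$ and $u^n$ satisfy \eqref{4.05} on $[0,\mu]$, the deterministic term $G(t,x;u_0)$ cancels, and for $t\le\mu$ the process $w$ solves the integral equation obtained from \eqref{4.06} by replacing the integrands $\c(\pi_n\cdot)$, $\p(\pi_n\cdot)$, $g(s,y,\pi_n\cdot)$ by the corresponding differences. By \eqref{34} the maps $\pi_n$ are globally Lipschitz on $\L^p(\1)$ and satisfy $\|\pi_n z\|_{\L^p}\le n$; together with Hypothesis \ref{H1} and Taylor's formula this makes all three increments globally Lipschitz in $w$, with constants depending only on $n,\alpha,\beta,\gamma,\delta,p,K,L$.

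The next step is to rerun, now for $w$, the estimates of Claim 2 in the proof of Proposition \ref{theorem4.2}. Minkowski's, Young's and H\"older's inequalities together with \eqref{A1}, \eqref{A2}, \eqref{A7} bound the $\beta\mathscr{A}_1$- and $\tfrac{\alpha}{\delta+1}\mathscr{A}_2$-increments in $\L^p$ by $C\int_0^t\big[(t-s)^{-\frac{\delta}{2p}}+1+(t-s)^{-\frac{\delta}{p}}+(t-s)^{-\frac12-\frac{\delta}{2p}}\big]\|w(s)\|_{\L^p}\,\d s$ for $p\geq 2\delta+1$, while Burkholder's inequality in the form \eqref{lemma3.5.1} (with $\vartheta\in(0,1/2)$) handles the stochastic increment. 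The only point beyond Proposition \ref{theorem4.2} is the stopping time: one works with the stopped integrand $\big[g(s,y,\pi_nu(s,y))-g(s,y,\pi_nu^n(s,y))\big]\chi_{[0,\mu]}(s)$, which is predictable since $s\mapsto\chi_{[0,\mu]}(s)$ is left-continuous and adapted, and one uses that on $\{t\le\mu\}$ one has $s\le t\Rightarrow s\le\mu$, so this stopped integrand reproduces $\mathscr{A}_3u(t)-\mathscr{A}_3u^n(t)$ there; the a priori bound \eqref{4.8} (or a routine localization) justifies taking expectations. Writing $h(t):=\E\big[\|w(t)\|_{\L^p}^p\big]$, which is finite by \eqref{4.8}, and taking $p$-th moments of the $\L^p$-bound above (using Jensen against the measure with density $K(t-s)$) gives
\begin{align*}
	h(t)\le C\int_0^{t}\Big[(t-s)^{-\frac{\delta}{2p}}+1+(t-s)^{-\frac{\delta}{p}}+(t-s)^{-\frac12-\frac{\delta}{2p}}+(t-s)^{-\frac12-\vartheta}\Big]h(s)\,\d s ,
\end{align*}
for all $t\in[0,T]$, where every exponent is strictly less than $1$ (recall $\delta/p\le\delta/(2\delta+1)<1/2$ and $\vartheta<1/2$).

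Finally, since $h$ is bounded on $[0,T]$ and the kernel above is a finite sum of integrable singular kernels plus a constant, the generalized (singular) Gronwall inequality --- obtained by iterating the estimate finitely many times until the convolution kernel becomes bounded and then invoking the classical Gronwall lemma --- forces $h\equiv 0$ on $[0,T]$. Hence $u(t\wedge\mu)=u^n(t\wedge\mu)$ in $\L^p(\1)$, $\P$-a.s., for each fixed $t\in[0,T]$, and the $\P$-a.s.\ continuity of the trajectories of $u$ and $u^n$ (Proposition \ref{theorem4.2}) upgrades this to $\P\big(u(t)=u^n(t)\text{ in }\L^p(\1)\text{ for all }t\in[0,\mu]\big)=1$. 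Taking $\mu=\tau$ and $\mu=\sigma$ and intersecting the two full-probability events yields $u_1(t)=u_2(t)$, $\P$-a.s., in $\L^p(\1)$ for all $t\in[0,\tau\wedge\sigma]$. The step requiring the most care is the treatment of the Walsh stochastic integral under the random time $\mu$ --- establishing the predictability of the stopped integrand and justifying the use of \eqref{lemma3.5.1} on the stochastic interval --- together with the singular-Gronwall conclusion; the purely deterministic kernel bounds are already contained in Lemmas \ref{lemma3.1}--\ref{lemma3.2} and in the proof of Proposition \ref{theorem4.2}.
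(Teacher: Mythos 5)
Your overall architecture is the natural one, and it is essentially what the paper intends: the corollary is stated without proof, as an immediate localization of the estimates in Claim 2 of Proposition \ref{theorem4.2} (reduce to comparing each solution with the global solution $u^n$ of \eqref{4.05}, use the Lipschitz bound \eqref{34}, Hypothesis \ref{H1}, the kernel estimates, and \eqref{lemma3.5.1}, and close with a singular Gronwall argument instead of the weighted-norm contraction). Your reduction to a single stopping time $\mu$, the predictability of the stopped integrand, and the final continuity upgrade are all fine.

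There is, however, one step that does not hold as written: the Gronwall inequality for $h(t)=\E\big[\|w(t)\|_{\L^p}^p\big]$ with $w(t)=u(t\wedge\mu)-u^n(t\wedge\mu)$, asserted ``for all $t\in[0,T]$''. The integral identity you use is only available on the event $\{t\le\mu\}$; on $\{t>\mu\}$ one has $w(t)=w(\mu)$, and the bound obtained at the random time $\mu$ involves the kernels $K(\mu-s)$, which are \emph{larger} than $K(t-s)$ for $s\le\mu\le t$ (all your kernels are decreasing in $t-s$), so it cannot be converted into the claimed inequality with kernel $K(t-s)$; moreover \eqref{lemma3.5.1} is stated for deterministic times and does not directly control the stochastic convolution evaluated at $\mu$. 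The standard fix keeps the indicator inside the expectation: work with $\tilde h(t):=\E\big[\chi_{\{t\le\mu\}}\|u(t)-u^n(t)\|_{\L^p}^p\big]$. On $\{t\le\mu\}$ every $s\le t$ satisfies $s\le\mu$, so $\chi_{\{t\le\mu\}}\le\chi_{\{s\le\mu\}}$ lets you dominate the deterministic terms by $C\int_0^t K(t-s)\tilde h(s)\,\d s$, while for the noise term you bound $\chi_{\{t\le\mu\}}$ by $1$ after replacing the integrand by the stopped one (exactly the trick you already introduced) and apply \eqref{lemma3.5.1} to it, whose $\L^p$-norm is controlled by $2L\|u(s\wedge\mu)-u^n(s\wedge\mu)\|_{\L^p}\chi_{\{s\le\mu\}}$. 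The singular Gronwall lemma then gives $\tilde h\equiv0$, i.e.\ $u(t)=u^n(t)$ a.s.\ on $\{t\le\mu\}$ for each $t$, and path continuity (together with \eqref{4.8}, or the localization you mention, which guarantees finiteness of $\tilde h$) upgrades this to all $t\in[0,\mu]$ simultaneously, after which your last paragraph goes through unchanged. So the gap is a genuine but local one — the quantity fed into Gronwall must carry the indicator $\chi_{\{t\le\mu\}}$ rather than being the stopped process — and once repaired your proof is a correct expanded version of the argument the paper leaves implicit.
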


The basic idea of construction of the solution $u(\cdot,\cdot)$ of the integral equation \eqref{4.01} from the solution $u^n(\cdot,\cdot)$ of the truncated integral equation \eqref{4.05} is borrowed from the works \cite{GDDG,IGDN}. Let us now prove the global existence and uniqueness of mild solution of the equation \eqref{1.01}. Proposition \ref{theorem4.2}  implies the uniqueness of a solution and a local existence of a solution of \eqref{1.01}. The global existence will be proved using the uniform estimate obtained in the following proposition.
{
\begin{proposition}\label{lem3.4}
	Let $\varphi=\left\{\varphi(t,x),\ t\in[0,T],\ x\in[0,1]\right\}$ be a continuous function belonging to $\C([0,T];\L^p(\1))$, for $p \geq 2$. Let $v\in \C([0,T];\L^p(\1)) $, for $p\geq  2\delta+1$ be a solution of the integral equation 
	\begin{align}\label{4.13}\nonumber
		v(t,x)&=G(t,x;u_0)+\beta\int_{0}^{t}\int_{0}^{1}G(t-s,x,y)\c(v(s,y)+\varphi(s,y))\d y\d s\\&\quad+\frac{\alpha}{\delta+1}\int_{0}^{t}\int_{0}^{1}\frac{\partial G}{\partial y}(t-s,x,y)\p(v(s,y)+\varphi(s,y))\d y \d s, 
	\end{align}
where  $u_0\in \L^p(\1).$ Then we have 
 \begin{align}\label{3.13}\nonumber
	&\|v(t)\|_{\L^p}^p +\frac{\nu p(p-1)}{2}\int_0^t\||v(s)|^{\frac{p-2}{2}}\partial_x v(s)\|_{\L^2}^2\d s+\beta p\gamma\int_0^t\|v(s)\|_{\L^p}^p\d s+\frac{\beta p}{8}\int_0^t\|v(s)\|_{\L^{p+2\delta}}^{p+2\delta}\d s \\& \leq  \|u_0\|_{\L^p}^p+pK_1T+pK_2T\sup_{t\in[0,T]}\|\varphi(t)\|_{\L^{p(\delta+1)}}^{p(\delta+1)}+pK_3T\sup_{t\in[0,T]}\|\varphi(t)\|_{\L^{p+2\delta}}^{p+2\delta},
\end{align}
where 
\begin{align}\label{3150}
K_1&=\frac{2\delta  }{p+2\delta}\bigg(\frac{8p}{p+2\delta}\bigg)^{\frac{p}{2\delta}}\bigg\{   \frac{2^{\delta}(p-1)^2\alpha^2}{4\nu} +2^{2\delta}\beta(1+\gamma)^2+2^\delta\beta(1+\gamma)+2^{2\delta-1}\beta(2\delta+1) \bigg\}^{\frac{p+2\delta}{2\delta}},\\
K_2&=
 \frac{2^\delta\beta(1+\gamma)}{p}\bigg(\frac{p-1}{p}\bigg)^{p-1} +\frac{2^{\delta}(p-1)^2\alpha^2}{\nu p}\bigg(\frac{2(p-2)}{p}\bigg)^{\frac{2}{p-2}}\label{314},\\ \nonumber K_3&=\frac{1}{p+2\delta}\bigg\{	\bigg(\frac{4(p+2\delta-1)}{\beta(p+2\delta)}\bigg)^{p+2\delta-1}\big(2^{2\delta-1}\beta(2\delta+1)\big)^{p+2\delta} +2\bigg(\frac{4(p+2\delta-2)}{\beta(p+2\delta)}\bigg)^{\frac{p+2\delta-2}{2}}\\&\qquad\times\bigg(\frac{2^{\delta}(p-1)^2\alpha^2}{2\nu}\bigg)^{\frac{p+2\delta}{2}}\bigg\},\label{315}
\end{align}for all $t\in[0,T]$.
\end{proposition}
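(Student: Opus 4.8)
The plan is to regard \eqref{4.13} as the mild form of the quasilinear parabolic equation
\begin{equation*}
	\partial_t v=\nu\,\partial_{xx}v-\tfrac{\alpha}{\delta+1}\,\partial_x\big(\p(v+\varphi)\big)+\beta\,\c(v+\varphi)
\end{equation*}
on $\1=(0,1)$, with the Dirichlet boundary conditions \eqref{1.1b} and $v(0)=u_0$ (recall $\p(w)=w^{\delta+1}$ and $\c(w)=(1+\gamma)w^{\delta+1}-\gamma w-w^{2\delta+1}$), and to establish \eqref{3.13} as an $\L^p$-energy estimate for it. Since a solution $v$ of \eqref{4.13} is a priori only in $\C([0,T];\L^p(\1))$, the computation below must be carried out rigorously after a regularization --- a Galerkin truncation in the Dirichlet eigenbasis of $-\partial_{xx}$, or a mollification of the nonlinearities --- for which the chain rule and the spatial integrations by parts are legitimate; one derives \eqref{3.13} for the regularized solutions with constants independent of the regularization parameter and then passes to the limit, using that the approximations converge in $\C([0,T];\L^p(\1))$ and that the dissipation terms on the left of \eqref{3.13} are weakly lower semicontinuous, while the right-hand side depends only on $u_0$ and $\varphi$ and is therefore stable. (This is, in effect, the construction behind the weak solution of the transformed system \eqref{316} already referred to in the statement.) For brevity I write the computation formally, on $v$ itself.

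The starting point is the chain rule applied to $t\mapsto\|v(t)\|_{\L^p}^p=\int_0^1|v(t,x)|^p\d x$:
\begin{equation*}
	\frac{\d}{\d t}\|v(t)\|_{\L^p}^p=p\int_0^1|v|^{p-2}v\Big[\nu\partial_{xx}v-\tfrac{\alpha}{\delta+1}\partial_x\p(v+\varphi)+\beta\,\c(v+\varphi)\Big]\d x .
\end{equation*}
Integrating by parts (the boundary terms vanish by \eqref{1.1b}), the diffusion term becomes $-\nu p(p-1)\||v|^{\frac{p-2}{2}}\partial_x v\|_{\L^2}^2$; I keep $\tfrac{\nu p(p-1)}{2}$ of it on the left and reserve the other half as an absorption budget. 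The convective term, after integration by parts, equals $\tfrac{\alpha p(p-1)}{\delta+1}\int_0^1|v|^{p-2}(\partial_x v)(v+\varphi)^{\delta+1}\d x$; writing $(v+\varphi)^{\delta+1}=v^{\delta+1}+R_1$ with $|R_1|\le c_\delta(|v|^\delta+|\varphi|^\delta)|\varphi|$ and using $\int_0^1|v|^{p-2}(\partial_x v)\,v^{\delta+1}\d x=0$ (a perfect $x$-derivative with vanishing boundary data), only the $R_1$-remainder survives. In the reaction term, expand $\c(v+\varphi)=(1+\gamma)(v+\varphi)^{\delta+1}-\gamma(v+\varphi)-(v+\varphi)^{2\delta+1}$: the part $-\gamma(v+\varphi)$ gives the good term $-\beta p\gamma\|v\|_{\L^p}^p$, kept on the left, plus a cross term $-\beta p\gamma\int_0^1|v|^{p-2}v\varphi\,\d x$; the part $-(v+\varphi)^{2\delta+1}$, written as $v^{2\delta+1}+R_2$ with $|R_2|\le c_\delta(|v|^{2\delta}+|\varphi|^{2\delta})|\varphi|$, gives the good term $-\beta p\int_0^1|v|^{p-2}v\,v^{2\delta+1}\d x=-\beta p\|v\|_{\L^{p+2\delta}}^{p+2\delta}$ (since $v\cdot v^{2\delta+1}=|v|^{2\delta+2}$), of which $\tfrac{\beta p}{8}$ is kept and $\tfrac{7\beta p}{8}$ reserved as budget; and the part $(1+\gamma)(v+\varphi)^{\delta+1}$ is bounded in modulus, by convexity, by $2^\delta\beta(1+\gamma)\big(|v|^{p+\delta}+|v|^{p-1}|\varphi|^{\delta+1}\big)$. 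Every surviving ``bad'' contribution is then of the form $|v|^{a}|\partial_x v|^{b}|\varphi|^{c}$ with $b\in\{0,1\}$ and $|v|$-exponent $a$ strictly below $p+2\delta$.

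Each such term is disposed of by Young's inequality: when $b=1$, the factor $|v|^{\frac{p-2}{2}}|\partial_x v|$ is split off against the reserved diffusion budget, leaving a pure power $|v|^q$, $p\le q<p+2\delta$; each such power is then split, by Young against the constant $1$, into $\varepsilon|v|^{p+2\delta}+C(\varepsilon)$, the first piece absorbed by $\tfrac{7\beta p}{8}\|v\|_{\L^{p+2\delta}}^{p+2\delta}$ and the constant accumulated into $K_1$; and each $\varphi$-factor is turned, by one more application of Young's inequality together with $\L^{p+2\delta}(\1)\hookrightarrow\L^r(\1)$ for $r\le p+2\delta$ on the bounded interval, into a power of $\|\varphi(t)\|_{\L^{p(\delta+1)}}$ or of $\|\varphi(t)\|_{\L^{p+2\delta}}$ (plus constants, again into $K_1$). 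A careful choice of the Young parameters --- so that the total gradient and $\L^{p+2\delta}$ budgets used do not exceed $\tfrac{\nu p(p-1)}{2}$ and $\tfrac{7\beta p}{8}$ --- and the bookkeeping of which term feeds which bucket (the $\alpha^2/\nu$ weights coming from $R_1$, the $\beta(2\delta+1)$ weights from $R_2$, and the $\beta(1+\gamma)$, $\beta(1+\gamma)^2$ weights from the $(1+\gamma)$-part, with the conjugate exponents $\tfrac{p+2\delta}{2\delta}$, $\tfrac{2}{p-2}$, $p+2\delta-1$, $\tfrac{p+2\delta-2}{2}$ producing the explicit prefactors) reproduce precisely the constants \eqref{3150}--\eqref{315} and the pointwise-in-$t$ inequality
\begin{equation*}
	\frac{\d}{\d t}\|v(t)\|_{\L^p}^p+\frac{\nu p(p-1)}{2}\||v(t)|^{\frac{p-2}{2}}\partial_x v(t)\|_{\L^2}^2+\beta p\gamma\|v(t)\|_{\L^p}^p+\frac{\beta p}{8}\|v(t)\|_{\L^{p+2\delta}}^{p+2\delta}\le pK_1+pK_2\|\varphi(t)\|_{\L^{p(\delta+1)}}^{p(\delta+1)}+pK_3\|\varphi(t)\|_{\L^{p+2\delta}}^{p+2\delta}.
\end{equation*}
Integrating over $[0,t]$, using $\|v(0)\|_{\L^p}=\|u_0\|_{\L^p}$ and $\int_0^t\|\varphi(s)\|_{\L^r}^r\d s\le T\sup_{s\in[0,T]}\|\varphi(s)\|_{\L^r}^r$, and then letting the regularization parameter tend to its limit, yields \eqref{3.13}.

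The genuinely delicate step is the first one: because $v$ carries only $\C([0,T];\L^p)$ regularity a priori, the $\L^p$-energy identity --- the chain rule for $\|v(t)\|_{\L^p}^p$ together with the spatial integrations by parts --- is not literally valid and must be run at the regularized level and then recovered in the limit via lower semicontinuity; the subsequent Young-inequality bookkeeping needed to produce the precise constants $K_1,K_2,K_3$ is lengthy but entirely routine.
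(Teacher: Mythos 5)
Your formal $\L^p$-energy computation coincides with the paper's: testing with $|v|^{p-2}v$, using the cancellation $\int_0^1|v|^{p-2}v^{\delta+1}\partial_x v\,\d x=0$ under the Dirichlet conditions, Taylor-expanding $(v+\varphi)^{\delta+1}$ and $(v+\varphi)^{2\delta+1}$ around $v$, and absorbing the remainders by Young's inequality into half of the diffusion term and a fraction of the $\L^{p+2\delta}$ term is exactly how the paper obtains the pointwise-in-$t$ inequality leading to \eqref{UEE7} and the constants $K_1,K_2,K_3$. So the computational core of your proposal is the paper's Step 3 estimate.

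The genuine gap is in the step you yourself flag as delicate, and the mechanism you invoke there would not work as stated. The paper never obtains (nor needs) convergence of the Galerkin approximations in $\C([0,T];\L^p(\1))$; the available compactness is only weak-$*$ in $\mathrm{L}^\infty(0,T;\L^p(\1))$, weak in $\mathrm{L}^2(0,T;\H_0^1(\1))$ and $\mathrm{L}^{2(\delta+1)}(0,T;\L^{2(\delta+1)}(\1))$, strong in $\mathrm{L}^2(0,T;\L^2(\1))$ via Aubin--Lions, and a.e. (see \eqref{3p26}--\eqref{3p28}). With only this, passing to the limit in the nonlinear terms is not a matter of weak lower semicontinuity: the paper must verify that the operator $\mathfrak{A}$ is coercive, locally monotone \eqref{320} and hemicontinuous, and then identify $\mathfrak{A}_0(t)=\mathfrak{A}(v(t))$ by a Minty--Browder argument \eqref{3p32}--\eqref{3p35}. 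Moreover, even once a limit satisfying \eqref{3.13} is constructed, one must still show it equals the particular $v$ of the statement, which a priori only solves the integral equation \eqref{4.13} in $\C([0,T];\L^p(\1))$; this requires the equivalence of the mild and weak formulations of \eqref{316} (cited from \cite{IGDN,IG}) together with uniqueness of weak solutions (the paper's Step 4, again via the local monotonicity estimate and Gronwall's inequality). Without these two ingredients --- the monotonicity-based identification of the nonlinear limit and the uniqueness argument tying that limit to the given $v$ --- deriving \eqref{3.13} for regularized or Galerkin solutions does not yield it for $v$ itself, so your proof as written is incomplete precisely at that point.
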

}
\begin{proof}
	The proof of  this proposition is divided in the following steps:
	\vskip 0.2cm 
	\noindent\textbf{Step 1:} 
	Let us take  the initial data $u_0\in\L^p(\1)$, for $p\geq 2\delta+1$ and assume that $v(\cdot)$ is a solution of \eqref{4.13} corresponding to  $u_0$. If $v_1,v_2\in\C([0,T];\L^p(\1)) $ are two	solutions of \eqref{4.13} corresponding to the initial conditions $u_0^1$ and $u_0^2$, respectively, then we have 
	\begin{align}\label{3.013}\nonumber
		&\|v_1(t)-v_2(t)\|_{\L^p}\\&\nonumber\leq \|u_0^1-u_0^2\|_{\L^p}\\&\nonumber\quad +C\beta\int_0^t\bigg\{(t-s)^{-\frac{\delta}{2p}}(1+\gamma)(\delta+1)\big(\|v_1(s)+\varphi(s)\|_{\L^p}+\|v_2(s)+\varphi(s)\|_{\L^p}\big)^\delta\\&\nonumber\qquad+\gamma+(t-s)^{-\frac{\delta}{p}}(2\delta+1)\big(\|v_1(s)+\varphi(s)\|_{\L^p}+\|v_2(s)+\varphi(s)\|_{\L^p}\big)^{2\delta}\bigg\}\|v_1(s)-v_2(s)\|_{\L^p}\d s\\&\quad + C\alpha\int_0^t(t-s)^{-\frac{1}{2}-\frac{\delta}{2p}}\big(\|v_1(s)+\varphi(s)\|_{\L^p}+\|v_2(s)+\varphi(s)\|_{\L^p}\big)^\delta\|v_1(s)-v_2(s)\|_{\L^p}\d s,
		\end{align}for $p\geq2\delta+1$, where we have used a similar calculation as in Claim 2 of Proposition \ref{theorem4.2}. 
	Since $v_1,v_2\in\C([0,T];\L^p(\1)) $, an application of Gronwall's inequality yields 
	\begin{align}
		\|v_1(t)-v_2(t)\|_{\L^p}\leq C\|u_0^1-u_0^2\|_{\L^p},\ \text{ for all }\ t\in[0,T],
	\end{align}
which implies the uniqueness of the solution to the integral equation \eqref{4.13}. 
\vskip 0.2cm 
\noindent\textbf{Step 2:} 
Let us now  show that  there is a unique  solution to \eqref{4.13} satisfying   the estimate \eqref{3.13}.  Using  Lemmas \ref{lemma3.1}-\ref{lemma3.5}, one can show in a similar way as in Theorems 2.2 and 2.3, \cite{IGDN} (see \cite{IG} also) that an $\L^p(\1)$-valued continuous function $v$  is a weak solution to
	\begin{equation}\label{316}
		\left\{
		\begin{aligned}
			\frac{\partial v(t)}{\partial t}  &= \nu \frac{\partial^2 v(t) }{\partial x^2}-\frac{\alpha}{\delta+1}\frac{\partial}{\partial x}(\p(v(t)+\varphi(t)))+\beta \c(v(t)+\varphi(t)),\; t\in(0,T),\\
			v(0)&=u_0, 
		\end{aligned}
		\right.
	\end{equation}
 if $v$ satisfies the  integral equation \eqref{4.13} for all $t\in[0,T]$ and for almost all $x\in[0,1]$.
 
In order to prove the existence of weak solution to \eqref{316},  we  first define the operator $\mathfrak{A}:\H_0^1(\1)\to \H^{-1}(\1)$ as 
 \begin{align}
 	\langle\mathfrak{A}(\psi),\zeta\rangle &=\nu\int_0^1\frac{\partial \psi(x)}{\partial x}\frac{\partial \zeta(x)}{\partial x}\d x-\frac{\alpha}{\delta+1}\int_0^1\mathfrak{p}(\psi(x)+\varphi(x))\frac{\partial \zeta(x)}{\partial x}\d x\nonumber\\&\quad-\beta\int_0^1\mathfrak{c}(\psi(x)+\varphi(x))\zeta(x)\d x,
 \end{align}
for all $\psi,\zeta\in \H_0^1(\1)$. Note that as we are working in one-dimension, one can replace the partial derivative symbol with full derivative symbol also. 
\vskip 0.1 cm
\noindent
\emph{The operator $\mathfrak{A}$ is well-defined:} 
It can be easily seen that the operator $\mathfrak{A}$ is well defined since for all $\psi,\zeta\in \H_0^1(\1)$, by using Sobolev's inequality, we have 
\begin{align}\label{3p18}\nonumber
	&|\langle\mathfrak{A}(\psi),\zeta\rangle |\\&\leq \bigg[\nu\|\partial_x\psi\|_{\L^2}+\frac{\alpha}{\delta+1}\|\psi+\varphi\|_{\L^{2(\delta+1)}}^{\delta+1}+\beta\left(\gamma\|\psi+\varphi\|_{\L^2}+(1+\gamma)\|\psi+\varphi\|_{\L^{2(\delta+1)}}^{\delta+1}\right)\bigg]\|\partial_x\zeta\|_{\L^2}\nonumber\\&\quad+\beta\|\psi+\varphi\|_{\L^{2(\delta+1)}}^{2\delta+1}\|\zeta\|_{\L^{2(\delta+1)}}\\&\leq C\bigg[\|\psi\|_{\H_0^1}+\|\psi\|_{\H_0^1}^{\delta+1}+\|\psi\|_{\H_0^1}^{2\delta+1}+\|\varphi\|_{\L^{2(\delta+1)}}^{\delta+1}+\|\varphi\|_{\L^{2(\delta+1)}}^{2\delta+1}\bigg]\|\zeta\|_{\H_0^1},\nonumber
	\end{align}
where we have used the Sobolev embedding $\H_0^1(\1)\subset \L^p(\1),$ for any $p\in[1,\infty],$ also. Therefore, we deduce that 
\begin{align*}
	\|\mathfrak{A}(\psi)\|_{\H^{-1}}\leq C\bigg[\|\psi\|_{\H_0^1}+\|\psi\|_{\H_0^1}^{\delta+1}+\|\psi\|_{\H_0^1}^{2\delta+1}+\|\varphi\|_{\L^{2(\delta+1)}}^{\delta+1}+\|\varphi\|_{\L^{(2\delta+1)}}^{2\delta+1}\bigg],
\end{align*}
for all $\psi\in\H_0^1(\1)$. Hence,  we  cast the system  \eqref{316}  in the following evolution equation: 
\begin{align}\label{318}
	v(t)=u_0-\int_0^t\mathfrak{A}(v(s))\d s,
	\end{align}
for all $t\in[0,T]$ in the triplet $\H_0^1(\1)\subset\L^2(\1)\subset\H^{-1}(\1)$. Let us now prove some properties of the operator $\mathfrak{A}$, so that we obtain the unique weak solution of \eqref{316}. 

\vskip 0.1 cm
\noindent\emph{The operator $\mathfrak{A}$ is coercive:} 
By using integration by parts, Taylor's formula, H\"older's and Young's inequalities, we have for all $\psi\in\H_0^1(\1)$
\begin{align}\label{3p20}
&\langle	\mathfrak{A}(\psi),\psi\rangle\nonumber\\&=\nu\|\partial_x\psi\|_{\L^2}^2-\frac{\alpha}{\delta+1}\int_0^1\mathfrak{p}(\psi(x)+\varphi(x))\frac{\partial\psi(x)}{\partial x}\d x+\beta\gamma\|\psi+\varphi\|_{\L^2}^2+\beta\|\psi+\varphi\|_{\L^{2(\delta+1)}}^{2(\delta+1)}\nonumber\\&\quad-\beta(1+\gamma)\int_0^1(\psi(x)+\varphi(x))^{\delta+1}(\psi(x)+\varphi(x))\d x+\beta\int_0^1\mathfrak{c}(\psi(x)+\varphi(x))\varphi(x)\d x\nonumber\\&\geq \frac{\nu}{2}\|\psi\|_{\H_0^1}^2+\frac{\beta}{2}\|\psi+\varphi\|_{\L^{2(\delta+1)}}^{2(\delta+1)}-2\beta(1+\gamma)^2\|\psi+\varphi\|_{\L^2}^2-\beta\left[2(1+\gamma)^2+\frac{\gamma}{4}\right]\|\varphi\|_{\L^2}^2\nonumber\\&\quad-\left\{\frac{\alpha^2}{\nu}2^{2(\delta-1)}\left[1+\frac{1}{\delta+1}\left(\frac{8\delta}{\beta(\delta+1)}\right)^{\delta}\left(\frac{\alpha^2}{\nu}2^{2(\delta-1)}\right)^{\delta}\right]+\frac{\beta}{2\delta+1}\left(\frac{4(2\delta+1)}{\delta+1}\right)^{2\delta+1}\right\}\|\varphi\|_{\L^{2(\delta+1)}}^{2(\delta+1)}, 
\end{align}
which  implies  that the operator $\mathfrak{A}$ is coercive. 
\vskip 0.1 cm
\noindent
\emph{The operator $\mathfrak{A}$ is locally monotone:} 
The following local monotonicity condition holds for the operator $\mathfrak{A}$: 
\begin{align}\label{320}\nonumber
&	\langle	\mathfrak{A}(\psi_1)-\mathfrak{A}(\psi_2),\psi_1-\psi_2\rangle\\&\nonumber\quad+\bigg\{2^{2\delta-1}\beta(1+\gamma)^2(\delta+1)^2+\frac{27\alpha^4}{32\nu^3}\big(\|\psi_1+\varphi\|_{\L^{2\delta}}+\|\psi_2+\varphi\|_{\L^{2\delta}}\big)^{4\delta}\bigg\}\|\psi_1-\psi_2\|_{\L^2}^2\nonumber\\&\geq 
 \frac{\nu}{2}\|\partial_x(\psi_1-\psi_2)\|_{\L^2}^2+\beta\gamma\|\psi_1-\psi_2\|_{\L^2}^2+\frac{\beta}{2^{2\delta+1}}\|\psi_1-\psi_2\|_{\L^{2(\delta+1)}}^{2(\delta+1)},
	\end{align}
for all $\psi_1,\psi_2\in \H_0^1(\1)$. The condition \eqref{320} can be justified as follows:
\begin{align}\label{LMC1}\nonumber
	&	\langle	\mathfrak{A}(\psi_1)-\mathfrak{A}(\psi_2),\psi_1-\psi_2\rangle \\&\nonumber= \nu \|\partial_x(\psi_1-\psi_2)\|_{\L^2}^2-\frac{\alpha}{\delta+1}\big(\p(\psi_1+\varphi)-\p(\psi_2+\varphi),\partial_x(\psi_1-\psi_2)\big)\\&\quad -\beta\big(\c(\psi_1+\varphi)-\c(\psi_2+\varphi),\psi_1-\psi_2\big),
\end{align}where $\psi_1,\psi_2\in \H_0^1(\1)$. Using Taylor's formula, H\"older's, Agmon's and Young's inequalities in the term $\frac{\alpha}{\delta+1}\big(\p(\psi_1+\varphi)-\p(\psi_2+\varphi),\partial_x(\psi_1-\psi_2)\big)$, we find
\begin{align}\label{LMC2}\nonumber
	&\frac{\alpha}{\delta+1}\big(\p(\psi_1+\varphi)-\p(\psi_2+\varphi),\partial_x(\psi_1-\psi_2)\big) \\&\nonumber = \alpha \big((\psi_1-\psi_2)(\theta (\psi_1+\varphi)+(1-\theta) (\psi_2+\varphi))^\delta,\partial_x(\psi_1-\psi_2)\big) \\& \nonumber\leq 
	 \alpha \big(\|\psi_1+\varphi\|_{\L^{2\delta}}+\|\psi_2+\varphi\|_{\L^{2\delta}}\big)^\delta\|\psi_1-\psi_2\|_{\L^\infty}\|\partial_x(\psi_1-\psi_2)\|_{\L^2}\\& \nonumber\leq 
	  \alpha \big(\|\psi_1+\varphi\|_{\L^{2\delta}}+\|\psi_2+\varphi\|_{\L^{2\delta}}\big)^\delta\|\psi_1-\psi_2\|_{\L^2}^{\frac{1}{2}}\|\partial_x(\psi_1-\psi_2)\|_{\L^2}^{\frac{3}{2}} \\& \leq 
	   \frac{\nu}{2}\|\partial_x(\psi_1-\psi_2)\|_{\L^2}^2+\frac{27\alpha^4}{32\nu^3}\big(\|\psi_1+\varphi\|_{\L^{2\delta}}+\|\psi_2+\varphi\|_{\L^{2\delta}}\big)^{4\delta}\|\psi_1-\psi_2\|_{\L^2}^2.
\end{align}From Theorem 3.1, \cite{MTMAK} (see Eq. (40)), we have 
\begin{align}\label{LMC3}\nonumber
&	\beta\big(\c(\psi_1+\varphi)-\c(\psi_2+\varphi),\psi_1-\psi_2\big)\\& \nonumber\leq -\beta\gamma \|\psi_1-\psi_2\|_{\L^2}^2-\frac{\beta}{4}\|(\psi_1+\varphi)^\delta(\psi_1-\psi_2)\|_{\L^2}^2-\frac{\beta}{4}\|(\psi_2+\varphi)^\delta(\psi_1-\psi_2)\|_{\L^2}^2\\&\quad +2^{2\delta-1}\beta(1+\gamma)^2(\delta+1)^2\|\psi_1-\psi_2\|_{\L^2}^2.
\end{align}
Note that 
\begin{align}\label{LMC4}
	\|\psi_1-\psi_2\|_{\L^{2(\delta+1)}}^{2(\delta+1)}&=\int_0^1|(\psi_1(x)+\varphi(x))-(\psi_2(x)+\varphi(x))|^{2\delta}|\psi_1(x)-\psi_2(x)|^2\d x\nonumber\\&\leq 2^{2\delta-1}\int_0^1\left(|\psi_1(x)+\varphi(x)|^{2\delta}+|\psi_2(x)+\varphi(x)|^{2\delta}\right)|\psi_1(x)-\psi_2(x)|^2\d x\nonumber\\&=2^{2\delta-1}\left(\|(\psi_1+\varphi)^{\delta}(\psi_1-\psi_2)\|_{\L^2}^2+\|(\psi_2+\varphi)^{\delta}(\psi_1-\psi_2)\|_{\L^2}^2\right).
\end{align}
Substituting \eqref{LMC2}-\eqref{LMC4} in \eqref{LMC1}, we obtain the required condition \eqref{320}, which implies that the operator $\mathfrak{A}$ satisfies a local monotonicity condition (on a ball in $\L^{2\delta}(\1)$). That is, if we take a ball $B_N:=\{\zeta\in\L^{2\delta}(\1):\|\zeta\|_{\L^{2\delta}}\leq N\}$, then from \eqref{320}, we infer
\begin{align}
		\langle	\mathfrak{A}(\psi_1)-\mathfrak{A}(\psi_2),\psi_1-\psi_2\rangle+C(1+N^{4\delta})\|\psi_1-\psi_2\|_{\L^2}^2\geq 0, 
\end{align}
for all $\psi_1,\psi_2\in B_N\subset\H_0^1(\1)$. 
\vskip 0.1 cm
\noindent
\emph{The operator $\mathfrak{A}$ is hemicontinuous:} 
Hemicontinuity of the operator $\mathfrak{A}(\cdot)$  follows easily from demicontinuity. 
Let us prove the demicontinuity of the operator $\mathfrak{A}(\cdot)$. We choose a sequence $\{\psi_n\}\in\H_0^1(\1)$ such that $\psi_n\to \psi $ in $\H_0^1(\1)$.  For any $\zeta\in \H_0^1(\1)$, we consider 
\begin{align}\label{DC1}\nonumber
\left|	\langle \mathfrak{A}(\psi_n)-\mathfrak{A}(\psi),\zeta\rangle \right|&= \big|\nu \big(\partial_x(\psi_n-\psi),\partial_x\zeta\big)-\frac{\alpha}{\delta+1}\big(\p(\psi_n+\varphi)-\p(\psi+\varphi), \partial_x\zeta\big)\\&\quad-\beta \big(\c(\psi_n+\varphi)-\c(\psi+\varphi),\zeta\big)\big|. 
\end{align}Let us first take $|\big(\partial_x(\psi_n-\psi),\partial_xv\big)|$, and estimate it as 
\begin{align}\label{DC2}
	\big|\big(\partial_x(\psi_n-\psi),\partial_x\zeta\big)\big| \leq \|\psi_n-\psi\|_{\H_0^1}\|\zeta\|_{\H_0^1} \to 0, \text{ as }n\to\infty, 
\end{align}since $\psi_n\to\psi$ in $\H_0^1(\1)$. 

Using Taylor's formula, H\"older's, Sobolev's and Young's inequalities,  we estimate the term $\big|\big(\p(\psi_n+\varphi)-\p(\psi+\varphi), \partial_x\zeta\big)\big|$ as
\begin{align}\label{DC3}\nonumber
	&\left|\big(\p(\psi_n+\varphi)-\p(\psi+\varphi), \partial_x\zeta\big)\right|\\&\nonumber \leq \big( \|\psi_n+\varphi\|_{\L^{2\delta}}+\|\psi+\varphi\|_{\L^{2\delta}}\big)^{\delta}\|\psi_n-\psi\|_{\L^\infty}\|\partial_x\zeta\|_{\L^2}\\&\nonumber \leq C \big( \|\psi_n+\varphi\|_{\L^{2\delta}}+\|\psi+\varphi\|_{\L^{2\delta}}\big)^{\delta}\|\psi_n-\psi\|_{\H_0^1}\|\partial_x\zeta\|_{\L^2}\\& \to 0, \text{ as } n\to \infty,
\end{align}since $\psi_n\to\psi$ in $\H_0^1(\1)$.  Similarly, we estimate the final term in the right hand side of \eqref{DC1} as
\begin{align}\label{DC4}\nonumber
	&	\left| \big(\c(\psi_n+\varphi)-\c(\psi+\varphi),\zeta\big) \right| \\&\nonumber\leq C \left\{(1+\gamma)(\delta+1)\big(\|\psi_n+\varphi\|_{\L^{2\delta}}+\|\psi+\varphi\|_{\L^{2\delta}}\big)^\delta+\gamma\right.\\&\nonumber\qquad\left.+ (2\delta+1)\big(\|\psi_n+\varphi\|_{\L^{4\delta}}+\|\psi+\varphi\|_{\L^{4\delta}}\big)^{2\delta} \right\}\|\psi_n-\psi\|_{\H_0^1}\|\zeta\|_{\L^2}\\& \to 0, \text{ as } n\to \infty,
\end{align}since $\psi_n\to\psi$ in $\H_0^1(\1)$. From the convergences \eqref{DC1}-\eqref{DC4}, we deduce
\begin{align}\label{DC5}
		|\langle \mathfrak{A}(\psi_n)-\mathfrak{A}(\psi),\zeta\rangle|\to0, \text{ as } n\to \infty ,
		\end{align}for all $\zeta\in \H_0^1(\1)$. Therefore $\mathfrak{A}(\psi_n)\rightharpoonup \mathfrak{A}(\psi), $ in $\H^{-1}(\1)$ and  the operator $\mathfrak{A}:\H_0^1(\1)\to\H^{-1}(\1)$ is demicontinuous, which implies that the operator $\mathfrak{A}(\cdot)$ is hemicontinuous also.
\vskip 0.1 cm
\noindent
\emph{The operator $\mathfrak{A}$ is locally Lipschitz :} Using the same procedure as in the \emph{hemicontinuity} part, for all $\psi_1,\psi_2,\zeta\in\H_0^1(\1),$ we find
\begin{align}\label{LL1}\nonumber
	&	|\langle	\mathfrak{A}(\psi_1)-\mathfrak{A}(\psi_2),\zeta\rangle |\\&\nonumber\leq  \nu \|\psi_1-\psi_2\|_{\H_0^1}\|\zeta\|_{\H_0^1}+C\alpha  \big(\|\psi_1\|_{\H_0^1}+\|\psi_2\|_{\H_0^1}+2\|\varphi\|_{\L^{2\delta}}\big)^\delta\|\psi_1-\psi_2\|_{\H_0^1}\|\zeta\|_{\H_0^1}\\&\nonumber\quad+C\beta \left\{ (1+\gamma)(1+\delta)\big(\|\psi_1\|_{\H_0^1}+\|\psi_2\|_{\H_0^1}+2\|\varphi\|_{\L^{2\delta}}\big)^{\delta} +\gamma\right.\\&\qquad\left.+(2\delta+1) \big(\|\psi_1\|_{\H_0^1}+\|\psi_2\|_{\H_0^1}+2\|\varphi\|_{\L^{4\delta}}\big)^{2\delta} \right\}\|\psi_1-\psi_2\|_{\H_0^1}\|\zeta\|_{\H_0^1}, 
\end{align}where we have replaced $\psi_n$ and $\psi$ by $\psi_1$ and $\psi_2$, respectively, in \eqref{DC1}. From \eqref{LL1}, we find 
\begin{align}\label{LL2}\nonumber
&	\|\mathfrak{A}(\psi_1)-\mathfrak{A}(\psi_2)\|_{\H^{-1}}  \\&\nonumber\leq \left\{\nu+C\alpha (2r+2\|\varphi\|_{\L^{2\delta}})^\delta+C\beta\big((1+\gamma)(1+\delta)(2r+2\|\varphi\|_{\L^{2\delta}})^{\delta}+\gamma\right.\\&\qquad\left.+(2\delta+1)(2r+2\|\varphi\|_{\L^{4\delta}})^{2\delta}\big)\right\}\|\psi_1-\psi_2\|_{\H_0^1},
\end{align}for $\|\psi_1\|_{\H_0^1},\|\psi_2\|_{\H_0^1}\leq r$. Thus the operator $\mathfrak{A}:\H_0^1(\1)\to\H^{-1}(\1)$ is locally Lipschitz.

\vskip 0.2cm 
\noindent\textbf{Step 3:} 
Let us now prove the existence of a weak solution to the equation \eqref{316} by using a Faedo-Galerkin approximation, local monotonicity condition and hemicontinuity.

Let
$\{w_1,\ldots,w_n,\ldots\}$ be a complete orthonormal system in
$\L^2(\1)$ belonging to $\H_0^1(\1)$ and let $\L^2_n(\1)$ be the
$n$-dimensional subspace of $\L^2(\1)$ spanned by $\{w_1,\ldots,w_n \}$. Let $\mathrm{P}_n$ denote
the projection of $\H^{-1}(\1)$ to $\L^2_n(\1)$, that is, $\mathrm{P}_nx=\sum\limits_{k=1}^n\langle x,w_k\rangle w_k$. Since every element $x\in\L^2(\1)$ induces a functional $x^*\in\L^2(\1)$  by the formula $\langle x^*,y\rangle =(x,y)$, $y\in\H_0^1(\1)$, then $\mathrm{P}_n\big|_{\L^2(\1)}$, the orthogonal projection of $\L^2(\1)$ onto $\L^2_n(\1)$  is given by $\mathrm{P}_nx=\sum\limits_{k=1}^n\left(x,w_k\right)w_k$. Hence in particular, $\mathrm{P}_n$ is the orthogonal projection from $\L^2(\1)$ onto $\text{span}\{w_1,\ldots,w_n\}$.  In fact, one can take $\{w_k\}_{k=1}^{\infty}$ as  the eigenvalues and  corresponding eigenfunctions of the operator $\A:= -\frac{\partial^2  }{\partial x^2},$  which is given by 
\begin{align*}
	\lambda_k = k^2\pi^2 \ \text{ and } \ w_k(\xi) = \sqrt{2} \sin(k\pi x), \   k=1,2,\ldots	.
\end{align*}
As we are working on the bounded domain $\1$, the inverse of $\A$, that is, $\A^{-1}$ exists and is a compact operator on $\L^2(\1)$. Moreover, we can define the fractional powers of $\A$ and 
\begin{align}\label{poin}
\|\partial_xu\|_{\L^2}^2=	\|\A^{\frac{1}{2}}u\|_{\L^2}^2=\sum_{k=1}^{\infty}|(u,w_k)|^2 \geq \lambda_1\sum_{k=1}^{\infty}|(u,w_k)|^2 = \lambda_1\|u\|_{\L^2}^2 = \pi^2\|u\|_{\L^2}^2,
\end{align} 
which is the Poincar\'e inequality. 

We define $\mathfrak{A}_n(v^n)=\mathrm{P}_n\mathfrak{A}(v^n)$. With the above setting, let us now consider the following system of ODEs:
\begin{equation}\label{4p11}
	\left\{
	\begin{aligned}
	\langle\partial_tv^n(t),\varphi\rangle&=-\langle\mathfrak{A}_n(v^n(t)),\varphi\rangle,\\
		(v^n(0),\varphi)&=(v_0^n,\varphi),
	\end{aligned}
	\right.
\end{equation}
with $v_0^n=\mathrm{P}_nv_0,$ for all $\varphi\in\L^2_n(\1)$. 
Since $\mathfrak{A}_n(\cdot)$ is locally Lipschitz (see \eqref{LL2}), the system  \eqref{4p11} has a unique local solution $v^n\in\C([0,T^*];\L^2_n(\1))$,  for some $T^*\in(0,T]$ (existence and uniqueness is due to Picard's  theorem). 

Let us now establish the energy estimates satisfied by the system \eqref{4p11} and extend this local solution to a global one. Taking the inner product with $v^n(\cdot)$ to the first equation in \eqref{4p11} and using \eqref{3p20}, we find for all $t\in[0,T]$ 
\begin{align}\label{337}
	\|v^n(t)\|_{\L^2}^2&=\|v_0^n\|_{\L^2}^2-2\int_0^t\langle \mathfrak{A}_n(v^n(s)),v^n(s)\rangle\d s\\&\leq \|v_0\|_{\L^2}^2-\nu\int_0^t\|\partial_xv^n(s)\|_{\L^2}^2\d s-\beta\int_0^t\|v^n(s)+\varphi(s)\|_{\L^{2(\delta+1)}}^{2(\delta+1)}\d s\nonumber\\&\quad+4\beta(1+\gamma)^2\int_0^t\|v^n(s)+\varphi(s)\|_{\L^2}^2\d s+2\beta\left[2(1+\gamma)^2+\frac{\gamma}{4}\right]\int_0^t\|\varphi(s)\|_{\L^2}^2\d s\nonumber\\&\quad+\tilde{K}\int_0^t\|\varphi(s)\|_{\L^{2(\delta+1)}}^{2(\delta+1)}\d s,\nonumber
\end{align}
where $\tilde{K}=2\left\{\frac{\alpha^2}{\nu}2^{2(\delta-1)}\left[1+\frac{1}{\delta+1}\left(\frac{8\delta}{\beta(\delta+1)}\right)^{\delta}\left(\frac{\alpha^2}{\nu}2^{2(\delta-1)}\right)^{\delta}\right]+\frac{\beta}{2\delta+1}\left(\frac{4(2\delta+1)}{\delta+1}\right)^{2\delta+1}\right\}$. Therefore, an application of Gronwall's inequality yields 
\begin{align}\label{323}
	\sup_{t\in[0,T]}	\|v^n(t)\|_{\L^2}^2+\nu\int_0^T\|\partial_x v^n(t)\|_{\L^2}^2\d t+\beta\int_0^T\|v^n(t)\|_{\L^{2(\delta+1)}}^{2(\delta+1)}\d t\leq C. 
\end{align}
Since $v_0\in\L^p(\1)$, for $p\geq 2\delta+1$, one can obtain further energy estimates satisfied by $v^n(\cdot)$. Let us take the inner product with $|v^n(\cdot)|^{p-2}v^n(\cdot)$ to the first  equation in \eqref{4p11} to find
	\begin{align*}
	&	\frac{1}{p}\frac{\d }{\d t} \|v^n(t)\|_{\L^p}^p +\nu (p-1)\||v^n(t)|^{\frac{p-2}{2}}\partial_x v^n(t)\|_{\L^2}^2 \\&=-\frac{\alpha}{\delta+1}\big(\partial_x(v^n(t)+\varphi(t))^{\delta+1},|v^n(t)|^{p-2}v^n(t)\big)\\&\quad+\beta(1+\gamma)\big((v^n(t)+\varphi(t))^{\delta+1},|v^n(t)|^{p-2}v^n(t)\big)\\&\quad -\beta\gamma \big(v^n(t)+\varphi(t),|v^n(t)|^{p-2}v^n(t)\big)-\beta\big((v^n(t)+\varphi(t))^{2\delta+1},|v^n(t)|^{p-2}v^n(t)\big)\\&=-\frac{\alpha}{\delta+1}\big(\partial_x(v^n(t)+\varphi(t))^{\delta+1},|v^n(t)|^{p-2}v^n(t)\big)\\&\quad+\beta(1+\gamma)\big((v^n(t)+\varphi(t))^{\delta+1},|v^n(t)|^{p-2}v^n(t)\big)\\&\quad -\beta\gamma\|v^n(t)\|_{\L^p}^p-\beta\gamma \big(\varphi(t),|v^n(t)|^{p-2}v^n(t)\big)-\beta\|v^n(t)\|_{\L^{p+2\delta}}^{p+2\delta}\\&\quad-\beta(2\delta+1)\big(\varphi(t)(\theta_1v^n(t)+(1-\theta_1)\varphi(t))^{2\delta},|v^n(t)|^{p-2}v^n(t)\big),
\end{align*}for some $\theta_1\in(0,1)$, where we have used Taylor's formula. One can rewrite the previous equality as 
\begin{align}\label{UEE1}\nonumber
	&	\frac{1}{p}\frac{\d }{\d t} \|v^n(t)\|_{\L^p}^p +\nu (p-1)\||v^n(t)|^{\frac{p-2}{2}}\partial_x v^n(t)\|_{\L^2}^2+\beta\gamma\|v^n(t)\|_{\L^p}^p+\beta\|v^n(t)\|_{\L^{p+2\delta}}^{p+2\delta} 
	\\&\nonumber=-\frac{\alpha}{\delta+1}\big(\partial_x(v^n(t)+\varphi(t))^{\delta+1},|v^n(t)|^{p-2}v^n(t)\big)\\&\nonumber\quad+\beta(1+\gamma)\big((v^n(t)+\varphi(t))^{\delta+1},|v^n(t)|^{p-2}v^n(t)\big)\\&\nonumber\quad -\beta\gamma \big(\varphi(t),|v^n(t)|^{p-2}v^n(t)\big)-\beta(2\delta+1)\big(\varphi(t)(\theta_1v^n(t)+(1-\theta_1)\varphi(t))^{2\delta},|v^n(t)|^{p-2}v^n(t)\big)\\&=: I_1+I_2+I_3+I_4.
\end{align}
An integration by parts and Taylor's formula produce 
\begin{align}\label{UEE2}\nonumber
	&\frac{\alpha}{\delta+1}\big(\partial_x(v^n+\varphi)^{\delta+1},|v^n|^{p-2}v^n\big) \\&\nonumber= -\frac{\alpha(p-1)}{\delta+1}\big((v^n+\varphi)^{\delta+1},|v^n|^{p-2}\partial_xv^n\big)\\&\nonumber = -\frac{\alpha(p-1)}{\delta+1}\big(v^n,|v^n|^{p-2}\partial_xv^n\big)\\&\nonumber\quad-\alpha(p-1)\big(\varphi(\theta_2v^n+(1-\theta_2)\varphi)^{\delta},|v^n|^{p-2}\partial_xv^n\big)\\&=-\alpha(p-1)\big(\varphi(\theta_2v^n+(1-\theta_2)\varphi)^{\delta},|v^n|^{p-2}\partial_xv^n\big),
\end{align}for some $\theta_2\in(0,1)$, where we have used  the fact that $\big(v,|v|^{p-2}\partial_xv\big)=0$. Let us consider the term $I_1$ and estimate it using \eqref{UEE2}, H\"older's and Young's inequalities as 
\begin{align}\label{UEE3}\nonumber
	|I_1| &\leq \alpha(p-1)\||v^n|^{\frac{p-2}{2}}\varphi(\theta_2v^n+(1-\theta_2)\varphi)^\delta\|_{\L^2}\||v^n|^{\frac{p-2}{2}}\partial_x v^n\|_{\L^2} \\& \nonumber\leq 
	\frac{\nu}{2}\||v^n|^{\frac{p-2}{2}}\partial_xv^n\|_{\L^2}^2+\frac{2^{\delta}(p-1)^2\alpha^2}{2\nu}\left\{\||v^n|^{\frac{p+2\delta-2}{2}}\varphi\|_{\L^2}^2+\||v^n|^{\frac{p-2}{2}}|\varphi|^{\delta+1}\|_{\L^2}^2\right\} \\&\nonumber \leq 
	\frac{\nu}{2}\||v^n|^{\frac{p-2}{2}}\partial_xv^n\|_{\L^2}^2+\frac{2^{\delta}(p-1)^2\alpha^2}{2\nu}\|v^n\|_{\L^{p+2\delta}}^{p+2\delta-2}\|\varphi\|_{\L^{p+2\delta}}^2\\&\nonumber\quad+\frac{2^{\delta}(p-1)^2\alpha^2}{2\nu}\|v^n\|_{\L^p}^{p-2}\|\varphi\|_{\L^{p(\delta+1)}}^{2(\delta+1)}\\&\nonumber \leq 
	\frac{\nu}{2}\||v^n|^{\frac{p-2}{2}}\partial_xv^n\|_{\L^2}^2+\frac{\beta}{4}\|v^n\|_{\L^{p+2\delta}}^{p+2\delta}+\frac{2}{p+2\delta}\bigg(\frac{4(p+2\delta-2)}{\beta(p+2\delta)}\bigg)^{\frac{p+2\delta-2}{2}}\\&\nonumber\qquad\times\bigg(\frac{2^{\delta}(p-1)^2\alpha^2}{2\nu}\bigg)^{\frac{p+2\delta}{2}}\|\varphi\|_{\L^{p+2\delta}}^{p+2\delta} +\frac{2^{\delta}(p-1)^2\alpha^2}{4\nu}\|v^n\|_{\L^p}^{p}\\&\quad+\frac{2^{\delta}(p-1)^2\alpha^2}{\nu p}\bigg(\frac{2(p-2)}{p}\bigg)^{\frac{p-2}{2}}\|\varphi\|_{\L^{p(\delta+1)}}^{p(\delta+1)}.
\end{align} Now, we consider the term $I_2$ and estimate it using H\"older's, interpolation and Young's inequalities as 
\begin{align}\label{UEE4}\nonumber
	|I_2|& \leq 2^{\delta}\beta(1+\gamma)\big(|v^n|^{\delta+1}+|\varphi|^{\delta+1},|v^n|^{p-1}\big)\\& \nonumber\leq 2^\delta\beta(1+\gamma)\|v^n\|_{\L^{p+\delta}}^{p+\delta}+2^\delta\beta(1+\gamma)\|\varphi\|_{\L^{p(\delta+1)}}^{\delta+1}\|v^n\|_{\L^p}^{p-1}\\&\nonumber\leq 
	2^\delta\beta(1+\gamma)\|v^n\|_{\L^{p+2\delta}}^{\frac{p+2\delta}{2}}\|v^n\|_{\L^p}^{\frac{p}{2}}+2^\delta\beta(1+\gamma)\|v^n\|_{\L^p}^p+\frac{2^\delta\beta(1+\gamma)}{p}\bigg(\frac{p-1}{p}\bigg)^{p-1}\|\varphi\|_{\L^{p(\delta+1)}}^{p(\delta+1)}\\&\nonumber \leq 
	\frac{\beta}{4}\|v^n\|_{\L^{p+2\delta}}^{p+2\delta}+2^{2\delta}\beta(1+\gamma)^2\|v^n\|_{\L^p}^p+2^\delta\beta(1+\gamma)\|v^n\|_{\L^p}^p\\&\quad+\frac{2^\delta\beta(1+\gamma)}{p}\bigg(\frac{p-1}{p}\bigg)^{p-1}\|\varphi\|_{\L^{p(\delta+1)}}^{p(\delta+1)}.
\end{align}Similarly, we estimate the terms $I_3$ and $I_4$ as 
\begin{align}\label{UEE5}
	|I_3| &\leq \beta\gamma\|\varphi\|_{\L^p}\|v^n\|_{\L^p}^{p-1} \leq \beta\gamma \|v^n\|_{\L^p}^p+\frac{\beta\gamma}{p}\bigg(\frac{p-1}{p}\bigg)^{p-1}\|\varphi\|_{\L^p}^p,\\ \label{UEE6}\nonumber
	|I_4| &\leq 2^{2\delta-1}\beta(2\delta+1)\big(|\varphi||v^n|^{2\delta}+|\varphi|^{2\delta+1},|v^n|^{p-1}\big) \\&\nonumber \leq 2^{2\delta-1}\beta(2\delta+1)\left\{\|\varphi\|_{\L^{p+2\delta}}\|v^n\|_{\L^{p+2\delta}}^{p+2\delta-1}+\|\varphi\|_{\L^{p(2\delta+1)}}^{2\delta+1}\|v^n\|_{\L^p}^{p-1} \right\}\\&\nonumber \leq 
	\frac{\beta}{4}\|v^n\|_{\L^{p+2\delta}}^{p+2\delta}+\frac{1}{p+2\delta}\bigg(\frac{4(p+2\delta-1)}{\beta(p+2\delta)}\bigg)^{p+2\delta-1}\big(2^{2\delta-1}\beta(2\delta+1)\big)^{p+2\delta}\|\varphi\|_{\L^{p+2\delta}}^{p+2\delta}\\&\quad +2^{2\delta-1}\beta(2\delta+1)\|v^n\|_{\L^p}^p.
\end{align}Substituting \eqref{UEE3}-\eqref{UEE6} in \eqref{UEE1} and then using H\"older's inequality, we obtain
\begin{align*}
	&	\frac{1}{p}\frac{\d }{\d t} \|v^n(t)\|_{\L^p}^p +\frac{\nu (p-1)}{2}\||v^n(t)|^{\frac{p-2}{2}}\partial_x v^n(t)\|_{\L^2}^2+\beta\gamma\|v^n(t)\|_{\L^p}^p+\frac{\beta}{4}\|v^n(t)\|_{\L^{p+2\delta}}^{p+2\delta} \\& \leq  \bigg\{   \frac{2^{\delta}(p-1)^2\alpha^2}{4\nu} +2^{2\delta}\beta(1+\gamma)^2+2^\delta\beta(1+\gamma)+2^{2\delta-1}\beta(2\delta+1) \bigg\}\|v^n(t)\|_{\L^p}^p\\&\quad+\bigg\{ \frac{2^\delta\beta(1+\gamma)}{p}\bigg(\frac{p-1}{p}\bigg)^{p-1} +\frac{2^{\delta}(p-1)^2\alpha^2}{\nu p}\bigg(\frac{2(p-2)}{p}\bigg)^{\frac{p-2}{2}}\bigg\}\|\varphi(t)\|_{\L^{p(\delta+1)}}^{p(\delta+1)}\\&\quad+\frac{1}{p+2\delta}\bigg\{	\bigg(\frac{4(p+2\delta-1)}{\beta(p+2\delta)}\bigg)^{p+2\delta-1}\big(2^{2\delta-1}\beta(2\delta+1)\big)^{p+2\delta} +2\bigg(\frac{4(p+2\delta-2)}{\beta(p+2\delta)}\bigg)^{\frac{p+2\delta-2}{2}}\\&\qquad\times\bigg(\frac{2^{\delta}(p-1)^2\alpha^2}{2\nu}\bigg)^{\frac{p+2\delta}{2}}\bigg\}\|\varphi(t)\|_{\L^{p+2\delta}}^{p+2\delta}\\& \leq 
	\frac{\beta}{8}\|v^n(t)\|_{\L^{p+2\delta}}^{p+2\delta}+K_1+K_2\|\varphi(t)\|_{\L^{p(\delta+1)}}^{p(\delta+1)}+K_3\|\varphi(t)\|_{\L^{p+2\delta}}^{p+2\delta},
\end{align*}where $K_1$, $K_2$  and $K_3$ are defined in \eqref{3150}, \eqref{314} and \eqref{315}, respectively.  Integrating the above inequality from $0$ to $t$, we find 
\begin{align}\label{UEE7}\nonumber
	&\|v^n(t)\|_{\L^p}^p +\frac{\nu p(p-1)}{2}\int_0^t\||v^n(s)|^{\frac{p-2}{2}}\partial_x v^n(s)\|_{\L^2}^2\d s+\beta p\gamma\int_0^t\|v^n(s)\|_{\L^p}^p\d s\\&\quad\nonumber+\frac{\beta p}{8}\int_0^t\|v^n(s)\|_{\L^{p+2\delta}}^{p+2\delta}\d s \\& \leq  \|u_0\|_{\L^p}^p+pK_1T+pK_2T\sup_{t\in[0,T]}\|\varphi(t)\|_{\L^{p(\delta+1)}}^{p(\delta+1)}+pK_3T\sup_{t\in[0,T]}\|\varphi(t)\|_{\L^{p+2\delta}}^{p+2\delta},
\end{align}
for all $t\in[0,T]$. The estimates \eqref{3p18} and \eqref{323}  guarantee  that 
\begin{align}\label{WCG1}\nonumber
	\int_0^T\|\partial_tv^n(t)\|_{\H^{-1}}^{\frac{2\delta+2}{2\delta+1}}\d t&\leq 	\int_0^T\|\mathfrak{A}_n(v^n(t))\|_{\H^{-1}}^{\frac{2\delta+2}{2\delta+1}}\d t\nonumber\\&\leq C\int_0^T\bigg[\|\partial_xv^n(t)\|_{\L^2}^{\frac{2\delta+2}{2\delta+1}}+\|v^n(t)+\varphi(t)\|_{\L^{2(\delta+1)}}^{\frac{2(\delta+1)^2}{2\delta+1}}+\|v^n(t)+\varphi(t)\|_{\L^2}^{\frac{2\delta+2}{2\delta+1}}\bigg]\d t\nonumber\\&\quad+C\int_0^T\|v^n(t)+\varphi(t)\|_{\L^{2(\delta+1)}}^{2(\delta+1)}\d t\leq C. 
\end{align}
Using the estimates \eqref{323} and \eqref{WCG1}, and an application of the Banach-Alaoglu theorem ensure the existence of a subsequence of $v^n$ (still denoted by the same symbol) such that 
\begin{equation}\label{3p26}
	\left\{
	\begin{array}{ll}
	v^n\xrightharpoonup{\ast}v&\ \text{ in }\ \mathrm{L}^{\infty}(0,T;\L^2(\1)),\\
	v^n(T)\rightharpoonup v(T)&\ \text{ in }\ \L^2(\1),\\
	v^n\rightharpoonup v&\ \text{ in }\ \mathrm{L}^{2}(0,T;\H_0^1(\1)),\\
	v^n\rightharpoonup v&\ \text{ in }\ \mathrm{L}^{2(\delta+1)}(0,T;\L^{2(\delta+1)}(\1)),\\
	\partial_tv^n\rightharpoonup \partial_tv&\ \text{ in }\ \mathrm{L}^{\frac{2\delta+2}{2\delta+1}}(0,T;\H^{-1}(\1)), \\
	\mathfrak{A}_n(v^n)\rightharpoonup\mathfrak{A}_0&\ \text{ in }\ \mathrm{L}^{\frac{2\delta+2}{2\delta+1}}(0,T;\H^{-1}(\1)). 
	\end{array}
	\right.
\end{equation}
Furthermore, from the energy estimate \eqref{UEE7}, we also have 
\begin{equation}
	v^n\xrightharpoonup{\ast}v\ \text{ in }\ \mathrm{L}^{\infty}(0,T;\L^p(\1)), \ \text{ for all }\ p\geq 2\delta+1.
	\end{equation}
Since the embedding $\H_0^1(\1)\subset\L^2(\1)\subset\H^{-1}(\1)$ is continuous, and the embedding $\H_0^1(\1)\subset\L^2(\1)$ is compact, we use the convergence given in \eqref{3p26} to deduce along Aubin-Lions compactness  lemma (cf. \cite{JSi}) that 
\begin{align}
	v^n\to v\ \text{ in }\ \L^2(0,T;\L^2(\1)). 
\end{align}
Along a further subsequence (still denoted by the same symbol), we also have 
\begin{align}\label{3p28}
	v^n(t,x)\to v(t,x)\ \text{ for a.e. }\ (t,x)\in[0,T]\times[0,1]. 
\end{align}
Using the convergence in \eqref{3p26}, one can pass the limit in \eqref{4p11} to obtain the following system  for a.e. $t\in[0,T]$:
\begin{equation}\label{3p29}
	\left\{
	\begin{aligned}
		\langle\partial_tv(t),\varphi\rangle&=-\langle\mathfrak{A}_0(t),\varphi\rangle,\\
		(v(0),\varphi)&=(u_0,\varphi),
	\end{aligned}
	\right.
\end{equation}
for all $\varphi\in\H_0^1(\1)$. Now, it is left to show that $$\mathfrak{A}(v(t))=\mathfrak{A}_0(t)\ \text{ for a.e. }\ t\in[0,T],$$ in order to obtain a weak solution to the system \eqref{318}. Since $v \in \mathrm{L}^{2}(0,T;\H_0^1(\1))$ and $\partial_tv\in \mathrm{L}^{\frac{2\delta+2}{2\delta+1}}(0,T;\H^{-1}(\1))$ imply that  $t\mapsto\|v(t)\|_{\L^2}^2$ is absolutely continuous with $\frac{1}{2}\frac{\d}{\d t}\|v(t)\|_{\L^2}^2= \langle\partial_tv(t),v(t)\rangle$ for a.e. $t\in[0,T]$ and $v\in\C([0,T];\L^2(\1))$ (Theorem 7.2 and Exercise 8.2, \cite{JCR}) satisfies the following energy equality:
\begin{align}\label{352}
	\|v(t)\|_{\L^2}^2=\|v_0\|_{\L^2}^2-2\int_0^t\langle\mathfrak{A}_0(s),v(s)\rangle\d s, 
\end{align}
for all $t\in[0,T]$. In fact, with the extra regularity given in \eqref{UEE7}, one can easily obtain $\partial_tv\in \mathrm{L}^{2}(0,T;\H^{-1}(\1))$.   Since $\|\mathrm{P}_nu_0-u_0\|_{\L^2}\to 0$ as $n\to\infty$, and using the fact that norm is weakly lower semicontinuous, from \eqref{337} and \eqref{352}, we deduce
\begin{align}\label{3p32}
	\liminf_{n\to\infty}\int_0^T\langle \mathfrak{A}_n(v^n(t)),v^n(t)\rangle\d t&=-\frac{1}{2}\limsup_{n\to\infty}	\|v^n(T)\|_{\L^2}^2+\frac{1}{2}\liminf_{n\to\infty}\|v_0^n\|_{\L^2}^2\nonumber\\&\leq -\frac{1}{2}\|v(T)\|_{\L^2}^2+\frac{1}{2}\|v_0\|_{\L^2}^2=\int_0^T \langle\mathfrak{A}_0(t),v(t)\rangle\d t.
\end{align}
From the local monotonicity condition \eqref{320}, we infer  
\begin{align}\label{3p33}
	&\liminf_{n\to\infty}	\int_0^T\langle\mathfrak{A}(v^n(t))-\mathfrak{A}(u(t)),v^n(t)-u(t)\rangle\d t\nonumber\\&\quad+C\liminf_{n\to\infty}\int_0^T\left(1+\|v^n(t)\|_{\L^{2\delta}}^{4\delta}+\|u(t)\|_{\L^{2\delta}}^{4\delta}\right)\|v^n(t)-u(t)\|_{\L^2}^2\d t\geq 0, 
\end{align}
for some $u\in \mathrm{L}^{\infty}(0,T;\L^2_m(\1))$,  $m<n$. Using the fact that $v^n\in\mathrm{L}^{\infty}(0,T;\L^{p}(\1))$, for all $p\geq 2\delta+1$, we obtain 
\begin{align}\label{3p31}
	&	\int_0^T\left(1+\|v^n(t)\|_{\L^{2\delta}}^{4\delta}+\|u(t)\|_{\L^{2\delta}}^{4\delta}\right)\|v^n(t)-v(t)\|_{\L^2}^2\d t\nonumber\\&\leq CT\sup_{t\in[0,T]}\left(1+\|v^n(t)\|_{\L^{2\delta}}^{4\delta}+\|u(t)\|_{\L^{2\delta}}^{4\delta}\right)\left(\|v^n(t)\|_{\L^2}^2+\|u(t)\|_{\L^2}^2\right)\leq C,
\end{align}
where $C$ is independent of $n$ (cf. \eqref{UEE7}). Thus, using the above estimate and  the convergence \eqref{3p28}, from the Dominated Convergence Theorem (DCT), we deduce 
\begin{align}\label{3p34}
	&	\lim_{n\to\infty}\int_0^T\left(1+\|v^n(t)\|_{\L^{2\delta}}^{4\delta}+\|u(t)\|_{\L^{2\delta}}^{4\delta}\right)\|v^n(t)-u(t)\|_{\L^2}^2\d t\nonumber\\&= \int_0^T\left(1+\|v(t)\|_{\L^{2\delta}}^{4\delta}+\|u(t)\|_{\L^{2\delta}}^{4\delta}\right)\|v(t)-u(t)\|_{\L^2}^2\d t.
\end{align} 
Therefore, using \eqref{3p32} and \eqref{3p34} in  \eqref{3p33}, we further have 
\begin{align}\label{3p35}
&	\int_0^T\langle\mathfrak{A}_0(t)-\mathfrak{A}(u(t)),v(t)-u(t)\rangle\d t\nonumber\\&\quad+\int_0^T\left(1+\|v(t)\|_{\L^{2\delta}}^{4\delta}+\|u(t)\|_{\L^{2\delta}}^{4\delta}\right)\|v(t)-u(t)\|_{\L^2}^2\d t\geq 0.
\end{align}
The above estimate holds for any $u\in \L^{\infty}(0,T;\L^2_m(\1)),$ for any $m\in\mathbb{N}$. It is clear by a density argument that the above inequality remains true for any $u\in \L^{\infty}(0,T;\L^p(\1))\cap\mathrm{L}^2(0,T;\H_0^1(\1))\cap\mathrm{L}^{2(\delta+1)}(0,T;\L^{2(\delta+1)}(\1))$, for $p\geq 2\delta+1$.  

Let us now take $v=u+\lambda w$, $\lambda>0$, where  $w\in \L^{\infty}(0,T;\L^p(\1))\cap\mathrm{L}^2(0,T;\H_0^1(\1))\cap\mathrm{L}^{2(\delta+1)}(0,T;\L^{2(\delta+1)}(\1)),$ for $p\geq 2\delta+1$,   and substitute it in \eqref{3p35} to find 
\begin{align*}
	&	\int_0^T\langle\mathfrak{A}_0(t)-\mathfrak{A}(v(t)-\lambda w(t)),w(t)\rangle\d t\nonumber\\&\quad+\lambda\int_0^T\left(1+\|v(t)\|_{\L^{2\delta}}^{4\delta}+\|v(t)-\lambda w(t)\|_{\L^{2\delta}}^{4\delta}\right)\|w(t)\|_{\L^2}^2\d t\geq 0, 
\end{align*}
Letting $\lambda\to 0$  and then by using the hemicontinuity property \eqref{DC5}  of $\mathfrak{A}(\cdot)$, we obtain 
\begin{align}
	\int_0^T\langle\mathfrak{A}_0(t)-\mathfrak{A}(v(t)),w(t)\rangle\d t\geq 0, 
\end{align}
where we have performed a calculation similar to \eqref{3p31}. Since $w$ is arbitrary, we deduce 
$$\mathfrak{A}_0(t)=\mathfrak{A}(v(t)), \ \text{ for a.e. } t\in[0,T],$$  as required.  It should be  noted that  an application of interpolation inequality yields 
\begin{align}\label{3p61}
	&	\int_0^T\left(1+\|v(t)\|_{\L^{2\delta}}^{4\delta}+\|w(t)\|_{\L^{2\delta}}^{4\delta}\right)\|w(t)\|_{\L^2}^2\d t\nonumber\\&\leq \sup_{t\in[0,T]}\|w(t)\|_{\L^2}^2\int_0^T\left(1+\|v(t)\|_{\L^{2\delta+p}}^{\frac{4\delta^2-p^2}{\delta}}\|v(t)\|_{\L^p}^{\frac{p^2}{\delta}}+\|w(t)\|_{\L^{2\delta+p}}^{\frac{4\delta^2-p^2}{\delta}}\|w(t)\|_{\L^p}^{\frac{p^2}{\delta}}\right)\d t\leq C, 
\end{align}
for all $\delta\leq p\leq 2\delta$.  Therefore, for $\delta\in[1,2]$, one can obtain the existence of a weak solution for  any $u_0\in\L^2(\1)$, while for $2<\delta<\infty$, a weak solution exists for any $u_0\in\L^p(\1)$ for $p\geq \delta$. 
Hence, the evolution equation \eqref{316} has a weak solution $v\in\C([0,T];\L^2(\1)),$  such that the energy equality \eqref{352} is satisfied by $v(\cdot)$. That is, 
\begin{align}
	&\|v(t)\|_{\L^2}^2+2\nu\int_0^t\|\partial_xv(t)\|_{\L^2}^2\d s+2\beta\gamma\int_0^t\|v(s)+\varphi(s)\|_{\L^2}^2\d s+2\beta\int_0^t\|v(s)+\varphi(s)\|_{\L^{2(\delta+1)}}^{2(\delta+1)}\d s\nonumber\\&=\|v_0\|_{\L^2}^2+2\alpha\int_0^t(v(s)+\varphi(s))^{\delta}\partial_x(v(s)+\varphi(s))\varphi(s)\d s\nonumber\\&\quad+2\beta(1+\gamma)\int_0^t(v(s)+\varphi(s))^{\delta+1}(v(s)+\varphi(s))\d s-2\beta\int_0^t\mathfrak{c}(v(s)+\varphi(s))\varphi(s)\d s, 
\end{align}
for all $t\in[0,T]$.

\vskip 0.2cm 
\noindent\textbf{Step 4:} 
The uniqueness of weak solution can be established in the following way: If $v_1(\cdot)$ and $v_2(\cdot)$ are two weak solutions of \eqref{316} with the same initial data $u_0$, then from \eqref{318}, we infer  for all $t\in[0,T]$ that 
\begin{align*}
		(v_1(t)-v_2(t),\zeta)=-\int_0^t\langle\mathfrak{A}(v_1(s))-\mathfrak{A}(v_2(s)),\zeta\rangle\d s,
\end{align*}
for all $\zeta\in\H_0^1(\1)$.  Since the energy equality \eqref{352} holds true, by using \eqref{320},  we further have for all $t\in[0,T]$
\begin{align}\label{361}
	&\|	v_1(t)-v_2(t)\|_{\L^2}^2\nonumber\\&=-2\int_0^t\langle\mathfrak{A}(v_1(s))-\mathfrak{A}(v_2(s)),v_1(s)-v_2(s)\rangle\d s\nonumber\\&\leq-\nu\int_0^t\|v_1(s)-v_2(s)\|_{\H_0^1}^2\d s-2\beta\gamma\int_0^t\|v_1(s)-v_2(s)\|_{\L^2}^2\d s-\frac{\beta}{2^{2\delta}}\int_0^t\|v_1(s)-v_2(s)\|_{\L^{2(\delta+1)}}^{2(\delta+1)}\d s\nonumber\\&\quad+2\int_0^t\bigg\{\tilde{L}+\frac{27\alpha^4}{32\nu^3}\big(\|v_1(s)+\varphi(s)\|_{\L^{2\delta}}+\|v_2(s)+\varphi(s)\|_{\L^{2\delta}}\big)^{4\delta}\bigg\}\|v_1(s)-v_2(s)\|_{\L^2}^2\d s, 
\end{align}
where $\tilde{L}=2^{2\delta-1}\beta(1+\gamma)^2(\delta+1)^2$. Using the fact that $v_1,v_2\in \L^{\infty}(0,T;\L^p(\1))$, for $p\geq 2\delta+1$, uniqueness follows immediately by an application of Gronwall's inequality in \eqref{361}.  Note that using the estimate \eqref{3p61},  the uniqueness of weak solution holds if $u_0\in\L^2(\1)$ for $\delta\in[1,2]$, and if  $u_0\in\L^p(\1)$ for any $p\geq \delta,$ for $2<\delta<\infty$. 
By the uniqueness of weak solution of \eqref{316}, we obtain that $v$  is also the unique solution to \eqref{4.13}. 
\end{proof}

\begin{theorem}\label{theorem4.3}
	Let us assume that $u_0\in \L^p(\1),$ for $p\geq  2\delta+1$. Then there exists a unique $\L^p(\1)$-valued $\mathscr{F}_t$-adapted continuous process $u(\cdot)$ satisfying \eqref{4.02}  such that 
	\begin{align}\label{411}
		\E\left(\sup_{t\in[0,T]}\|u(t)\|_{\L^p}^p\right)\leq C(T).
	\end{align}
\end{theorem}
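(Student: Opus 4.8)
The plan is to build the global solution by patching together the truncated solutions $u^n(\cdot,\cdot)$ obtained in Proposition \ref{theorem4.2}, and to exclude finite-time explosion by feeding the a priori estimate \eqref{3.13} of Proposition \ref{lem3.4} the random forcing coming from the stochastic convolution part of \eqref{4.02}. \textbf{Step 1 (localization).} For each $n$ I would set $\tau_n:=\inf\{t\in[0,T]:\|u^n(t)\|_{\L^p}>n\}\wedge T$. Since $\pi_n u^n=u^n$ on the stochastic interval $[0,\tau_n]$, the process $u^n$ solves the untruncated equation \eqref{4.02} there; and for $m\ge n$ it also solves the $m$-truncated equation \eqref{4.05} on $[0,\tau_n]$, so Corollary \ref{cor3.4} forces $u^m=u^n$ on $[0,\tau_n]$ and hence $\tau_n\le\tau_m$ $\P$-a.s. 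Writing $\tau_\infty:=\lim_{n\to\infty}\tau_n$ and $u(t):=u^n(t)$ for $t\le\tau_n$ then defines, consistently, an $\mathscr{F}_t$-adapted process with $\P$-a.s. continuous trajectories on $[0,\tau_\infty)$ which solves \eqref{4.02}.

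\textbf{Step 2 (the uniform a priori bound).} Let $W^n(t,x):=\int_0^t\int_0^1 G(t-s,x,y)g(s,y,\pi_n u^n(s,y))\,\W(\d s,\d y)$; by Lemmas \ref{lemma3.1}, \ref{lemma3.2}, \ref{lemma3.5} and Kolmogorov's continuity theorem, $W^n$ has continuous trajectories in $\L^r(\1)$ for every $r\ge2$. Inserting $\pi_n u^n=v^n+W^n$, with $v^n:=u^n-W^n$, into \eqref{4.02} shows that on $[0,\tau_n]$ the shift $v^n$ satisfies precisely the integral equation \eqref{4.13} with $\varphi=W^n$; by the pathwise uniqueness for \eqref{4.13} contained in Step~1 of the proof of Proposition \ref{lem3.4}, $v^n$ agrees on $[0,\tau_n]$ with the global-in-$[0,T]$ solution $\widetilde v^n$ of \eqref{4.13} associated with this $\varphi$. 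Proposition \ref{lem3.4} then bounds $\sup_{t\in[0,T]}\|\widetilde v^n(t)\|_{\L^p}^p$ by $\|u_0\|_{\L^p}^p+pK_1T+pK_2T\sup_{t\in[0,T]}\|W^n(t)\|_{\L^{p(\delta+1)}}^{p(\delta+1)}+pK_3T\sup_{t\in[0,T]}\|W^n(t)\|_{\L^{p+2\delta}}^{p+2\delta}$. Since $|g|\le K$ by Hypothesis \ref{H1}, estimate \eqref{lemma3.5.2} of Lemma \ref{lemma3.5} gives $\E[\sup_{t\in[0,T]}\|W^n(t)\|_{\L^r}^q]\le CK^qT$ for all $q>4$, $r\ge2$, with $C$ \emph{independent of $n$}; choosing $q$ large and using Jensen's inequality I then obtain $\E[\sup_{t\in[0,T]}\|u^n(t\wedge\tau_n)\|_{\L^p}^p]\le C(T)$, uniformly in $n$.

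\textbf{Step 3 (non-explosion, global bound, uniqueness).} On $\{\tau_n<T\}$ continuity gives $\sup_{t\in[0,\tau_n]}\|u^n(t)\|_{\L^p}=n$, so Chebyshev's inequality together with Step~2 yields $\P(\tau_n<T)\le C(T)n^{-p}\to0$; since $\{\tau_\infty<T\}\subseteq\{\tau_n<T\}$ for every $n$, this forces $\tau_\infty\ge T$ $\P$-a.s., and as $T>0$ is arbitrary we obtain a global solution with continuous, adapted trajectories solving \eqref{4.02}. With $u$ now defined on $[0,T]$, repeating the computation of Step~2 with $W:=J_3(g(\cdot,\cdot,u))$ and $v:=u-W$ identifies $v\in\C([0,T];\L^p(\1))$ as the solution of \eqref{4.13} with $\varphi=W$, and Proposition \ref{lem3.4} together with $|g|\le K$ (via Lemma \ref{lemma3.5}, exactly as above) delivers the bound \eqref{411}. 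For uniqueness, any two solutions of \eqref{4.02} that are a.s. bounded in $\C([0,T];\L^p(\1))$ each solve the $n$-truncated equation \eqref{4.05} up to the stopping time $\inf\{t:\|u(t)\|_{\L^p}>n\}$, hence coincide with $u^n$ there by Corollary \ref{cor3.4}; since these stopping times increase to $T$ almost surely, the two solutions agree on $[0,T]$.

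\textbf{Expected main obstacle.} The crux is Step~2: one must ensure that the right-hand side of \eqref{3.13}, evaluated at the \emph{random} forcing $\varphi=W^n$, has finite expectation bounded \emph{uniformly in the truncation level $n$}. This is exactly where the boundedness hypothesis $|g|\le K$ is indispensable, since it decouples the moments of the stochastic convolution $W^n$ from the size of $u^n$, so that \eqref{lemma3.5.2} produces the $n$-free bound $CK^qT$; were $g$ to grow in $u$, a feedback loop between $\|u^n\|_{\L^p}$ and $\|W^n\|_{\L^r}$ would appear and the localization could not be closed. A secondary, purely bookkeeping, point is the identification on $[0,\tau_n]$ of the localized shift $u^n-W^n$ with the global solution $\widetilde v^n$ of \eqref{4.13}, which rests on the uniqueness for \eqref{4.13} established in the proof of Proposition \ref{lem3.4}.
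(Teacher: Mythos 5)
Your proposal is correct and follows essentially the same route as the paper: truncation plus stopping-time localization, the decomposition $u^n=v^n+$(stochastic convolution) so that Proposition \ref{lem3.4} applies pathwise with the random forcing $\varphi=\varphi^n$, the $n$-uniform moment bound on the convolution via Hypothesis \ref{H1} and Lemma \ref{lemma3.5}, Markov/Chebyshev to rule out explosion, and uniqueness through Corollary \ref{cor3.4}. The only differences are cosmetic bookkeeping (Jensen to pass between moment orders, and the explicit identification of the localized shift with the global solution of \eqref{4.13}), which the paper handles implicitly.
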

\begin{proof}
{Let us first prove  that Proposition \ref{theorem4.2} and Corollary \ref{cor3.4}   provide a proof of the uniqueness of mild solution to \eqref{1.01}. Indeed, suppose that $u_1$ and $u_2$  are  two mild solutions to \eqref{1.01} in the interval $[0,T]$. One can easily see that $u_1$ and	$u_2$ are solutions of the problem \eqref{4.05}  in the stochastic interval $[0,\tau^n\wedge\sigma^n]$, for	every positive number $n$, where 
	\begin{align*}
		\tau^n:=\inf_{t\geq 0}\left\{t:\|u_1(t)\|_{\L^p}\geq n\right\}\wedge T\ \mbox{ and }\ \sigma^n:=\inf_{t\geq 0}\left\{t:\|u_2(t)\|_{\L^p}\geq n\right\}\wedge T. 
	\end{align*}
 Corollary \ref{cor3.4}  implies that $u_1(t)=u_2(t)$ in the stochastic interval $[0,\tau^n\wedge\sigma^n]$. Therefore, 
 we deduce that almost surely $u_1(t,x) = u_2(t,x) $ for a.e. $x\in[0,1] $,  for all $t\in[0,T]$, since $\mathbb{P}\left(\tau^n\wedge\sigma^n<T\right)\to 0$ as $n\to\infty$.  
 }

In order to prove global existence, we follow similar steps as in section 3, \cite{GDDG} and proof of Theorem 2.3, \cite{IGDN}.   For  the existence of a mild solution to \eqref{1.01},  we first consider the solution $u^n(\cdot,\cdot)$ of the truncated integral equation \eqref{4.05} for any $n\geq0$. Define a sequence of stopping times
\begin{align*}
	\tau^n:=\inf_{t\geq 0}\{t:\|u^n(t)\|_{\L^p}\geq n\}\wedge T,
\end{align*}
for $n\in\mathbb{N}$. {Since $u^n(t,\cdot)$ is the unique mild solution of \eqref{4.02}, for any $t\in[0,\tau^n]$ (Corollary \ref{cor3.4}) ,}  it follows that  for $m\geq n$,  $\tau^n\leq \tau^m$ and $u^m(t,\cdot) = u^n(t,\cdot), $  for $t\leq \tau^n$. Therefore we can define a process $u$ by setting $u(t)=u^n(t)$ for $t\in[0,\tau^n]$. Let us set $\tau^\infty=\sup\limits_{n\geq 0}\tau^n$. Then Proposition \ref{theorem4.2}  tells us that we have constructed a solution to \eqref{4.05}  in the random interval $[0, \tau^{\infty})$, and it is unique by Corollary \ref{cor3.4}. To conclude, we just need to show that $\tau^{\infty}=T$, $\mathbb{P}$-a.s., that is, $\mathbb{P}(\tau^{\infty}<T)=0$.

Let us consider  the stochastic process
\begin{align}\label{4.11}
	\varphi(t,x)= \int_{0}^{t}\int_{0}^{1}G(t-s,x,y)g(s,y,u(s,y))\chi_{\{t<\tau^{\infty}\}}\W(\d s,\d y), 
\end{align}where the function $g(\cdot,\cdot,\cdot)$ satisfies the Hypothesis \ref{H1} and $(u(\cdot),\tau^{\infty})$ is the solution to the integral equation \eqref{4.02}. Also for any $t\leq \tau^n,$ we have 
\begin{align*}
\varphi(t,x)=\varphi^n(t,x)=\int_{0}^{t}\int_{0}^{1}G(t-s,x,y)g(s,y,u^n(s,y))\W(\d s,\d y).
\end{align*} Since $g(\cdot,\cdot,\cdot)$  is uniformly bounded in $n$ (Hypothesis \ref{H1}), 
using Corollary 4.3, \cite{IG}, for every $p\geq 1, T>0$ there exists a constant $C>0$ such that 
   \begin{align*}
   	\E\big(|\varphi(t,x)-\varphi(s,y)|^{2p}\big) \leq C\big(|t-s|^{\frac{1}{4}}+|x-y|^{\frac{1}{2}}\big)^{2p},
   \end{align*}for all $s,t\in[0,T]$ and $x,y\in[0,1]$. Hence by Kolmogorov's continuity theorem  (see \cite{JBW}), there exists a version of $\varphi(\cdot,\cdot)$ with $\P$-a.s. trajectories being H\"older continuous  with exponents smaller than $\frac{1}{4}-\frac{1}{2p}$ (in time)  and $\frac{1}{2}-\frac{1}{2p}$ (in space). From Lemma \ref{lemma3.5}, we infer that if $p\geq 2$ and $q>4$
\begin{align}\label{366}
\sup_{n\geq 1}	\E\left[\sup_{t\in[0,T]}\|\varphi^n(t)\|_{\L^p}^q\right]\leq C\sup_{n\geq 1}\int_0^T\E\left[\|\varphi^n(t)\|_{\L^p}^q\right]\d t\leq CK^qT,
\end{align}
by using  Hypothesis \ref{H1}. 

 Let us now show that $\tau^\infty=T,$ $\mathbb{P}$-a.s., or  in other words $\P\left\{\omega\in\Omega:\tau^\infty(\omega)=T\right\}=1$. Let $u(\cdot,\cdot)$ be a (possibly exploding) solution  of \eqref{1.01} and set $v(\cdot,\cdot)=u(\cdot,\cdot)-\varphi(\cdot,\cdot)$. Since $\varphi$ satisfies the assumptions of Proposition \ref{lem3.4}, then $v(\cdot,\cdot)$ verifies the integral equation given in \eqref{4.13}.    From the estimates \eqref{3.13} and  \eqref{366}, we have 
 \begin{align*}
 \E\left(\sup_{t\in[0,T]}\|u^n(t)\|_{\L^p}^p\right)&\leq 2^{p-1}\left[\|u_0\|_{\L^p}^p+pK_1T+pK_2T\E\left(\sup_{t\in[0,T]}\|\varphi^n(t)\|_{\L^{p(\delta+1)}}^{p(\delta+1)}\right)\right.\nonumber\\&\qquad\left.+pK_3T\E\left(\sup_{t\in[0,T]}\|\varphi^n(t)\|_{\L^{p+2\delta}}^{p+2\delta}\right)+\E\left(\sup_{t\in[0,T]}\|\varphi^n(t)\|_{\L^p}^p\right)\right]\nonumber\\&\leq C\left(\|u_0\|_{\L^p}^p+1\right),
 \end{align*} for all $p\geq 2\delta+1$. Since $\tau^n\leq \tau^{\infty}$, we have 
\begin{align*}
\P(\tau^{\infty}<T)\leq \P(\tau^n\;{<}\;T) = \P \left(\sup_{t\in[0,T]}\|u^n(t)\|_{\L^p}^p \geq n^p\right),
\end{align*}  and an application of Markov's inequality yields 
\begin{align*}
\P(\tau^n\;{<}\; T) \leq \frac{1}{n^p}\E\left(\sup_{t\in[0,T]}\|u^n(t)\|_{\L^p}^p\right)  \leq \frac{C\left(\|u_0\|_{\L^p}^p+1\right)}{n^p},
\end{align*}for some constant $C=C(\alpha,\beta,\gamma,\delta,\nu,p,T,K)$. Taking $n\to\infty$, we obtain $\tau^\infty=T,\;\P$-a.s. Hence, we obtain the global existence  and uniqueness of the mild solution of the integral equation \eqref{4.02}.
\end{proof}
\begin{remark}
	Using the existence and uniqueness of solution and Lemmas \ref{lemma3.3}, \ref{lemma3.5}, we conclude that $u(\cdot)$ (solution of SGBH equation \eqref{1.01}) has a version having continuous paths on $(0,T]$ with values in $\W^{\frac{1+\e}{p},p}(\1),$ for each $p> \max\big\{6,2\delta+1\big\}$ and $0<\e<\frac{p}{2}-3$.
\end{remark}

	\section{Comparison Theorem}\label{sec5}\setcounter{equation}{0}
	In this section, we discuss a comparison result, which plays  an important role in the proof of density for solutions of the system \eqref{1.01}-\eqref{1.1c}. A positivity result for the solutions of deterministic GBH equation has been obtained in Theorem 3.4, \cite{MTMAK}. A good number of works are available in the literature regarding comparison theorem for several SPDEs, we refer  the readers to\cite{MP,IG,CGRM,PK,RMCS}, etc. and the references therein.

	\begin{theorem}[Comparison theorem]\label{theorem5.2} Assume that two initial data $u_0,v_0\in \L^p(\1)$, for $p\geq2\delta+1$ with the condition $u_0(x)\leq v_0(x)$ for a.e. $x\in[0,1]$. Denote the unique solution $u(t,x)$ (resp. $v(t,x)$) of the SGBH equation  \eqref{1.01} corresponding to the initial data $u_0(x)$ (resp. $v_0(x)$) in the interval $[0,T]$. Then almost surely $u(t,x)\leq v(t,x),$ for all $t\in[0,T]$ for a.e. $x\in[0,1]$. 
	\end{theorem}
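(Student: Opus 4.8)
The plan is to adapt the strategy of \cite{IG,MP}: mollify the noise, prove the comparison for the regularised problems by an energy inequality for the positive part of the difference, and then remove the regularisations. Since, by Theorem \ref{theorem4.3} and the Gronwall argument already used in Step~1 of the proof of Proposition \ref{lem3.4}, the solution depends continuously on the initial datum in $\L^p(\1)$, it suffices to prove the statement when $u_0,v_0\in\C([0,1])$ with $u_0\le v_0$; the general case follows by approximating $u_0,v_0$ in $\L^p(\1)$ by mollifications $u_0^k\le v_0^k$ (mollification by a nonnegative kernel preserves the a.e.\ ordering) and passing to the limit. For continuous data, $u$ and $v$ have continuous trajectories in $\W^{\frac{1+\e}{p},p}(\1)\hookrightarrow\L^{\infty}(\1)$ for $p>\max\{6,2\delta+1\}$, which is the a priori bound that lets us handle the polynomial drift. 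Next, replace the Brownian sheet by a spatial mollification $\mathbf{W}^{\e}$, so that \eqref{4.01} becomes driven by a noise white in time and smooth in space; denote the corresponding global solutions by $u^{\e},v^{\e}$, and note that, arguing as in \cite{IG,IGDN}, $u^{\e}\to u$ and $v^{\e}\to v$ in probability in $\C([0,T];\L^p(\1))$ as $\e\to0$.

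\emph{Comparison for the regularised problem.} Fix $\e$ and set $w:=u^{\e}-v^{\e}$; because $\mathbf{W}^{\e}$ is smooth in space, $w(\cdot,x)$ is a continuous semimartingale for each $x$, and subtracting the two equations in weak form (the formulation equivalent to \eqref{4.01}, cf.\ the discussion around \eqref{316}) gives
\begin{align*}
	\partial_t w = \nu\,\partial_x^2 w-\frac{\alpha}{\delta+1}\partial_x\big(\p(u^{\e})-\p(v^{\e})\big)+\beta\big(\c(u^{\e})-\c(v^{\e})\big)+\big(g(\cdot,u^{\e})-g(\cdot,v^{\e})\big)\dot{\mathbf{W}}^{\e},
\end{align*}
with $w(0)=u_0-v_0\le0$ and $w(t,0)=w(t,1)=0$. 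Let $\tau_N:=\inf\{t>0:\|u^{\e}(t)\|_{\L^\infty}\vee\|v^{\e}(t)\|_{\L^\infty}>N\}\wedge T$; then $\tau_N>0$ and $\tau_N\uparrow T$ a.s. Choose a convex $\psi_n\in\C^2(\R)$ with $\psi_n\equiv0$ on $(-\infty,0]$, $\psi_n'\ge0$, $\psi_n(r)\uparrow(r^+)^2$, and $r\psi_n'(r)+r^2\psi_n''(r)\le C\psi_n(r)$. Applying Itô's formula to $t\mapsto\int_0^1\psi_n(w(t\wedge\tau_N,x))\d x$ and taking expectation kills the martingale term; the viscous term gives $-\nu\,\E\int_0^{t\wedge\tau_N}\!\int_0^1\psi_n''(w)|\partial_x w|^2\d x\d s\le0$ after an integration by parts (boundary terms vanish since $\psi_n'(0)=0$). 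Writing $\p(u^{\e})-\p(v^{\e})=aw$, $\c(u^{\e})-\c(v^{\e})=bw$ and $g(\cdot,u^{\e})-g(\cdot,v^{\e})=hw$, where $|a|,|b|\le C(N)$ on $[0,\tau_N]$ by Taylor's formula and $|h|\le L$ by Hypothesis \ref{H1}, the advection term, after one further integration by parts and Young's inequality, is at most $\tfrac{\nu}{2}\E\int\!\int\psi_n''(w)|\partial_x w|^2+C(N)\E\int\!\int\psi_n''(w)w^2$, the first half being absorbed by the viscous term, while the reaction term and the Itô correction of the noise are bounded by $C(N,\e)\big(\E\int\!\int\psi_n'(w)|w|+\E\int\!\int\psi_n''(w)w^2\big)$. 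Using $r\psi_n'(r)+r^2\psi_n''(r)\le C\psi_n(r)$ and $\psi_n(w(0))=0$ (because $u_0\le v_0$), we arrive at $\E\int_0^1\psi_n(w(t\wedge\tau_N,x))\d x\le C(N,\e)\int_0^t\E\int_0^1\psi_n(w(s\wedge\tau_N,x))\d x\d s$, whence by Gronwall's inequality $\E\int_0^1\psi_n(w(t\wedge\tau_N,x))\d x=0$ for every $t$. The blow-up of $C(N,\e)$ as $\e\to0$, caused by the concentration of the spatial covariance of $\mathbf{W}^{\e}$, is harmless here since the right-hand side is exactly zero.

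\emph{Removing the regularisations.} Letting $n\to\infty$ gives $(w(t\wedge\tau_N,x))^+=0$ for a.e.\ $x$, $\mathbb{P}$-a.s.; letting $N\to\infty$ gives $u^{\e}(t,x)\le v^{\e}(t,x)$ for all $t\in[0,T]$ and a.e.\ $x$, $\mathbb{P}$-a.s.; letting $\e\to0$ along the convergence $u^{\e}\to u$, $v^{\e}\to v$ in probability gives $u(t,x)\le v(t,x)$ for continuous initial data; and the approximation $u_0^k\le v_0^k$ together with the continuous dependence on the initial datum then yields the full statement. If one prefers, the Itô computation above can be carried out at the level of the Galerkin approximations of \eqref{316}, exactly as in Step~3 of the proof of Proposition \ref{lem3.4}, and then passed to the limit.

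\emph{Main obstacle.} The point where this departs from \cite{IG,MP} is that $\p(u)=u^{\delta+1}$ and $\c(u)$ have polynomial degree $\ge2$ rather than being merely quadratic, so the coefficients $a,b$ produced by Taylor's formula have polynomial growth in $u^{\e},v^{\e}$; controlling them is precisely what forces the localisation by $\tau_N$ and relies on the $\L^{\infty}$ regularity obtained in Section \ref{sec4}. The delicate term is the advection term: one must rewrite $\partial_x(\p(u^{\e})-\p(v^{\e}))$ and integrate by parts against $\psi_n'(w)$ so that the gradient $\partial_x w$ is paired with $\psi_n''(w)$ and can be absorbed into the viscous term; the reaction term and the noise correction are handled by the crude bounds above, and the convergences $u^{\e}\to u$, $v^{\e}\to v$ (as well as the continuous dependence on initial data) are routine, if lengthy, using the estimates of Lemmas \ref{lemma3.1}--\ref{lemma3.2} and \ref{lemma3.5}.
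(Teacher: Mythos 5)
Your fixed-regularisation comparison argument is essentially sound and parallels Step 1 of the paper's proof (the paper achieves the same effect by truncating the nonlinearities with the cutoff $\eta_n$ and projecting the noise onto finitely many modes, rather than by an $\L^{\infty}$-based stopping time $\tau_N$ for a spatially mollified noise; your test functions $\psi_n$ and the absorption of the advection term into the viscous term are the same mechanism as the paper's $\Psi_k$ computation). The genuine gap is in ``removing the regularisations.'' The convergence $u^{\e}\to u$, $v^{\e}\to v$ in probability in $\C([0,T];\L^p(\1))$ as the spatial mollification of the Brownian sheet is removed is asserted by ``arguing as in \cite{IG,IGDN}'', but those references treat quadratic (Burgers-type) drifts only; for the drift of degree $2\delta+1$ this stability is precisely the hard content of the theorem, and it is what Steps 2 and 3 of the paper's proof supply: uniform-in-$n$ bounds in probability on $\sup_{t}\|u_n(t)\|_{\L^p}$ obtained through the decomposition $u_n=\v_n+\z_n$ and the energy estimate of Proposition \ref{lem3.4}, tightness in $\C([0,T];\L^p(\1))$ via Lemmas \ref{lemma3.1}, \ref{lemma3.2} and \ref{lemma3.6}, passage to the limit in the higher-order nonlinear terms via Lemma \ref{lemma5.1}, and identification of the limit through Skorokhod's representation theorem together with the Gy\"ongy--Krylov uniqueness argument (Lemma 4.1 of \cite{IG}). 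None of this can be quoted off the shelf for your mollified scheme; without it your proof establishes the comparison only for the regularised equations.

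Two further points need attention. First, your preliminary reduction to continuous initial data rests on ``continuous dependence on the initial datum in $\L^p(\1)$'' for the stochastic equation, but Step 1 of Proposition \ref{lem3.4} gives this only for the transformed deterministic equation with a fixed $\varphi$; for the SPDE the Gronwall constant involves norms of the solutions themselves, so this continuity requires its own localised argument (the paper avoids the issue entirely, since its truncated-drift comparison works for arbitrary $\L^p$ data). Second, well-posedness and the $\L^{\infty}$-regularity (continuous $\W^{\frac{1+\e}{p},p}$-valued paths) of the mollified-noise problem, which your stopping times $\tau_N$ rely on, are not established in the paper for that equation and would have to be proved, even if this is the easier part. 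As written, the proposal defers the substantive analytic work of the theorem to citations that do not cover the present nonlinearity.
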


Before going to the proof of comparison theorem, we first establish the following technical lemma, a variant of  Lemma 4.6, \cite{IG} and for completeness, we provide a proof here:
	\begin{lemma}\label{lemma5.1}
	Let $h_n=\{h_n(r):r\in\R\}$ be any  random field for every integer $n\geq 1$ such that for every $M\geq 0$ 
	\begin{align}\label{5.001}
		\int_{0}^{T}\int_{0}^{1}\sup_{|r|\leq M} |h_n(r)-h(r)|\d x\d t \to 0,  \ \P\text{-a.s.},
	\end{align} as $n\to\infty$, where $h$ is some random field.  Assume moreover that for some constant $C$
	\begin{align*}
		|h_n(r)|+|h(r)|& \leq C\big(|r|+|r|^{\delta+1}+|r|^{2\delta+1}\big),\\
		|h_n(r)-h_n(p)| &\leq C\big(1+(|r|^\delta+|p|^\delta)+(|r|^{2\delta}+|p|^{2\delta})\big)|r-p|,
	\end{align*}for $r,p\in\R$.
	
	Let $u_n=\{u_n(t,x):(t,x)\in[0,T]\times[0,1] \},$ $n\geq 1$, be a random field such that $\P$-a.s.,
	\begin{equation}\label{5.002}
		\left\{
		\begin{aligned}
			u_n&\xrightharpoonup{\ast}u\ \text{ in }\ \L^{\infty}(0,T;\L^2(\1)),\\
			u_n&\xrightharpoonup{}u\ \text{ in }\ \L^{2}(0,T;\H_0^1(\1)),\\
			u_n&\xrightharpoonup{}u\ \text{ in }\ \L^{2(\delta+1)}(0,T;\L^{2(\delta+1)}(\1)).
		\end{aligned}\right.
	\end{equation}
		Then, we have 
	\begin{align*}
		I:=	\int_{0}^{T}\int_{0}^{1}|h_n(u_n(t,x))-h(u(t,x))|\d x\d t \to 0, \ \P\text{-a.s.},\ \text{ as }\ n\to\infty. 
	\end{align*}
\end{lemma}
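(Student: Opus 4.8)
The strategy is to split the integrand as
\begin{align*}
|h_n(u_n(t,x))-h(u(t,x))| \leq |h_n(u_n(t,x))-h(u_n(t,x))| + |h(u_n(t,x))-h(u(t,x))|,
\end{align*}
and control each piece separately, using a truncation of the amplitude of $u_n$ at a large level $M$. First I would fix $M>0$ and decompose $[0,T]\times[0,1]$ into the set $E_M^n$ where $|u_n(t,x)|\leq M$ and its complement. On $E_M^n$, the first term is bounded by $\sup_{|r|\leq M}|h_n(r)-h(r)|$, so its contribution tends to $0$ by hypothesis \eqref{5.001}. For the second term on $E_M^n$, one uses the local Lipschitz bound on $h$ together with the (strong) $\L^2$-convergence $u_n\to u$ — which is not listed in \eqref{5.002} but, as in the proof of Proposition \ref{lem3.4}, follows from the weak convergences in \eqref{5.002} via the Aubin–Lions lemma, yielding $u_n\to u$ in $\L^2(0,T;\L^2(\1))$ and a.e. convergence along a subsequence — to conclude via dominated convergence (the a.e. convergence plus the uniform-in-$n$ growth bound giving an integrable dominating function on bounded sets).

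The main obstacle is the tail set where $|u_n(t,x)|>M$. Here one cannot use the Lipschitz estimate directly; instead one estimates
\begin{align*}
\int_0^T\int_0^1 \chi_{\{|u_n|>M\}}\big(|h_n(u_n)|+|h(u_n)|\big)\d x\d t \leq C\int_0^T\int_0^1 \chi_{\{|u_n|>M\}}\big(|u_n|+|u_n|^{\delta+1}+|u_n|^{2\delta+1}\big)\d x\d t,
\end{align*}
and similarly for the term $|h(u_n)-h(u)|$ on this set via the growth bounds on $h$ applied to $u_n$ and to $u$. The point is that the right-hand side is small, uniformly in $n$, once $M$ is large: indeed, since $u_n$ is bounded in $\L^\infty(0,T;\L^2(\1))$ and in $\L^{2(\delta+1)}(0,T;\L^{2(\delta+1)}(\1))$, $\P$-a.s., by interpolation $u_n$ is bounded in $\L^{2\delta+1}(0,T;\L^{2\delta+1}(\1))$ and even in $\L^{q}$ for $q$ slightly larger than $2\delta+1$ (using the $\L^\infty_t\L^2_x$ bound to gain integrability), so a Chebyshev-type argument shows
\begin{align*}
\sup_n \int_0^T\int_0^1 \chi_{\{|u_n|>M\}}|u_n|^{2\delta+1}\d x\d t \to 0 \quad\text{as}\quad M\to\infty,
\end{align*}
$\P$-a.s.; the same reasoning applies to the lower-order powers and to the corresponding integral involving $u$.

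Putting these together: given $\epsilon>0$, choose $M$ so large that all the tail contributions (for every $n$, and for the limit $u$) are below $\epsilon$; then for that fixed $M$, let $n\to\infty$ so that the two ``bounded-amplitude'' contributions go to $0$ by \eqref{5.001} and by dominated convergence respectively. Hence $\limsup_{n\to\infty} I \leq C\epsilon$, and since $\epsilon$ is arbitrary, $I\to 0$, $\P$-a.s. I would remark that all constants and all convergences here are pathwise (depending on $\omega$ through the a.s. bounds on the norms of $u_n$), which is exactly what the statement asserts; the subsequence extracted for a.e. convergence is handled by the standard subsequence principle, since the limit $I=0$ is independent of the subsequence.
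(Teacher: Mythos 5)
Your proposal is correct and follows essentially the same route as the paper's proof: the same splitting into $|h_n(u_n)-h(u_n)|$ and $|h(u_n)-h(u)|$, truncation at the level $M$ with hypothesis \eqref{5.001} handling the bounded-amplitude part, the growth bounds handling the tail $\{|u_n|>M\}$, and the Aubin--Lions lemma upgrading \eqref{5.002} to strong convergence in $\L^2(0,T;\L^2(\1))$ for the second term. The only differences are cosmetic: you treat $|h(u_n)-h(u)|$ by a.e.\ convergence plus dominated convergence and are more explicit about the uniformity in $n$ of the tail estimate (which the paper handles somewhat loosely by letting $M\to\infty$ at fixed $n$), whereas the paper bounds $\int_0^T\int_0^1|h(u_n)-h(u)|\d x\d t$ over the whole domain directly via H\"older's inequality and the pathwise norm bounds.
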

\begin{proof}
	One can easily see that 
	\begin{align}\label{5.003}\nonumber
		I&\leq   \int_{0}^{T}\int_{0}^{1} \chi_{\{|u_n|\leq M\}}|h_n(u_n)-h(u_n)|\d x\d t+ \int_{0}^{T}\int_{0}^{1} \chi_{\{|u_n|> M\}}|h_n(u_n)-h(u_n)|\d x\d t \\&\quad+\int_{0}^{T}\int_{0}^{1}|h(u_n)-h(u)|\d x \d t.
	\end{align}For the penultimate term in the right hand side of above inequality, we compute 
	\begin{align*}
		I_1:&=	\int_{0}^{T}\int_{0}^{1} \chi_{\{|u_n|> M\}}|h_n(u_n)-h(u_n)|\d x\d t \\&\leq  \int_{0}^{T}\big(|u_n|+|u_n|^{\delta+1}+|u_n|^{2\delta+1}\big)\d x\d t<+\infty,
	\end{align*}
and $\lim\limits_{M\to\infty}\chi_{\{|u_n|> M\}}=0$. Since $M>0$ is arbitrary, an application of DCT  yields that $I_1\to 0$ as $M\to\infty$. Since the embedding of $\H_0^1(\1)\subset\L^2(\1)$ is compact, the convergence given in \eqref{5.002} and an application of Aubin-Lions compactness lemma imply that 
\begin{align*}
	u_n\to u\ \text{ in }\ \mathrm{L}^2(0,T;\L^2(\1)), \ \mathbb{P}\text{-a.s.}
	\end{align*}
Using the assumptions on $h(\cdot)$  and H\"older's inequality in the final term, we obtain 
	\begin{align*}
		&\int_{0}^{T}\int_{0}^{1} |h(u_n)-h(u)|\d x\d t \\& \leq C\int_{0}^{T}\left\{\big(\|u_n\|_{\L^{2\delta}}+\|u\|_{\L^{2\delta}}\big)^\delta\|u_n-u\|_{\L^2}+\|u_n-u\|_{\L^2}\right.\\&\qquad\left.+\big(\|u_n\|_{\L^{2(\delta+1)}}+\|u\|_{\L^{2(\delta+1)}}\big)^{2\delta} \|u_n-u\|_{\L^2}^{\frac{1}{\delta}}\|u_n-u\|_{\L^{2(\delta+1)}}^{\frac{\delta-1}{\delta}}  \right\}\d t \\& \leq  C\bigg\{\bigg[\bigg(\int_{0}^{T}\big(\|u_n\|_{\L^{2\delta}}+\|u\|_{\L^{2\delta}}\big)^{2\delta} \d t\bigg)^{\frac{1}{2}}+T^{\frac{1}{2}}\bigg]\bigg(\int_{0}^{T}\|u_n-u\|_{\L^2}^2\d t\bigg)^{\frac{1}{2}}\\&\qquad+\bigg(\int_{0}^{T}\|u_n-u\|_{\L^2}^2\d t\bigg)^{\frac{1}{2\delta}}\bigg(\int_{0}^{T} \big(\|u_n\|_{\L^{2(\delta+1)}}^{2(\delta+1)}+\|u\|_{\L^{2(\delta+1)}}^{2(\delta+1)}\big)\d t\bigg)^{\frac{2\delta-1}{2\delta}}\bigg\}.
	\end{align*}
	By using the above estimate, \eqref{5.001}  and \eqref{5.002} in \eqref{5.003}, one can complete the proof. 
\end{proof}

	\begin{proof}[Proof of Theorem \ref{theorem5.2} ] Proof of this Theorem is divided in the following steps.
		\vskip 0.2 cm
		\noindent 
 {\bf Step 1:}
		By  Theorem \ref{theorem4.3}, SGBH equation \eqref{1.01} with the initial conditions $u_0$ and $v_0$ admits  unique mild solutions $u$ and $v,$ respectively, which belongs to the space $\C([0,T];\L^p(\1))$ for $p\geq  2\delta+1$.
	
	Let us define a cutoff function for any $x\in\R$ by
	\begin{equation}\label{53}
		\eta_n(x)=\begin{cases}
			1, &\text{ if }\ x\leq n,\\
			n+1-x, &\text{ if } \ n<x\leq n+1\\
			0, &\text{ if }\  x> n+1.
		\end{cases},
	\end{equation} Define  nonlinear operators with the help of above cutoff function as 
\begin{align}\label{5.01}
\p_n(u)=\eta_n(|u|)\p(u) \ \text{ and }\  \c_n(u)=\eta_n(|u|)\c(u).
\end{align}
		Let $\{\phi_k\}$ be an orthonormal basis of $\L^2(\1)$ such that $\phi_k$ is bounded uniformly for $k\geq 1$. Define \begin{align*}	
		\W^k(t) := \int_{0}^{t}\int_{0}^{1}\phi_k(x)\W( \d s, \d x),\end{align*}where $\W(\cdot,\cdot)$ is an $\mathscr{F}_t$-adapted Brownian sheet. Then $\W^k(\cdot)$, $k\geq 1$ are independent $\mathscr{F}_t$-adapted Wiener processes.
		Fix $n$ and consider the evolution equation 
		\begin{align}\label{5.02}
			\d u_n (t)= A^n(t,u_n(t))\d t + \sum_{k=1}^{n}\mathcal{B}^k(t,u_n(t))\d \W^k(t), \; u_n(0)=u_0,
		\end{align}in the triplet $\H_0^1(\1)\hookrightarrow \L^2(\1)\hookrightarrow\H^{-1}(\1)$. The nonlinear operators $A^n$ and $\mathcal{B}^k$  form $\H_0^1(\1)$ into $\H^{-1}(\1)$ and into $\L^2(\1)$, respectively, are defined by 
		\begin{align*}
			\langle A^n(t,\psi),\phi \rangle &:= -\nu \int_{0}^{1}\psi'(x)\phi'(x)\d x +\beta\int_{0}^{1}\c_n(\psi)(t,x)\phi(x)\d x \\&\quad+\frac{\alpha}{\delta+1}\int_{0}^{1}\p_n(\psi)(t,x)\phi'(x)\d x, 	\\
			(\mathcal{B}^k(t,\psi),h) &:= \int_{0}^{1}g(t,x,\psi(x))\phi_k(x)h(x)\d x,
		\end{align*}for all $\psi,\phi\in\H_0^1(\1),\; h\in \L^2(\1)$. Here $\langle\cdot,\cdot\rangle$ and $(\cdot,\cdot)$ denote the duality pairing between $\H_0^1(\1)$ and $\H^{-1}(\1)$ and scalar product in $\L^2(\1),$ respectively. Then $u_n(\cdot)$ is called a mild solution of \eqref{5.02} if it satisfies the following integral equation:\begin{align*}
		u_n(t)&=G(t,x;u_0)+\frac{\alpha}{\delta+1}\int_{0}^{t}\int_{0}^{1}\frac{\partial G}{\partial y}(t-s,x,y)\p_n(u_n(s,y))\d y\d s \\&\quad+\beta\int_{0}^{t}\int_{0}^{1}G(t-s,x,y)\c_n(u_n(s,y))\d y\d s \\&\quad+\sum_{k=1}^{n}\int_{0}^{t}\int_{0}^{1}G(t-s,x,y)g(s,y,u_n(s,y))\phi_k(y)\d y\d \W^k( s),
	\end{align*} $\P$-a.s., for all $t\in[0,T]$. Using fixed point arguments, as we have done in Theorem \ref{theorem4.3},  one can obtain the unique mild solution $u_n \in \C([0,T];\L^p(\1)),$ for $p\geq 2\delta+1$ of the above integral equation.

	Let $v_n$ denote the solution of \eqref{5.02} with the initial data $v_0$ in place of $u_0$. Setting $w_n=u_n-v_n$, our aim is to show that $\mathbb{P}$-a.s., for all $ t\in [0,T]$, 
		\begin{align}\label{5.03}
			|w_n(t,x)|_+ =0, \text{ for a.e. } x \in [0,1],
		\end{align}here $|\cdot|_+ = \max \{\cdot,0\}$. In order to show \eqref{5.03}, the ideas have been borrowed   from the works \cite{MP,IG}, especially the construction of a functional $\Psi$ defined below (cf. \cite{MP}) and the final part of the proof (cf. \cite{IG}). For every integer $k\geq 1$, we define a functional $\Psi:\L^2(\1)\to \R$ by $\Psi_k(h):=\int_{0}^{1}\psi_k(h(x))\d x $, where
		\begin{align*}
			\psi_k(x):= \chi_{\{x\geq 0\}} \int_{0}^{x}\int_{0}^{y}\rho_k(z)\d z \d y, 
		\end{align*} and 
		\begin{equation*}\rho_k(z):=	\left\{
			\begin{aligned}
				&2kz, &&\text{ for } z\in \bigg[0,\frac{1}{k}\bigg],\\
				&2\chi_{\{z\geq 0\}}, &&\text{ for } z\notin \bigg[0,\frac{1}{k}\bigg].	 
			\end{aligned}
			\right.
		\end{equation*}
	Then, one can show that (cf. \cite{MP})
	\begin{itemize}
		\item [$\bullet$] $\psi_k \in \C^2(\R)$,
		\item [$\bullet$] $0 \leq \psi_k'(x)\leq 2|x|_+$ \ and \ $0 \leq \psi''_k(x)\leq 2 \chi_{\{x\geq 0\}}$, 
		\item [$\bullet$] $\psi_k(x)\to |x|_+^2$\  for \ $k\to \infty$. 
		\end{itemize}
		Also the function $\Psi_k$ is twice Fr\'echet differentiable at every $h \in \L^2(\1)$, the first derivative $\Psi'_k(h)$ is a continuous linear functional on $\L^2(\1)$ and the second derivative $\Psi''_k(h)$ is a continuous symmetric bilinear form on $\L^2(\1)\times \L^2(\1)$ given by  
		\begin{align*}
			(\Psi'_k(h),h_1)= \int_{0}^{1}\psi'_k(h(x))h_1(x)\d x \ \text{ and } \ \Psi''_k(h)(h_1,h_2)= \int_{0}^{1}\psi''_k(h(x))h_1(x)h_1(x)\d x, 
		\end{align*}respectively for $h_1,h_2\in \L^2(\1)$.
	
		Applying It\^o's formula, we find
		\begin{align}\label{5.04}
			\Psi_k(w_n(t))= \Psi_k(w_n(0))+\int_{0}^{t}\mathtt{A}(s)\d s +\frac{1}{2}\int_{0}^{t}\mathtt{B}(s)\d s +M_{n}(t),
		\end{align}where \begin{align*}
			\mathtt{A}(s)&:= \langle A^n(s,u_n)-A^n(s,v_n), \psi_k' (w_n(s))\rangle,    \\
			\mathtt{B}(s)&:= \sum_{i=1}^{n}\big(\psi''_k(w_n(s))(g(u_n(s))-g(v_n(s)))\phi_i,(g(u_n(s))-g(v_n(s))\phi_i)\big),
		\end{align*} and $M_n(t)$ is a continuous local martingale starting from zero given by 
		\begin{align*}
			M_n(t)=\sum_{i=1}^{n}\int_{0}^{t}\big(\psi_k'(w_n(s)),(g(u_n(s))-g(v_n(s)))\phi_i\big)\d \W^i(s).\end{align*}  One can re-write $\mathtt{A}$  as
		\begin{align}\label{5.05}
			\mathtt{A}(s)= -	\nu\mathcal{A}^{(1)}(s)+\beta \mathcal{A}^{(2)}(s)+\frac{\alpha}{\delta+1}	\mathcal{A}^{(3)}(s),
		\end{align}where \begin{align}\label{5.06} \nonumber
			\mathcal{A}^{(1)}(s)&:= \bigg(\frac{\partial}{\partial x}w_n(s,x),\frac{\partial}{\partial x}(\psi'_k(w_n(s,x)))\bigg) \\&\;\nonumber= \int_{0}^{1}\bigg(\frac{\partial}{\partial x}w_n(s,x)\bigg)^2 \psi''_k(w_n(s,x))\d x \geq 0,\\ \nonumber
			\mathcal{A}^{(2)}(s) &:=\big((\c_n(u_n)-\c_n(v_n))(s,x),\psi_k'(w_n(s,x))\big) \nonumber\\&\; =
			\int_{0}^{1}  \big((\c_n(u_n)-\c_n(v_n))(s,x)\big)\psi_k'(w_n(s,x))\d x,
			\nonumber	\\ 
			\mathcal{A}^{(3)}(s)&:= \bigg((\p_n(u_n)-\p_n(v_n))(s,x),\frac{\partial }{\partial x}\psi'_k(w_n(s,x))\bigg) \nonumber\\& \;
			=\int_{0}^{1}(\p_n(u_n)-\p_n(v_n))(s,x)\frac{\partial}{\partial x}w_n(s,x) \psi''_k(w_n(s,x))\d x,
		\end{align}
for a.e. $s\in[0,T]$.	Without loss of generality, one may assume that $|v_n|\leq |u_n|$. Using the definition of the cutoff function given in \eqref{53},  we divide the further calculations into the following six cases. 
	\vskip 0.2 cm
	\noindent 
		{\bf Case 1:} If $|u_n|\leq n$ and $|v_n|\leq n,$ then $\p_n(\cdot)=\p(\cdot)$ and $\c_n(\cdot)=\c(\cdot)$. 
	\begin{align*}
		\mathcal{A}^{(2)}&= \int_{0}^{1}\left[ (1+\gamma)(u_n^{\delta+1}-v_n^{\delta+1})(x)-\gamma(u_n-v_n)(x)-(u_n^{2\delta+1}-v_n^{2\delta+1})(x)\right]\psi_k'(w_n(x))\d x.
	\end{align*}
	Using Taylor's formula and boundedness of $\psi_k'$, we estimate the above term as 
	\begin{align}\label{5.07}\nonumber
		\mathcal{A}^{(2)} &\leq 2\int_{0}^{1}\left[2^{\delta}n^{\delta}(\gamma+1)(\delta+1)|(u_n-v_n)(x)|+\gamma|(u_n-v_n)(x)|\right.\nonumber\\&\qquad\left.+2^{2\delta} n^{2\delta}(2\delta+1)|(u_n-v_n)(x)|\right]|(u_n-v_n)(x)|_+\d x 
		\nonumber\\&\leq 2\big(2^{\delta}n^{\delta}(\gamma+1)(\delta+1)+\gamma+2^{2\delta} n^{2\delta}(2\delta+1)\big)\int_{0}^{1}|w_n(x)|_+^2\d x.
	\end{align}Again, using Taylor's formula, Young's inequality and boundedness of $\psi_k''$, we estimate the term $\mathcal{A}^{(3)}$ as
	\begin{align}\label{5.08}\nonumber
		\mathcal{A}^{(3)} & \leq 2^{\delta} n^{\delta}(\delta+1)\int_{0}^{1}|(u_n-v_n)(x)|\psi_k''(w_n(x))\bigg|\frac{\partial}{\partial x}(w_n(x))\bigg|\d x\\& \leq 
		\frac{\nu}{4}\int_{0}^{1}\psi_k''(w_n(x))\bigg(\frac{\partial}{\partial x}(w_n(x))\bigg)^2\d x+\frac{2^{2\delta+1}n^{2\delta}(\delta+1)^2}{\nu}\int_{0}^{1}|w_n(x)|_+^2\d x.
	\end{align}
	\vskip 0.2 cm
	\noindent 
	{\bf Case 2:} If $|u_n|> n+1,\; |v_n|> n+1$ then $\p_n(\cdot)=\c_n(\cdot)=0$. 
	\vskip 0.2 cm
	\noindent 
	{\bf Case 3:} If $n< |u_n|\leq n+1,\; n<|v_n|\leq n+1,$ then $\p_n(\cdot)=(n+1-|\cdot|)\p(\cdot)$ and $\c_n(\cdot)=(n+1-|\cdot|)\c(\cdot)$. Using H\"older's inequality and boundedness of $\psi_k'$ in the term $\mathcal{A}^{(2)}$, we obtain
	\begin{align}\label{5.09} \nonumber
		\mathcal{A}^{(2)}&= \big((n+1-|u_n|)\c(u_n)-(n+1-|v_n|)\c(v_n),\psi_k'(w_n)\big) \\& \nonumber
		= (n+1)\big(\c(u_n)-\c(v_n),\psi_k'(w_n)\big)+\big(-|u_n|\c(u_n)+|v_n|\c(v_n),\psi_k'(w_n)\big) \\&\nonumber 
		= (n+1)\big(\c'(\theta u_n+(1-\theta) v_n)(u_n-v_n),\psi_k'(w_n)\big) - \big((|u_n|-|v_n|)\c(u_n),\psi_k'(w_n)\big)\\&\nonumber\quad -\big(|v_n|\c'(\theta u_n+(1-\theta)v_n)(u_n-v_n),\psi_k'(w_n)\big)
		\\& 
		\leq C(\gamma,\delta,n)\int_{0}^{1}|w_n(x)|_+^2\d x.
	\end{align}Similarly, using Young's inequality and boundness of $\psi_k''$, we compute $\mathcal{A}^{(3)} $ as
	\begin{align}\label{5.10}\nonumber
		\mathcal{A}^{(3)}&=\bigg((n+1-|u_n|)u_n^{\delta+1}-(n+1-|v_n|)v_n^{\delta+1},\psi_k''(w_n)\frac{\partial}{\partial x}w_n\bigg) \\&\nonumber
		= (n+1)\bigg(u_n^{\delta+1}-v_n^{\delta+1},\psi_k''(w_n)\frac{\partial}{\partial x}w_n\bigg)-\bigg((|u_n|-|v_n|)u_n^{\delta+1},\psi_k''(w_n) \frac{\partial}{\partial x}w_n\bigg)\\&\nonumber\quad-\bigg(|v_n|(u_n^{\delta+1}-v_n^{\delta+1}),\psi_k''(w_n)\frac{\partial}{\partial x}w_n\bigg)\\&
		\leq C(\delta,n)\int_{0}^{1}|w_n(x)|_+^2\d x+\frac{\nu}{4}\int_{0}^{1}\psi_k''(w_n(x))\bigg(\frac{\partial}{\partial x}w_n(x)\bigg)^2\d x.
	\end{align}
	\vskip 0.2 cm
	\noindent 
	{\bf Case 4:} If $|u_n|\geq n+1,\; |v_n|\leq n$ then $\p_n(u_n)=\c_n(u_n)=0,\; \p_n(v_n)=\p(v_n)$ and $\c_n(v_n)=\c(v_n)$ also  $1<|u_n|-|v_n|\leq |u_n-v_n|$. A simple calculation as we performed  in the previous parts and bound of $\psi_k'$ help us to obtain
	\begin{align}\label{5.11}\nonumber
		\mathcal{A}^{(2)} &= -\big(\c(v_n),\psi_k'(w_n)\big) \leq \big(|\c(v_n)|,\psi_k'(w_n)\big)  \leq C(\gamma,\delta,n)\int_{0}^{1}1\cdot|u_n-v_n|_+\d x \\& \leq C(\gamma,\delta,n)\int_{0}^{1}|w_n(x)|_+^2\d x.
	\end{align}Again, applying Young's inequality and the bound of $\psi_k''$, we find
	\begin{align}\label{5.12}\nonumber
		\mathcal{A}^{(3)}&= -\bigg(v_n^{\delta+1},\psi_k''(w_n)\frac{\partial}{\partial x}w_n\bigg) \leq C(\delta,n)\int_{0}^{1}1\cdot\psi_k''(w_n)\frac{\partial}{\partial x}w_n\d x \\& \leq C(\nu,\delta,n)\int_{0}^{1}|w_n(x)|_+^2\d x+\frac{\nu}{4}\int_{0}^{1}\psi_k''(w_n(x))\bigg(\frac{\partial}{\partial x}w_n(x)\bigg)^2\d x.
	\end{align}
	\vskip 0.2 cm
	\noindent 
	{\bf Case 5:} If $|u_n|> n+1,\; n< |v_n|\leq n+1,$ then $\p_n(u_n)=\c_n(u_n)=0$, $\p_n(v_n)=(n+1-|v_n|)\p(v_n)$,  $\c_n(v_n)=(n+1-|v_n|)\c(v_n)$ and $1<|u_n|-|v_n|\leq |u_n-v_n|$. A similar calculation as in the case 4 helps us to estimate the term $\mathcal{A}^{(2)}$ as 
	\begin{align}\label{5.13}\nonumber
		\mathcal{A}^{(2)} &= -\big((n+1-|v_n|)\c(v_n),\psi_k'(w_n)\big) \leq C(\gamma,\delta,n) \int_{0}^{1}1\cdot \psi_k'(w_n)\d x \\& \leq C(\gamma,\delta,n)\int_{0}^{1}|w_n(x)|_+^2\d x.
	\end{align}Using Young's inequality and the bound of $\psi_k''$, we estimate $\mathcal{A}^{(3)}$ as
	\begin{align}\label{5.14}\nonumber
		\mathcal{A}^{(3)} &= -\bigg((n+1-|v_n|)v_n^{\delta+1},\psi_k''(w_n)\frac{\partial}{\partial x}w_n\bigg)	 \\& \nonumber\leq \int_{0}^{1}\big|n+1-|v_n|\big||v_n|^{\delta+1}\psi_k''(w_n)\bigg|\frac{\partial}{\partial x}w_n\bigg|\d x \\&\nonumber\leq 
		C(\delta,n)\int_{0}^{1}1\cdot \psi_k''(w_n)\bigg|\frac{\partial}{\partial x}w_n\bigg|\d x \\&
		\leq C(\nu,\delta,n)\int_{0}^{1}|w_n(x)|_+^2\d x+\frac{\nu}{4}\int_{0}^{1}\psi_k''(w_n(x))\bigg(\frac{\partial}{\partial x}w_n(x)\bigg)^2\d x.
	\end{align}
	\vskip 0.2 cm
	\noindent 
	{\bf Case 6:} If $n<|u_n|\leq n+1,\; |v_n|\leq n$ then $\p_n(u_n)=(n+1-|u_n|)\p(u_n),\; \c_n(u_n)=(n+1-|u_n|)\c(u_n)$, $\p_n(v_n)=\p(v_n)$ and $\c_n(v_n)=\c(v_n)$.	Using the conditions on $u_n, v_n$ and  bounds of $\psi_k'$, we find		
	\begin{align}\label{5.15}\nonumber
		\mathcal{A}^{(2)}& = \big((n+1-|u_n|)\c(u_n)-(n+1-|v_n|)\c(v_n)+(n-|v_n|)\c(v_n),\psi_k'(w_n)\big) \\& \nonumber= (n+1)\big(\c(u_n)-\c(v_n),\psi_k'(w_n)\big)-\big(|u_n|\c(u_n)-|v_n|\c(v_n),\psi_k'(w_n)\big)\\&\nonumber \quad+\big((n-|v_n|)\c(v_n),\psi_k'(w_n)\big) \\& \nonumber\leq C(\gamma,\delta,n)\big(|w_n|,\psi_k'(w_n)\big)-\big((|u_n|-|v_n|)\c(u_n)-|v_n|(\c(u_n)-\c(v_n)),\psi_k'(w_n)\big)\\&\nonumber\quad+ \big((n-|v_n|)\c(v_n),\psi_k'(w_n)\big) \\&\nonumber \leq C(\gamma,\delta,n)\big(|w_n|,\psi_k'(w_n)\big)+\big(|u_n-v_n||\c(u_n)|,\psi_k'(w_n)\big)\\&\nonumber\quad+\big(|v_n|(\c(u_n)-\c(v_n)),\psi_k'(w_n)\big)+\big(|u_n-v_n||\c(v_n)|,\psi_k'(w_n)\big)\\& 
		\leq C(\gamma,\delta,n) \int_{0}^{1}|w_n(x)|_+^2\d x.
	\end{align} Again with the help of the bounds of $u_n, v_n$ and estimate on $\psi_k''$, we get
	\begin{align}\label{5.16}\nonumber
		\mathcal{A}^{(3)}&=\bigg((n+1-|u_n|)u_n^{\delta+1}-(n+1-|v_n|)v_n^{\delta+1}+(n-|v_n|)v_n^{\delta+1},\psi_k''(w_n)\frac{\partial}{\partial x}w_n\bigg) \\& \nonumber=
		(n+1)\bigg(u_n^{\delta+1}-v_n^{\delta+1},\psi_k''(w_n)\frac{\partial}{\partial x}w_n\bigg)\\&\nonumber\quad-\bigg((|u_n|-|v_n|)u_n^{\delta+1}-|v_n|(u_n^{\delta+1}-v_n^{\delta+1}),\psi_k''(w_n)\frac{\partial}{\partial x}w_n\bigg)\\&\nonumber\quad+\bigg((n-|v_n|)v_n^{\delta+1},\psi_k''(w_n)\frac{\partial}{\partial x}w_n\bigg) \\& \nonumber\leq 
		C(\delta,n)\bigg(|w_n|,\psi_k''(w_n)\bigg|\frac{\partial}{\partial x}w_n\bigg|\bigg)	\\&\nonumber\quad+\bigg(|u_n-v_n||u_n|^{\delta+1}+|v_n||u_n^{\delta+1}-v_n^{\delta+1}|,\psi_k''(w_n)\bigg|\frac{\partial}{\partial x}w_n\bigg|\bigg)	\\&\nonumber\quad	+ \bigg(|u_n-v_n||v_n|^{\delta+1},\psi_k''(w_n)\bigg|\frac{\partial}{\partial x}w_n\bigg|\bigg) \\&
		\leq C(\delta,n) \int_{0}^{1}|w_n(x)|_+^2\d x +\frac{\nu}{4}\int_{0}^{1}\psi_k''(w_n(x))\bigg(\frac{\partial}{\partial x}w_n(x)\bigg)^2\d x.
	\end{align}	

	Using the boundedness of $\phi_i$, estimates on $\psi''_k$ and Hypothesis \ref{H1} on $g$, we find a constant $C$ such that 
		\begin{align}\label{5.17}
			\mathtt{B}(s)\leq C(n,L)\int_{0}^{1}|w_n(s,x)|_+^2\d x.	
		\end{align}
	  From \eqref{5.03}-\eqref{5.17}, for every $n,$ we can find a constant $C=C(\nu,\alpha,\beta,\gamma,\delta,n)$ such that for all $k$ and  $t\in[0,T]$ 
		\begin{align*}
			\E\left[\Psi_k(w_n(t\wedge\tau))\right] \leq C\int_{0}^{t}\E\bigg(\int_{0}^{1}|w_n(s\wedge\tau,x)|_+^2\d x\bigg)\d s,
		\end{align*} for some stopping time $\tau$. Using DCT,  we are able to pass the limit $k\to\infty$  
		\begin{align*}
			\E\left[\Psi(w_n(t\wedge\tau))\right] \leq C\int_{0}^{t}\E\left[\Psi(w_n(s\wedge\tau))\right]\d s,
		\end{align*}where $\Psi(w_n(\cdot)):= \int_{0}^{1}|w_n(\cdot,x)|_+^2\d x$.
		Hence we obtain \eqref{5.02} by applying Gronwall's lemma, replacing the stopping time $\tau$ by a sequence of stopping times, which localize the process $\Psi(w_n(\cdot))$. Consequently,  we deduce that 
		\begin{align}\label{521}
		 u_n(t,x)\leq v_n(t,x), \ \mathbb{P}\text{-a.s., for all } t\in[0,T]\ \text{ and for a.e. }\ x\in[0,1].
		\end{align}
		\vskip 0.2 cm 
		\noindent{\bf Step 2:}
	Let us set $\v_n=u_n-\z_n,$ where $\z_n(t,x)= \int_{0}^{t}\int_{0}^{1}G(t-s,x,y)g(s,y,u_n(s,y)) \W(\d s,\d y)$. Since $g(\cdot,\cdot,\cdot)$ is uniformly bounded in $n$, using Corollary 4.3,  \cite{IG} for every $p\geq 1, \; T>0$, there exists a constant $C$ such that for all $n\geq1 $
	\begin{align*}
		\E\big(|\z_n(t,x)-\z_n(s,y)|^{2p}\big) \leq C\big(|t-s|^{\frac{1}{4}}+|x-y|^{\frac{1}{2}}\big)^{2p},
	\end{align*}for all $0\leq s,t\leq T,\; 0\leq x,y\leq 1$. Hence by Kolmogorov's continuity theorem  (see \cite{GRR,JBW}) we have
\begin{align*}
	\sup_{n\geq 1}\E\left(\sup_{(t,x)\in[0,T]\times[0,1]}|\z_n(t,x)|^p\right)<\infty,
\end{align*} therefore, $\z_n^*:=\sup\limits_{(t,x)\in[0,T]\times[0,1]}|\z_n(t,x)|$ is bounded in probability, uniformly in $n$. Firstly, we establish that $\sup\limits_{t\in[0,T]}\|u_n(t)\|_{\L^2}$ is also bounded in  probability, uniformly in $n$, with $u_n=\v_n+\z_n,$ where $\v_n$ is the unique mild  solution of the following equation:
\begin{align*}
	\frac{\partial \v_n}{\partial t}= \nu \frac{\partial^2 \v_n}{\partial x^2}+\beta \c_n(\v_n+\z_n)-\frac{\alpha}{\delta+1}\frac{\partial}{\partial x}\p_n(\v_n+\z_n), 
\end{align*}with the boundary conditions $\v_n(t,0)=\v_n(t,1)=0, \; t\in[0,T]$ and the initial condition $\v_n(0,x)=u_0(x),\; x\in[0,1]$. 
We have the following energy equality (see Proposition \ref{lem3.4})
\begin{align}\label{5.18}\nonumber
	\|\v_n(t)\|_{\L^p}^p&=\|u_0\|_{\L^p}^p-\nu p(p-1)\int_{0}^{t}\||\v_n(s)|^{\frac{p-2}{2}}\partial_x \v_n(s)\|_{\L^2}^2\d s \\&\nonumber\quad+\beta p \int_{0}^{t}\big(\c_n(\v_n(s)+\z_n(s)),|\v_n(s)|^{p-2}\v_n(s)\big)\d s\\& \quad-\frac{\alpha p}{\delta+1}\int_{0}^{t}\big(\partial_x\p_n(\v_n(s)+\z_n(s)), |\v_n(s)|^{p-2}\v_n(s)\big)\d s,
\end{align}
for all $t\in[0,T]$. 
Using the definition of cutoff function, we consider the following three cases: 
\vskip 0.2 cm
\noindent
{\bf Case 1:} If $|\v_n+\z_n|\leq n,$ then $\p_n(\cdot)=\p(\cdot)$ and $\c_n(\cdot)=\c(\cdot)$.
Using \eqref{UEE4}-\eqref{UEE6}, we can estimate the term $\beta\big(\c_n(\v_n+\z_n),|\v_n|^{p-2}\v_n\big)$ as
\begin{align}\label{5.19}\nonumber
	&\beta\big(\c_n(\v_n+\z_n),|\v_n|^{p-2}\v_n\big)\\&\nonumber\leq \frac{\beta}{2}\|\v_n\|_{\L^{p+2\delta}}^{p+2\delta}+\big\{2^{2\delta}\beta(1+\gamma)^2+2^\delta\beta(1+\gamma)+\beta\gamma+2^{2\delta-1}\beta(2\delta+1)\big\}\|\v_n\|_{\L^p}^p\\&\nonumber\quad +\frac{2^\delta\beta (1+\gamma)}{p}\bigg(\frac{p-1}{p}\bigg)^{p-1}\|\zeta_n\|_{\L^{p(\delta+1)}}^{p(\delta+1)}\\&\quad+\frac{1}{p+2\delta}\bigg(\frac{p(p+2\delta-1)}{\beta(p+2\delta)}^{p+2\delta-1}\bigg)^{p+2\delta-1}\big(2^{2\delta-1}\beta(2\delta+1)\big)^{p+2\delta}\|\zeta_n\|_{\L^{p+2\delta}}^{p+2\delta}.
\end{align}	From \eqref{UEE3}, we have
\begin{align}\label{5.20}\nonumber
	&\frac{\alpha}{\delta+1}\big|\big(\partial_x(\v_n+\z_n)^{\delta+1},|\v_n|^{p-2} \v_n\big)\big|
	\\&\nonumber\leq \frac{\nu}{2}\||\v_n|^{\frac{p-2}{2}}\partial_x\v_n\|_{\L^2}^2+\frac{\beta}{4}\|\v_n\|_{\L^{p+2\delta}}^{p+2\delta} + \frac{2}{p+2\delta}\bigg(\frac{4(p+2\delta-2)}{\beta(p+2\delta)}\bigg)^{\frac{p+2\delta-2}{2}}\\&\nonumber\qquad\times\bigg(\frac{2^{\delta}(p-1)^2\alpha^2}{2\nu}\bigg)^{\frac{p+2\delta}{2}}\|\zeta_n\|_{\L^{p+2\delta}}^{p+2\delta}+ \frac{2^\delta (p-1)^2\alpha^2}{4\nu}\|\v_n\|_{\L^p}^p\\&\quad +\frac{2^\delta (p-1)^2\alpha^2}{\nu p}\bigg(\frac{2(p-2)}{p}\bigg)^{\frac{p-2}{2}}\|\zeta_n\|_{\L^{p(\delta+1)}}^{p(\delta+1)}.
\end{align}
\vskip 0.2 cm
\noindent
{\bf Case 2:} If $|\v_n+\z_n|> n+1$, then $\p_n(\v_n+\z_n)=\c_n(\v_n+\z_n)=0$. 
\vskip 0.2 cm
\noindent
{\bf Case 3:} If $ n< |\v_n+\z_n|\leq n+1$, then $\p_n(\v_n+\z_n)=m\p(\v_n+\z_n)$ and $\c_n(\v_n+\z_n)=m\c(\v_n+\z_n)	$, where $m=n+1-|\v_n+\z_n|$ with $0\leq m<1$. We estimate the term $\beta\big(m\c_n(\v_n+\z_n),\v_n\big)$ as
\begin{align}\label{5.21}\nonumber
	&\beta\big(m\c_n(\v_n+\z_n),|\v_n|^{p-2}\v_n\big) \\&\nonumber=  
	\beta(1+\gamma)\big(m(\v_n+\z_n)^{\delta+1},|\v|^{p-2}\v_n\big)-\beta\gamma\big(m(\v_n+\z_n),|\v_n|^{p-2}\v_n\big)\\&\nonumber\quad-\beta\big(m(\v_n+\z_n)^{2\delta+1},|\v_n|^{p-2}\v_n\big)
	\\&=: I_1+I_2+I_3.
\end{align}Using similar arguments to \eqref{UEE4}, we obtain 
\begin{align}\label{5.021}\nonumber
	|I_1|&\leq 2^\delta	\beta(1+\gamma)\big(m^{\frac{1}{2}}(|\v_n|^{\delta+1}+|\z_n|^{\delta+1}),m^{\frac{1}{2}}|\v_n|^{p-1}\big)\\&\nonumber = 2^\delta\beta(1+\gamma)\big\{\big(m^{\frac{p+3\delta}{2(p+2\delta)}}m^{\frac{p+\delta}{2(p+2\delta)}}|\v_n|^{\delta+1},m^{\frac{p+\delta}{2(p+2\delta)}}|\v|^{p-1}\big)+\big(m^\frac{1}{2}|\z_n|^{\delta+1},m^\frac{1}{2}|\v_n|^{p-1}
	\big)\big\}	
	 \\&\nonumber\leq 2^\delta\beta(1+\gamma)\|m^\frac{1}{2(p+2\delta)}\v_n\|_{\L^{p+\delta}}+2^\delta\beta(1+\gamma)\|m^\frac{1}{2(\delta+1)}\z_n\|_{\L^{p(\delta+1)}}^{\delta+1}\|m^{\frac{1}{2(p-1)}}\v_n\|_{\L^p}^{p-1} \\&\nonumber\leq 
	2^\delta\beta(1+\gamma)\|m^{\frac{1}{2(p+2\delta)}}\v_n\|_{\L^{p+2\delta}}^{\frac{p+2\delta}{2}}\|m^{\frac{1}{2(p+\delta)}}\v_n\|_{\L^p}^\frac{p}{2}+2\delta\beta(1+\gamma)\|m^{\frac{1}{2(p-1)}}\v\|_{\L^p}^p\\&\nonumber\quad +\frac{2^\delta\beta(1+\gamma)}{p}\bigg(\frac{p-1}{p}\bigg)^{p-1}\|m^{\frac{1}{2(\delta+1)}}\z_n\|_{\L^{p(\delta+1)}}^{p(\delta+1)}\\&\nonumber\leq 
	\frac{\beta}{4}\|m^{\frac{1}{2(p+2\delta)}}\v_n\|_{\L^{p+2\delta}}^{p+2\delta}+2^{2\delta}\beta(1+\gamma)^2\|\v_n\|_{\L^p}^p+2^\delta\beta(1+\gamma)\|\v_n\|_{\L^p}^p\\&\quad+\frac{2^\delta\beta(1+\gamma)}{p}\bigg(\frac{p-1}{p}\bigg)^{p-1}\|\z_n\|_{\L^{p(\delta+1)}}^{p(\delta+1)},
		\end{align}where we have used the fact that $m\in[0,1)$. Using similar arguments to \eqref{UEE5}, the term $I_2$ of \eqref{5.21} can be handle in the following way:
	\begin{align}\label{5.022}\nonumber
		I_2 &=-\beta\gamma \big(m\v_n,|\v_n|^{p-2}\v_n\big)-\beta\gamma \big(m\z_n,|\v_n|^{p-2}\v_n\big)\\&\nonumber =-\beta\gamma\|m^{\frac{1}{2p}}\v_n\|_{\L^p}^p-\beta\gamma \big(m^{\frac{p+1}{2p}}\z_n,m^{\frac{p-1}{2p}}|\v_n|^{p-2}\v_n\big)\\&\nonumber\leq -\beta\gamma\|m^{\frac{1}{2p}}\v_n\|_{\L^p}^p+\beta\gamma \|m^{\frac{p+1}{2p}}\z_n\|_{\L^p}\|m^{\frac{1}{2p}}\v_n\|_{\L^p}^{p-1} \\&\leq -\frac{\beta\gamma}{2}\|m^{\frac{1}{2p}}\v_n\|_{\L^p}^p+\frac{\beta\gamma}{p}\bigg(\frac{p-1}{p}\bigg)^{p-1}\|\z_n\|_{\L^p}^p.
		\end{align} Applying similar calculations to \eqref{UEE6}, we estimate the term $I_3$ of \eqref{5.21} as 
	\begin{align}\label{5.023}\nonumber
		I_3&= -\beta \big(m\v_n^{2\delta+1}+(2\delta+1)m\z_n(\theta\v_n+(1-\theta)\z_n)^{2\delta},|\v_n|^{p-2}\v_n\big)\\&\nonumber = -\beta \big(m\v_n^{2\delta+1},|\v_n|^{p-2}\v_n\big)-\beta(2\delta+1)\big(m\z_n(\theta\v_n+(1-\theta)\z_n)^{2\delta},|\v_n|^{p-2}\v_n\big)\\&\nonumber= 
		-\beta \|m^{\frac{1}{2(p+2\delta)}}\v_n\|_{\L^{p+2\delta}}^{p+2\delta}-\beta(2\delta+1)\big(m\z_n(\theta\v_n+(1-\theta)\z_n)^{2\delta},|\v_n|^{p-2}\v_n\big) \\&\nonumber\leq 	-\beta \|m^{\frac{1}{2(p+2\delta)}}\v_n\|_{\L^{p+2\delta}}^{p+2\delta} -2^{2\delta-1}\beta(2\delta+1)\big(m|\z_n||\v_n|^{2\delta}+m|\z_n|^{2\delta},|\v_n|^{p-1}\big)\\&\nonumber=	-\beta \|m^{\frac{1}{2(p+2\delta)}}\v_n\|_{\L^{p+2\delta}}^{p+2\delta} -2^{2\delta-1}\beta(2\delta+1)\big(m^{\frac{p+2\delta+1}{2(p+2\delta)}}|\z_n|m^{\frac{p+2\delta-1}{2(p+2\delta)}}|\v_n|^{2\delta}+m|\z_n|^{2\delta},|\v_n|^{p-1}\big)\\&\nonumber\leq -\beta \|m^{\frac{1}{2(p+2\delta)}}\v_n\|_{\L^{p+2\delta}}^{p+2\delta} +2^{2\delta-1}\beta(2\delta+1)\|m^{\frac{p+2\delta+1}{2(p+2\delta)}}\z_n\|_{\L^{p+2\delta}}\|m^{\frac{1}{2(p+2\delta)}}\v_n\|_{\L^{p+2\delta}}^{p+2\delta-1}\\&\nonumber\quad+\|\z_n\|_{\L^{p(2\delta+1)}}^{2\delta+1}\|\v_n\|_{\L^p}^{p-1}\\&\nonumber\leq -\frac{3\beta}{4}\|m^{\frac{1}{2(p+2\delta)}}\v_n\|_{\L^{p+2\delta}}^{p+2\delta}+\frac{1}{p+2\delta}\bigg(\frac{4(p+2\delta+1)}{\beta(p+2\delta)}\bigg)^{p+2\delta-1}\big(2^{2\delta-1}\beta(2\delta+1)\big)^{p+2\delta}\|\z_n\|_{\L^{p+2\delta}}^{p+2\delta}\\&\quad +2^{2\delta-1}\beta(2\delta+1)\|\v_n\|_{\L^p}^p.
	\end{align}In similar ay as in \eqref{5.20}, and the bound $0\leq m<1$, we estimate the term $\frac{\alpha}{\delta+1}\big(m(\v_n+\z_n)^{\delta+1},\partial_x \v_n\big)$as
\begin{align}\label{5.22}\nonumber
	-&\frac{\alpha}{\delta+1}\big(\partial_xm(\v_n+\z_n)^{\delta+1},|\v_n|^{p-2} \v_n\big)\\&\nonumber=
	\alpha(p-1)\big(m\zeta_n(\theta\v_n+(1-\theta)\zeta_n)^\delta,|\v_n|^{p-2}\partial_x\v_n\big)\\&\nonumber=
	\alpha(p-1)\big(m^{\frac{1}{2}}|\v_n|^{\frac{p-2}{2}}\zeta_n(\theta\v_n+(1-\theta)\zeta_n)^\delta,m^{\frac{1}{2}}|\v_n|^{\frac{p-2}{2}}\partial_x\v_n\big)\\&\nonumber\leq 
	\alpha(p-1)\|m^{\frac{1}{2}}|\v_n|^{\frac{p-2}{2}}\zeta_n(\theta\v_n+(1-\theta)\zeta_n)^\delta\|_{\L^2}\|m^{\frac{1}{2}}|\v_n|^{\frac{p-2}{2}}\partial_x\v_n\|_{\L^2}	\\&\nonumber\leq \frac{\nu}{2}\||\v_n|^{\frac{p-2}{2}}\partial_x\v_n\|_{\L^2}^2+\frac{\beta}{4}\|m^{\frac{1}{2(p+2\delta)}}\v_n\|_{\L^{p+2\delta}}^{p+2\delta} + \frac{2}{p+2\delta}\bigg(\frac{4(p+2\delta-2)}{\beta(p+2\delta)}\bigg)^{\frac{p+2\delta-2}{2}}\\&\nonumber\qquad\times\bigg(\frac{2^{\delta}(p-1)^2\alpha^2}{2\nu}\bigg)^{\frac{p+2\delta}{2}}\|\zeta_n\|_{\L^{p+2\delta}}^{p+2\delta}+ \frac{2^\delta (p-1)^2\alpha^2}{4\nu}\|\v_n\|_{\L^p}^p\\&\quad +\frac{2^\delta (p-1)^2\alpha^2}{\nu p}\bigg(\frac{2(p-2)}{p}\bigg)^{\frac{p-2}{2}}\|\zeta_n\|_{\L^{p(\delta+1)}}^{p(\delta+1)}.
\end{align}Let us now discuss the rest of the proof for case 1 only, for the other cases, one can obtain a similar result by using \eqref{5.21}-\eqref{5.22}, instead of \eqref{5.19} and \eqref{5.20}. Substituting \eqref{5.19}, \eqref{5.20} in \eqref{5.18}, and using similar arguments to \eqref{UEE7}, we obtain
\begin{align}\label{5.23}\nonumber
		&\|\v_n(t)\|_{\L^p}^p+\frac{\nu p(p-1)}{2}\int_{0}^{t}\||\v_n(s)|^{\frac{p-2}{2}}\partial_x \v_n(s)\|_{\L^2}^2\d s+\beta p \gamma \int_{0}^{t} \|\v_n(s)\|_{\L^{p}}^p\d s\\&\nonumber\quad +\frac{\beta p}{8}\int_0^t \|\v_n(s)\|_{\L^{p+2\delta}}^{p+2\delta}\d s \\&\leq\|u_0\|_{\L^p}^p+pK_1 T+pK_2T\sup_{t\in[0,T]}\|\z_n(t)\|_{\L^{p(\delta+1)}}^{p(\delta+1)}+pK_3T\sup_{t\in[0,T]}\|\z_n(t)\|_{\L^{p+2\delta}}^{p+2\delta},
	\end{align}where the constants $K_i,$ for $i=1,2,3$ are defined in \eqref{3150}-\eqref{315}, respectively.
From \eqref{5.23}, we find
\begin{align*}
\sup\limits_{t\in[0,T]}	\|\v_n(t)\|_{\L^p}^p\leq  \|u_0\|_{\L^p}^p+pK_1 T+pK_2T|\z_n^*|^{p(\delta+1)}+pK_3T|\z_n^*|^{p+2\delta},
	\end{align*}holds for all $n\geq 1$. From the above expression,  we obtain that the sequence $\sup\limits_{t\in[0,T]}\|\v_n(t)\|_{\L^p}$ is bounded in probability, uniformly in $n,$ since the sequence $\z_n^*$ is bounded in probability, uniformly in $n$. Therefore the sequence $\sup\limits_{t\in[0,T]}\|u_n(t)\|_{\L^p}$ is also bounded in probability, uniformly in $n$.

By Lemmas \ref{lemma3.1}, \ref{lemma3.2} and \ref{lemma3.6}, the sequences of $\L^p(\1)$-valued stochastic processes $J_1^n(t), J_2^n(t)$ defined by
\begin{align*}
	J_1^n(t)&= \int_{0}^{t}\int_{0}^{1}G(t-s,x,y)\c_n(u_n(s,y))\d y \d s,\\
	J_2^n(t)&= \int_{0}^{t}\int_{0}^{1} \frac{\partial G}{\partial y }(t-s,x,y)\p_n(u_n(s,y))\d y \d s,
\end{align*}are weakly compact in $\C([0,T];\L^p(\1)), \text{ for } p\geq2\delta+1$. The process
\begin{align*}
	J_0(t)=\int_{0}^{1}G(t,x,y)u_0(y)\d y, \; \text{ for } t\in[0,T]
\end{align*}is in $\C([0,T];\L^p(\1)), \text{ for } p\geq1$. Also, the sequence of the processes
\begin{align*}
	J_3^n(t)= \int_{0}^{t}\int_{0}^{1}G(t-s,x,y)g(s,y,u_n(s,y)) \W(\d s,\d y), \; t\in[0,T], \; x\in[0,1]
\end{align*}is weakly compact in $\C([0,T]\times[0,1])$. Thus the sequence of the processes 
\begin{align*}
	u_n(t)=J_0(t)+\beta J_1^n(t)+\frac{\alpha}{\delta+1} J_2^n(t)+J_3^n(t), \; t\in[0,T]
\end{align*}is weakly compact in the space $\C([0,T];\L^p(\1)), \text{ for } p\geq  2\delta+1$.

	\vskip 0.2 cm 
\noindent{\bf Step 3:}
 In order to conclude the proof of our main result, we apply  Skorokhod's representation  theorem and  Lemma 4.1, \cite{IG} (see Lemma 1.1, \cite{IGNK} also).  For a given pair of subsequences $u_m$ and $u_l$, there exist subsequences $m_k$ and $l_k$ (denoted by $u_{m_k},u_{l_k}$) and a sequence of random variables $z_k:=(\tilde{u}_k,\bar{u}_k,\bar{\W}_k)$, $k=1,2,\ldots$, in $\mathbb{B}:=	\C([0,T];\L^p(\1))\times\C([0,T];\L^p(\1))\times	\C([0,T]\times[0,1])$, for $p\geq 2\delta+1$ in some probability space $(\bar{\Omega},\mathcal{\bar{F}},\bar{\P})$ such that the sequence $z_k$ converges almost surely in $\mathbb{B}$ to the random variable $z:=	(\tilde{u},\bar{u},\bar{\W})$ as $k\to\infty$, with the same distributions of $z_k$ and $(u_{m_k},u_{l_k},\W)$. Here, the two random fields $\bar{\W}$ and $\bar{\W}_k$ are Brownian fields defined on  different stochastic bases $\bar{\Theta}=(\bar{\Omega},\bar{\mathscr{F}}, (\bar{\mathscr{F}}_t)_{t\geq 0}, \bar{\P})$ and $\bar{\Theta}_k=(\bar{\Omega},\bar{\mathscr{F}}, (\bar{\mathscr{F}}_t^k)_{t\geq 0}, \bar{\P})$, respectively, where $\bar{\mathscr{F}}_t$ and $\bar{\mathscr{F}}_t^k$ are the completion of the $\sigma$-fields generated by $z(s,x)$ and $z_k(s,x),$ for all $s\leq t,\; x\in[0,1]$, respectively.	For every smooth function $\phi\in\C^2([0,1])$, with $\phi(0)=\phi(1)=0$, we have 
	\begin{align}\label{5.24}\nonumber
		&\int_{0}^{1}u_n(t,x)\phi(x)\d x= \int_{0}^{1}u_0(x)\phi(x)d x+\nu\int_{0}^{t}\int_{0}^{1}u_n(s,x)\phi''(x)\d x\d s\\&\nonumber\quad+\beta\int_{0}^{t}\int_{0}^{1}\c_n(u_n(s,x))\phi(x)\d x\d s+\frac{\alpha}{\delta+1}\int_{0}^{t}\int_{0}^{1}\p_n(u_n(s,x))\phi'(x)\d x\d s\\&\quad+\int_{0}^{t}\int_{0}^{1}g(s,x,u_n(s,x))\phi(x)\W(\d s,\d x), \ \mathbb{P}\text{-a.s.},
	\end{align}	for all $t\in[0,T]$. Also the above equation \eqref{5.24} holds true if we replace $u_n$ and $\W$ by $\tilde{u}_k$ and $\bar{\W}_k$, then on passing the limit $k\to\infty$ with the help of Corollary 4.5, \cite{IG} and Lemma \ref{lemma5.1},  one can deduce that $\tilde{u}$ solves the SGBH equation \eqref{1.01} on the stochastic basis $\bar{\Theta}$ with the Wiener process $\bar{\W}$. Similarly $\bar{u}$ also solves the SGBH equation \eqref{1.01} on the basis $\bar{\Theta}$ with the Wiener process $\bar{\W}$. We already know that the solution is unique, and hence we  obtain $\tilde{u}=\bar{u}$. Now applying Lemma 4.1,  \cite{IG},  we obtain that $u_n$ converges in $\C([0,T];\L^p(\1))$, for $p\geq  2\delta+1$ in probability to some random element $u\in \C([0,T];\L^p(\1))$, for $p\geq  2\delta+1$.  Similarly, one can pass the limit for $v_n$.
Applying Corollary 4.5,  \cite{IG} and Lemma \ref{lemma5.1}, one can pass the limit $n\to\infty$ in 
\eqref{5.24} to find that $u$ and $v$  are  solutions of the SGBH equation \eqref{1.01} with initial data $u_0$ and $v_0,$ respectively and from \eqref{521}, one can deduce that  $u(t,x)\leq v(t,x)$, $\mathbb{P}$-a.s., for all $t\in[0,T]$ and a.e. $x\in[0,1]$. 
\end{proof}

\section{Weak Differentiability of the Solution}\label{sec6}\setcounter{equation}{0}
In this section, we discuss the weak differentiability of the solution to SGBH equation. We first start with the general theory, that is, we recall some basic facts about  stochastic calculus of variations (or Malliavin calculus) for the Brownian sheet $\W(\cdot,\cdot)$. The interested one are referred to see the monographs \cite{ND,NDNE}, etc. for a detailed discussion on  Malliavin calculus. In the sequel, we assume that the initial data $u_0\in\C([0,1])$ and $H=\L^2([0,T]\times[0,1])$. 

\subsection{Elements of Malliavin calculus}
Let $\S$ denote the class of smooth and cylindrical random variables of the form 
\begin{align}\label{6.01}
	F=f(\W(h_1),\ldots,\W(h_n)),
\end{align} where $n\geq 1, f\in\C_b^\infty(\R^n)$ (space of infinite times continuously differentiable functions $f:\R^n\to\R$ such that $f$ and all its partial derivatives are bounded), $h_1,\ldots,h_n\in H$.  We define the Malliavin derivative as two-parameter stochastic process $\{\D_{t,x}F,\ (t,x)\in[0,T]\times[0,1]\}$ for any given random variable $F$ of the from \eqref{6.01} as
\begin{align*}
\D F=	\D_{t,x}F =\sum_{i=1}^{n}\frac{\partial f}{\partial x_i}(\W(h_1),\ldots,\W(h_n))h_i(t,x), \; (t,x)\in[0,T]\times[0,1].
\end{align*}In this way, the derivative $\D F$ is an element of $\L^2(\Omega\times[0,T]\times[0,1])\cong\L^2(\Omega; H)$. In general, one can define the iterative derivative operator on a smooth and cylindrical random variable by setting 
\begin{align*}
	\D_{z_1,\ldots,z_k}^n F =\D_{z_1}\cdots\D_{z_k} F\ \text{ for } \ z_i\in [0,T]\times[0,1], \; i=1,\ldots,k.
\end{align*}This iterative operator $\D^k$ is a closable unbounded operator from $\L^p(\Omega)$ into $\L^p(\Omega;\L^2(([0,T]\times[0,1])^k)),$ for each $k\geq 1$  and $p\geq 1$. Let us denote the closure of $\S$ with respect to the norm  
\begin{align*}
	\| F\|_{k,p}^p =\E\big[| F|^p\big]+\E\left[\sum_{i=1}^{k}\|\D  F\|_{\L^2(([0,T]\times[0,1])^i)}^p\right]
\end{align*} by $\Ds^{k,p}$.  Also, let us set $\Ds^\infty=\bigcap_{k,p}\Ds^{k,p}$.
The operator $\D$ is local in the space $\Ds^{1,1}$ in the sense that for any given random field $ F\in\Ds^{1,1}$ \begin{align*}
	\D F\chi_{\{ F=0\}}=0, \ \P\text{-a.s.}
\end{align*} We denote by $\Ds_{\loc}^{k,p},$ the space of random variables $ F$ such that there exists a sequence $(\Omega_n, F_n)\subset \mathscr{F}\times \Ds^{k,p}$ with $\Omega_n\uparrow\Omega$ and for each $n,  F= F_n$, $\P$-a.s. on $\Omega_n$. In the above case, the derivatives $\D^i F$ are defined by 
\begin{align*}
	\D F^i=\D F_n^i \ \text{ on } \ \Omega_n.
\end{align*} Next, we state a basic criteria for the existence of densities for one-dimensional random variables, the interested readers may refer to \cite{NBFH,ND}, etc. 
\begin{theorem}[Proposition 7.1.2, \cite{NDNE}]\label{theorem6.1}
	Let $ F$ be a random variable in $\Ds_{\loc}^{1,1}$. Suppose that $\|\D F\|_H>0$, $\P$-a.s. Then the law of $ F$ is absolutely continuous with respect to the Lebesgue measure on $\mathbb{R}$.
\end{theorem}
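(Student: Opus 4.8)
This is the one-dimensional Bouleau--Hirsch absolute continuity criterion (the statement is quoted from \cite{NDNE}), and the plan is to reproduce its classical proof. The only tools needed are the closability of $\D$ as an operator from $\L^1(\Omega)$ into $\L^1(\Omega;H)$ and its locality (both recorded above), together with the standard $C^1$ chain rule in $\Ds^{1,1}$. \emph{Stage 1 (removing the localization).} Let $(\Omega_n,F_n)\subset\mathscr{F}\times\Ds^{1,1}$ be the localizing sequence, so $\Omega_n\uparrow\Omega$, $F=F_n$ $\P$-a.s.\ on $\Omega_n$, and $\D F=\D F_n$ on $\Omega_n$; since $\|\D F\|_H>0$ $\P$-a.s., we get $\Omega_n\subset\{\|\D F_n\|_H>0\}$ up to a $\P$-null set. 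Hence, for a Borel set $B\subset\R$ of Lebesgue measure zero, $\P(F\in B)=\lim_n\P(\{F_n\in B\}\cap\Omega_n)\le\lim_n\P\big(\{F_n\in B\}\cap\{\|\D F_n\|_H>0\}\big)$, and it suffices to prove the following for an \emph{arbitrary} $G\in\Ds^{1,1}$: if $B\subset\R$ is Borel with Lebesgue measure zero, then $\P\big(\{G\in B\}\cap\{\|\D G\|_H>0\}\big)=0$.

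\emph{Stage 2 (the analytic core).} Write $\lambda$ for Lebesgue measure on $\R$. By outer regularity choose open sets $U_k\supset B$ with $U_{k+1}\subset U_k$ and $\lambda(U_k)\to0$, so that $B\subset\bigcap_kU_k$ and $\lambda\big(\bigcap_kU_k\big)=0$. Put $\phi_k(y):=\int_{-\infty}^y\chi_{U_k}(x)\,\d x$; each $\phi_k$ is nondecreasing, $1$-Lipschitz, and $0\le\phi_k\le\lambda(U_k)$. Because $U_k$ is open, $\chi_{U_k}$ is an increasing limit of nonnegative continuous bounded functions $g_{k,m}$, and $\phi_{k,m}(y):=\int_{-\infty}^yg_{k,m}(x)\,\d x$ is a bounded $C^1$ function with bounded derivative $g_{k,m}$, so the $C^1$ chain rule gives $\phi_{k,m}(G)\in\Ds^{1,1}$ with $\D\big(\phi_{k,m}(G)\big)=g_{k,m}(G)\,\D G$. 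Letting $m\to\infty$, $\phi_{k,m}(G)\to\phi_k(G)$ in $\L^1(\Omega)$ and $g_{k,m}(G)\,\D G\to\chi_{U_k}(G)\,\D G$ in $\L^1(\Omega;H)$ (dominated by $\|\D G\|_H\in\L^1(\Omega)$), so closability yields $\phi_k(G)\in\Ds^{1,1}$ and $\D\big(\phi_k(G)\big)=\chi_{U_k}(G)\,\D G$. Now let $k\to\infty$: $\phi_k(G)\to0$ uniformly, hence in $\L^1(\Omega)$, while $\chi_{U_k}(G)\,\D G\to\chi_{\bigcap_jU_j}(G)\,\D G$ in $\L^1(\Omega;H)$ by dominated convergence; closability of $\D$ then forces $\chi_{\bigcap_jU_j}(G)\,\D G=0$ $\P$-a.s. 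Thus $\D G=0$ $\P$-a.s.\ on $\{G\in\bigcap_jU_j\}$, which contradicts $\|\D G\|_H>0$ unless $\P\big(\{G\in\bigcap_jU_j\}\cap\{\|\D G\|_H>0\}\big)=0$; since $B\subset\bigcap_jU_j$, this gives $\P\big(\{G\in B\}\cap\{\|\D G\|_H>0\}\big)=0$, completing the proof.

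The routine parts — the localization bookkeeping in Stage 1, the elementary properties of the $\phi_k$, and the two dominated-convergence passages — are immediate. The delicate point, which I expect to be the main obstacle, is the identity $\D\big(\phi_k(G)\big)=\chi_{U_k}(G)\,\D G$: the Lipschitz chain rule only provides $\D\big(\phi_k(G)\big)=Z\,\D G$ with $|Z|\le1$, and one cannot replace $Z$ by $\phi_k'(G)=\chi_{U_k}(G)$ without knowing in advance that the law of $G$ is absolutely continuous — which would be circular here. This is precisely why one works with \emph{open} supersets $U_k$: then $\chi_{U_k}$ is an increasing limit of continuous functions, and the identity is deduced from the unproblematic $C^1$ chain rule together with the closability of $\D$, requiring no a priori regularity of the law of $G$.
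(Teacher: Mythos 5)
The paper does not prove this statement — it is imported verbatim as Proposition 7.1.2 of \cite{NDNE} (the Bouleau--Hirsch criterion, cf.\ \cite{NBFH}) — so there is no internal proof to compare against, and your argument is a correct, self-contained reproduction of the standard proof from that source: localization to reduce to an arbitrary $G\in\Ds^{1,1}$, then the open-set/$C^1$-approximation argument combined with the closability of $\D$ from $\L^1(\Omega)$ into $\L^1(\Omega;H)$ and its locality, both of which the paper records. You also correctly identify, and legitimately circumvent, the one delicate point: one may not invoke the Lipschitz chain rule with $\phi_k'=\chi_{U_k}$ directly (that would presuppose absolute continuity of the law of $G$), and your route through continuous $g_{k,m}\uparrow\chi_{U_k}$, the $C^1$ chain rule and closedness of $\D$ is exactly how the cited reference handles it.
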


\subsection{Weak differentiability of the solution to SGBH equation} Let us now prove the weak differentiability of the solution to the SGBH equation \eqref{1.01} in the Malliavin calculus sense. In view of Theorem \ref{theorem6.1}, if we manage to show that the solution $u(\cdot)$ of SGBH equation \eqref{1.01} belongs to $\Ds_{\loc}^{1,p},$ for  $p\geq 2\delta+1$ and $\|\D F\|_H>0$, $\P$-a.s, then we are done. Our main goal of this section is to prove that $u(\cdot)$ belongs to $\Ds_{\loc}^{1,p},$ for $p\geq 2\delta+1$.
In order to do this, we need to establish the following technical Proposition and Lemmas.

%
  For $p\geq 2\delta+1$, we set $\mathbb{L}^{1,p}=\L^p([0,T]\times[0,1];\Ds^{1,p})$.
\begin{proposition}\label{prop6.5}
	Let $u(\cdot)$ be the solution of \eqref{4.05} for a fixed value of $n$. Then, for $p\geq 2\delta+1$, for each $(t,x)\in[0,T]\times[0,1]$, the random variable $u(t,x)\in\Ds^{1,p}$, and the process $u\in \mathbb{L}^{1,p}$. Moreover, the derivative of $u$ satisfies the following equation:  
	\begin{align}\label{6.04}\nonumber
	&	\D_{r,z}u(t,x)\nonumber\\ &\nonumber=G(t-r,x,z)g(r,z,\pi_nu(r,z))\\&\nonumber\quad+\beta\int_{r}^{t}\int_{0}^{1}G(t-s,x,y)\big((1+\gamma)(\delta+1)(\pi_nu)^\delta-\gamma-(2\delta+1)(\pi_nu)^{2\delta}\big)\D_{r,z}\pi_nu(s,y)\d y\d s\\&\nonumber\quad+\alpha\int_{r}^{t}\int_{0}^{1}\frac{\partial G}{\partial y}(t-s,x,y)(\pi_nu)^\delta\D_{r,z}\pi_nu(s,y)\d y\d s\\&\quad+\int_{r}^{t}\int_{0}^{1}G(t-s,x,y)M_n(s,y)\D_{r,z}\pi_nu(s,y)\W(\d s, \d y), 
	\end{align}where $M_n$ is an adapted processes bounded by the Lipschitz constant $L$.	
\end{proposition}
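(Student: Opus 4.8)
The plan is to prove the statement by a Picard approximation scheme combined with the standard calculus rules for the Malliavin derivative, closely following the strategy of \cite{LZN,EPTZ} and the contraction argument already used in the proof of Proposition \ref{theorem4.2}. Set $u^{(0)}(t,x):=G(t,x;u_0)$ and define recursively $u^{(m+1)}:=\mathscr{A}u^{(m)}$ with $\mathscr{A}$ as in \eqref{4.06}; by the proof of Proposition \ref{theorem4.2} the iterates converge to $u$ in $\mathcal{H}$ (hence, along a subsequence, $u^{(m)}(t,x)\to u(t,x)$ in $\L^p(\Omega)$ for a.e.\ $(t,x)$) and satisfy the uniform bound \eqref{4.8}. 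Since $u^{(0)}$ is deterministic, $u^{(0)}(t,x)\in\Ds^{1,p}$ with $\D u^{(0)}\equiv 0$. I would then prove by induction on $m$ that $u^{(m)}(t,x)\in\Ds^{1,p}$ for every $(t,x)$, that $u^{(m)}\in\mathbb{L}^{1,p}$, and that $\D_{r,z}u^{(m+1)}(t,x)$ satisfies the equation obtained from \eqref{6.04} by replacing $u$ on the right-hand side by $u^{(m)}$ (so that \eqref{6.04} itself will follow by passing to the limit).

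For the induction step the first ingredient is the Malliavin differentiability of the truncation. Since by \eqref{4.04} one has $\pi_n v=v\,\phi_n(\|v\|_{\L^p}^p)$ with $|\phi_n|\le 1$ and $|\phi_n'(r)|\le \frac{1}{pr}\chi_{(n^p,\infty)}(r)$, I would first establish, for a $C^1$-mollification $\phi_n^{\e}$ of $\phi_n$, that $\pi_n^{\e}u^{(m)}(s,\cdot):=u^{(m)}(s,\cdot)\,\phi_n^{\e}(\|u^{(m)}(s)\|_{\L^p}^p)$ belongs to $\Ds^{1,p}$, using that $v\mapsto\|v\|_{\L^p}^p$ is Fr\'echet differentiable on $\L^p(\1)$ for $p\ge 2$, the chain rule for smooth functions, and the product rule; then pass to the limit $\e\to 0$ with the help of the closedness of $\D$ and the uniform bounds $|\phi_n^{\e}|\le 1$, $r|(\phi_n^{\e})'(r)|\le C$, which yields $\pi_n u^{(m)}(s,\cdot)\in\Ds^{1,p}$ together with an explicit expression for $\D_{r,z}(\pi_n u^{(m)}(s,\cdot))$ in terms of $\D_{r,z}u^{(m)}(s,\cdot)$. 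The second ingredient is the action of $\D$ on the three integral terms of $\mathscr{A}u^{(m)}$: since $\c$ and $\p$ are polynomials with $\c'(v)=(1+\gamma)(\delta+1)v^{\delta}-\gamma-(2\delta+1)v^{2\delta}$ and $\frac{\alpha}{\delta+1}\p'(v)=\alpha v^{\delta}$, the deterministic-integral terms come from the chain rule, and for the Walsh (It\^o) stochastic integral I would use the commutation relation between $\D$ and stochastic integration, which produces the diagonal term $G(t-r,x,z)g(r,z,\pi_n u^{(m)}(r,z))$ plus a stochastic integral, together with the generalized chain rule for the Lipschitz function $g(s,y,\cdot)$, which yields $\D_{r,z}[g(s,y,\pi_n u^{(m)}(s,y))]=M_n(s,y)\,\D_{r,z}(\pi_n u^{(m)}(s,y))$ with $|M_n(s,y)|\le L$ by Hypothesis \ref{H1}. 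Assembling these gives the asserted recursion for $\D u^{(m+1)}$.

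Next I would show that $\D u^{(m)}$ is bounded uniformly in $m$ in the relevant Malliavin--Sobolev norm. Introducing, for instance, $\Psi_m(t):=\int_0^T\int_0^1\E\big[\|\D_{r,z}u^{(m)}(t,\cdot)\|_{\L^p(\1)}^p\big]\,\d z\,\d r$ (and an analogous $\sup_x$ quantity for the pointwise claim), the heat-kernel estimates \eqref{A1}--\eqref{A7}, Lemmas \ref{lemma3.1}, \ref{lemma3.2} and \ref{lemma3.5} in their $H$-valued versions (valid by the Remark following Lemma \ref{lemma3.6}), the boundedness $|g|\le K$ which controls the inhomogeneous term $G(t-r,x,z)g(r,z,\pi_n u^{(m)}(r,z))$, the bounds $\|(\pi_n u^{(m)}(s))^{\delta}\|_{\L^{p/\delta}}\le n^{\delta}$ and $\|(\pi_n u^{(m)}(s))^{2\delta}\|_{\L^{p/(2\delta)}}\le n^{2\delta}$ coming from $\|\pi_n u^{(m)}(s)\|_{\L^p}\le n$, together with H\"older's and Young's inequalities --- exactly as in Claim~2 of the proof of Proposition \ref{theorem4.2} --- yield an inequality $\Psi_{m+1}(t)\le C(n,T)+\int_0^t\kappa(t-s)\Psi_m(s)\,\d s$ with $\kappa\in\L^1(0,T)$; a singular Gronwall argument then gives $\sup_m\sup_{t\le T}\Psi_m(t)<\infty$, and similarly for the $\sup_x$ norm needed to obtain $u^{(m)}(t,x)\in\Ds^{1,p}$ pointwise. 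Since $u^{(m)}(t,x)\to u(t,x)$ in $\L^p(\Omega)$ with uniformly bounded $\Ds^{1,p}$-norms, the weak compactness of bounded sets of $\Ds^{1,p}$ (for $p>1$) together with the closedness of $\D$ gives $u(t,x)\in\Ds^{1,p}$ and $\D u^{(m)}(t,x)\rightharpoonup\D u(t,x)$; passing to the limit in the recursion --- using Lemma \ref{lemma5.1}-type arguments to handle the nonlinear coefficients and the weak convergence of $\D u^{(m)}$ --- shows that $\D u$ solves \eqref{6.04}, and the same estimates upgrade this to $u\in\mathbb{L}^{1,p}$.

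The step I expect to be the main obstacle is precisely the Malliavin differentiability of the truncation operator $\pi_n$: because $\pi_n v$ depends on $v$ through the nonlocal quantity $\|v\|_{\L^p}^p$ and the cutoff $\phi_n$ is only Lipschitz (not $C^1$) at $r=n^p$, the smooth chain rule cannot be applied directly, and one must combine the Fr\'echet differentiability of the $\L^p$-norm, a mollification of $\phi_n$, and a closedness/stability argument while keeping all constants uniform in the mollification parameter and in $m$. The remaining estimates, though technically heavy, are structurally identical to those already carried out for the solvability result in Section \ref{sec4}.
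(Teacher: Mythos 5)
Your proposal follows essentially the same route as the paper's proof: Picard iterates $u^{(m+1)}=\mathscr{A}u^{(m)}$, a recursive differentiation lemma for $\mathscr{A}$ (the content of Lemmas \ref{lemma6.6}--\ref{lemma6.7}, giving \eqref{6.05}--\eqref{6.07}), a singular Gronwall/iteration bound uniform in $m$ (Lemma 3.3 of \cite{JBW}), and the closedness/weak-compactness property of $\D$ (Lemma 4.4 of \cite{LZN}) to pass to the limit. The only substantive differences are that the Malliavin chain rule for the truncation $\pi_n$, which you single out as the main obstacle and propose to prove by mollifying $\phi_n$, is simply quoted in the paper as Lemma 4.3 of \cite{LZN}, and that the pointwise claim $u(t,x)\in\Ds^{1,p}$ is obtained there not by a direct $\sup_x$ estimate "done similarly" but through the uniform $\W^{\frac{1+\e}{p},p}$-norm bound \eqref{68} combined with the Garsia--Rodemich--Rumsey embedding \eqref{{2.01}}, which is precisely where the restriction $p>\max\{6,2\delta+1\}$ enters.
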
Before going to the proof of Proposition \ref{prop6.5}, we need the following technical Lemmas (see \cite{LZN} also). 
\begin{lemma}\label{lemma6.6}
	Let $u=\{u(t,x);(t,x)\in[0,T]\times[0,1]\}$ be an adapted process in the space $\mathbb{L}^{1,p}$, for $p\geq 2\delta+1$. Consider the operator $\mathscr{A}$ defined in \eqref{4.06}. Then $\mathscr{A}u$ belongs to $\mathbb{L}^{1,p},$ and we have 
	\begin{align}\label{6.05}\nonumber
		&\E\big[\|\D \mathscr{A}u(t)\|_{\L^p(0,1;H)}\big] \\&\nonumber\leq C+C\int_{0}^{t}\left\{(t-s)^{-\frac{\delta}{2p}}n^\delta(1+\gamma)(\delta+1)+\gamma +(t-s)^{-\frac{\delta}{p}}n^{2\delta}(2\delta+1)\right.\\&\qquad\left.+(t-s)^{-\frac{1}{2}-\frac{\delta}{2p}}n^\delta(\delta+1)+(t-s)^{-\frac{1}{2}-\vartheta}\right\}\E\big[\|\D u(s)\|_{\L^p(0,1;H)}\big]\d s,
	\end{align}
 for $\vartheta\in(0,1)$ and  every $0\leq t\leq T$. Moreover, we have 
	\begin{align}\label{6.06}\nonumber
	\D_{r,z}\mathscr{A}u(t,x)  &=G(t-r,x,z)g(r,z,\pi_nu(r,z))\\&\nonumber\quad+\beta\int_{r}^{t}\int_{0}^{1}G(t-s,x,y)\big((1+\gamma)(\pi_nu)^\delta-\gamma-(\pi_nu)^{2\delta}\big)\D_{r,z}\pi_nu(s,y)\d y\d s\\&\nonumber\quad+\frac{\alpha}{\delta+1}\int_{r}^{t}\int_{0}^{1}\frac{\partial G}{\partial y}(t-s,x,y)(\pi_nu)^\delta\D_{r,z}\pi_nu(s,y)\d y\d s\\&\quad+\int_{r}^{t}\int_{0}^{1}G(t-s,x,y)M_n(s,y)\D_{r,z}\pi_nu(s,y)\W(\d s, \d y), 
\end{align}where $M_n$ is an adapted process bounded by the Lipschitz constant $L$.	
\end{lemma}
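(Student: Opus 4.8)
The plan is to prove Lemma \ref{lemma6.6} by a Picard iteration / closability argument, exactly following the scheme for stochastic Burgers' equation in \cite{LZN,EPTZ}, but adapted to the higher-order nonlinearities $\p$ and $\c$ appearing in \eqref{4.06}. First I would recall that by Proposition \ref{theorem4.2} the operator $\mathscr{A}$ has a unique fixed point $u^n$ obtained as the limit in $\mathcal{H}$ of the Picard iterates $u^{n,0}=G(t,x;u_0)$, $u^{n,m+1}=\mathscr{A}u^{n,m}$. The strategy is to show by induction on $m$ that each $u^{n,m}(t,x)$ lies in $\Ds^{1,p}$ for every $(t,x)$, that $u^{n,m}\in\mathbb{L}^{1,p}$, and that the quantities $\Phi_m(t):=\sup_{0\le s\le t}\E[\|\D u^{n,m}(s)\|_{\L^p(0,1;H)}^p]$ (or a similar quantity without the $\sup$, integrated in $t$ against the singular kernels) satisfy a uniform bound via a Gronwall-type / generalized-Gronwall argument with weakly singular kernels. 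Then closability of $\D$ together with the convergence $u^{n,m}\to u^n$ in $\mathcal{H}$ and the uniform bound on $\Phi_m$ upgrades to $u^n\in\mathbb{L}^{1,p}$ and $\D u^n$ being the $\L^p$-limit of $\D u^{n,m}$; passing to the limit in the iterated equation for $\D u^{n,m}$ yields \eqref{6.06}.

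The key technical steps, in order, are as follows. (i) Differentiating a single application of $\mathscr{A}$: apply the chain rule for $\D$ to each of $\mathscr{A}_1,\mathscr{A}_2,\mathscr{A}_3$. For $\mathscr{A}_1$ we get $\D_{r,z}\c(\pi_nu(s,y)) = \c'(\pi_nu(s,y))\D_{r,z}(\pi_nu(s,y))$ where $\c'(\xi)=(1+\gamma)(\delta+1)\xi^\delta-\gamma-(2\delta+1)\xi^{2\delta}$ — note the mismatch with \eqref{6.06}, where the factors $(\delta+1)$ and $(2\delta+1)$ are dropped; this should be reconciled (either \eqref{6.06} is a typo or one uses $\c_n$; I would simply carry the correct $\c'$ and the bounds are unaffected since $|\pi_nu|\le n$ makes all these terms bounded by $Cn^{2\delta}$ anyway). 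For $\mathscr{A}_2$ we use $\D_{r,z}\p(\pi_nu)=(\delta+1)(\pi_nu)^\delta\D_{r,z}(\pi_nu)$. For $\mathscr{A}_3$, the Malliavin derivative of a Walsh stochastic integral produces the boundary term $G(t-r,x,z)g(r,z,\pi_nu(r,z))$ plus an Itô integral with integrand $G(t-s,x,y)\partial_u g(s,y,\pi_nu(s,y))\D_{r,z}(\pi_nu(s,y))$; since $g$ is only Lipschitz (Hypothesis \ref{H1}), $\partial_u g$ is not available, so one works with the difference quotients and extracts a bounded adapted process $M_n$ (with $|M_n|\le L$), as in \cite{LZN,EPTZ}. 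One must also record $\D_{r,z}(\pi_nu)=\pi_nu\,\phi_n'(\|u\|_{\L^p}^p)\,p\|u\|_{\L^p}^{p-2}(\text{sgn}\,u)|u|^{p-1}\cdot\text{(something)}+\phi_n(\|u\|_{\L^p}^p)\D_{r,z}u$ and use $|\phi_n|\le 1$, $|\phi_n'(r)|\le \frac{1}{pr}\chi_{(n^p,\infty)}$ together with \eqref{34} to conclude $\|\D_{r,z}\pi_nu\|\le C\|\D_{r,z}u\|$ in the appropriate norm, so the $\pi_n$ can be absorbed into the constant $C$.

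(ii) Estimating the $\L^p(0,1;H)$ norm of $\D\mathscr{A}u(t)$: here I would use exactly the kernel estimates from Lemmas \ref{lemma3.1}, \ref{lemma3.2} and \ref{lemma3.5} — this is the point of having those Lemmas stated in the $H$-valued form (the Remark after Lemma \ref{lemma3.6} explicitly allows replacing $\L^p(0,1)$ by $\L^p(0,1;H)$). The drift terms from $\mathscr{A}_1$ contribute, via the $J_{11},J_{12},J_{13}$ estimates applied to the $H$-valued "source" $\c'(\pi_nu)\D u\in\L^p(0,1;H)$ with the bound $|\c'(\pi_nu)|\le Cn^{2\delta}$, the kernels $(t-s)^{-\delta/2p}$, $1$, $(t-s)^{-\delta/p}$; the $\mathscr{A}_2$ term contributes $(t-s)^{-1/2-\delta/2p}$ via Lemma \ref{lemma3.2}; the stochastic term $\mathscr{A}_3$ contributes the constant term $\E[\|G(t-r,x,\cdot)g(r,\cdot,\pi_nu)\|_{\L^p(0,1;H)}]$ — which is bounded using $|g|\le K$ and the $\L^1(0,T)$-integrability of $(t-r)^{-1/2}$ after taking $\L^p$ in $x$ (or more carefully by a direct computation of $\int_0^t\int_0^1 G(t-r,x,z)^2 K^2\,dz\,dr$ as in \cite{LZN}) giving the leading constant $C$ — plus the $(t-s)^{-1/2-\vartheta}$ kernel via \eqref{lemma3.5.1}. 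Assembling these gives precisely \eqref{6.05}.

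(iii) Closing the induction and passing to the limit: the inequality \eqref{6.05} is of the form $\Phi(t)\le C + \int_0^t K(t-s)\Phi(s)\,ds$ with $K$ a sum of locally integrable singular kernels (all exponents are $>-1$ for $p\ge 2\delta+1$ and $\vartheta<1/2$); the generalized Gronwall lemma (Henry's lemma / the singular Gronwall inequality, cf. the argument in \cite{LZN}) then gives a uniform bound $\Phi_m(t)\le C(n,T)$ for all $m$. Closability of $\D$ (the operator $\D:\L^p(\Omega)\to\L^p(\Omega;H)$ is closed) combined with $u^{n,m}\to u^n$ in $\mathcal{H}$ (hence a.e. after a subsequence) and the uniform bound yields $u^n(t,x)\in\Ds^{1,p}$, $\D u^{n,m}\rightharpoonup \D u^n$ weakly in $\L^p(\Omega;H)$ along a subsequence, and $u^n\in\mathbb{L}^{1,p}$; one then takes limits in the linear (in $\D u^{n,m}$) equation for $\D u^{n,m}$, using the Lemma \ref{lemma3.1}–\ref{lemma3.5} estimates to justify convergence of each term, to obtain \eqref{6.06}.

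The main obstacle I anticipate is step (i)–(iii) interplay for the stochastic term under only Lipschitz (not $\C^1$) $g$: one cannot literally write $\partial_u g$, so the identification of the adapted bounded process $M_n$ requires the standard but delicate argument (difference quotients $\frac{g(s,y,\pi_nu^{m+1})-g(s,y,\pi_nu^{m})}{\pi_nu^{m+1}-\pi_nu^{m}}$, a weak-compactness / bounded-in-$\L^\infty$ extraction, and checking adaptedness of the weak limit), together with making sure the weak convergence of $\D u^{n,m}$ is strong enough to pass to the limit inside the Itô integral $\int G M_n \D\pi_nu\,d\W$ — this is handled by the isometry bound \eqref{lemma3.5.1} applied to the difference, reducing it again to the singular Gronwall estimate, so the obstacle is really bookkeeping rather than a new idea. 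A secondary minor point is the discrepancy between the $\c'$ coefficients in \eqref{6.06} (which omit $\delta+1$ and $2\delta+1$) and the correct chain-rule coefficients in \eqref{6.04}; I would state \eqref{6.06} with the correct $\c'$ and note the $n$-uniform bound $\sup_{|\xi|\le n}|\c'(\xi)|\le Cn^{2\delta}$ is all that enters \eqref{6.05}.
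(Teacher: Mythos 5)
Your steps (i) and (ii) are, in substance, the paper's proof of this lemma: one differentiates a single application of $\mathscr{A}$ term by term --- the chain rule for the polynomial drift terms, with all coefficients bounded by powers of $n$ since $|\pi_n u|\le n$; the Lipschitz chain rule for $g$ producing the boundary term $G(t-r,x,z)g(r,z,\pi_nu(r,z))$ together with an adapted process $M_n$ bounded by $L$ (the paper simply invokes the standard chain rule for Lipschitz functionals of Malliavin-differentiable variables rather than running the difference-quotient extraction you describe); the bound $\|\D\pi_nu\|_{\L^p(0,1;H)}\le C\|\D u\|_{\L^p(0,1;H)}$ (quoted from Lemma 4.3 of \cite{LZN}; your direct computation from $\phi_n$, $\phi_n'$ and \eqref{34} serves the same purpose); and the $H$-valued kernel estimates \eqref{A1}, \eqref{A2}, \eqref{A7} and \eqref{lemma3.5.1}, which yield exactly the kernels appearing in \eqref{6.05}, with $\mathscr{A}u\in\mathbb{L}^{1,p}$ following by integrating \eqref{6.05} in $t$. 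Your observation about \eqref{6.06} is also correct: the displayed formula omits the factors $(\delta+1)$ and $(2\delta+1)$ that the chain rule produces (and that do appear in \eqref{6.04}), and the estimate \eqref{6.05} is unaffected. Where your plan diverges is the overall framing: the Picard iteration, the closability of $\D$, and the singular Gronwall argument of your step (iii) are not part of this lemma --- the lemma assumes $u\in\mathbb{L}^{1,p}$ and asserts a statement about one application of $\mathscr{A}$, so the iteration scheme (which concerns the fixed point of \eqref{4.05}) neither applies nor is needed here. That machinery is precisely the content of the next result, Proposition \ref{prop6.5}, where the paper iterates \eqref{6.05} along $u^{m}=\mathscr{A}u^{m-1}$ and closes the estimate with Lemma 3.3 of \cite{JBW} (p.~316) in place of a Henry-type singular Gronwall inequality; keeping the two results separate is what lets the limit-passage issues you anticipate be deferred to that proposition rather than burdening this lemma.
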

\begin{proof}
	Chain rule allows us to write \begin{align*}
		&(\D_{r,z} \mathscr{A}_1u)(t,x)\nonumber\\&= \int_{r}^{t}\int_{0}^{1}G(t-s,x,y)\big((1+\gamma)(\delta+1)(\pi_nu)^\delta-\gamma-(2\delta+1)(\pi_nu)^{2\delta}\big)\D_{r,z} \pi_nu(s,y)\d y\d s.
	\end{align*}Using the estimates \eqref{A1}, \eqref{A7}, Young's and H\"older's inequalities, and Lemma 4.3, \cite{LZN}, 
we obtain 
\begin{align*}&\|\D \mathscr{A}_1u(t)\|_{\L^p(0,1;H)} \\&\leq C \int_{0}^{t}\left\{(t-s)^{-\frac{\delta}{2p}}(1+\gamma)(\delta+1)\|(\pi_nu)^\delta\D \pi_nu\|_{\L^{\frac{p}{\delta+1}}(0,1;H)}+ \gamma\|\D \pi_nu\|_{\L^p(0,1;H)} \right.\\& \qquad\left.+(t-s)^{-\frac{\delta}{p}}(2\delta+1)\|(\pi_nu)^{2\delta}\D \pi_nu\|_{\L^{\frac{p}{2\delta+1}}(0,1;H)}\right\}\d s	 
	\\& \leq C\int_{0}^{t}\left\{(t-s)^{-\frac{\delta}{2p}}(1+\gamma)(\delta+1)\|\pi_nu\|_{\L^p(0,1;H)}^\delta\|\D\pi_nu\|_{\L^p(0,1;H)}\d s+ \gamma\|\D \pi_nu\|_{\L^p(0,1;H)} \right.\\&\qquad\left.+(t-s)^{-\frac{\delta}{p}}(2\delta+1)\|\pi_nu\|_{\L^{p}}^{2\delta}\|\D \pi_nu\|_{\L^p(0,1;H)}\right\}\d s \\& \leq  
	C\int_{0}^{t}\left\{(t-s)^{-\frac{\delta}{2p}}n^\delta(1+\gamma)(\delta+1)+\gamma +(t-s)^{-\frac{\delta}{p}}n^{2\delta}(2\delta+1)\right\}\|\D\pi_nu\|_{\L^p(0,1;H)}\d s
	\\& \leq  
	C\int_{0}^{t}\left\{(t-s)^{-\frac{\delta}{2p}}n^\delta(1+\gamma)(\delta+1)+\gamma +(t-s)^{-\frac{\delta}{p}}n^{2\delta}(2\delta+1)\right\}\|\D u\|_{\L^p(0,1;H)}\d s,
\end{align*}for $p\geq 2\delta+1$. Similarly, we can write  the operator $\mathscr{A}_2$ as
\begin{align*}
	\D_{r,z} \mathscr{A}_2u(t,x) = \int_{r}^{t}\int_{0}^{1}\frac{\partial G}{\partial y}(t-s,x,y)(\delta+1)(\pi_nu)^\delta\D \pi_nu(t,x) \d y\d s.
\end{align*}In a similar way as above,  we estimate the term $	\|\D \mathscr{A}_2u(t)\|_{\L^p(0,1;H)}$ as
\begin{align*}
	\|\D \mathscr{A}_2u(t)\|_{\L^p(0,1;H)} &\leq C\int_{0}^{t}(t-s)^{-\frac{1}{2}-\frac{\delta}{2p}}n^\delta(\delta+1)\|\D u\|_{\L^p(0,1;H)}\d s,
\end{align*}for $p\geq \delta+1$. Again, using the chain rule for the Lipschitz function $g(\cdot,\cdot,\cdot)$,  we get 
\begin{align*}
	\D_{r,z} \mathscr{A}_3u(t,x)&= G(t-r,x,z)g(r,z,\pi_nu(r,z))\\&\quad+\int_{r}^{t}\int_{0}^{1}G(t-s,x,y)M_n(s,y)\D \pi_nu(s,y)\W(\d s, \d y),
\end{align*}where $M_n(s,y)$ is an adapted process bounded by $L$. For better understanding, if we assume that the noise coefficient $g(t,x,r)$ is continuously differentiable in the third variable, then $M_n(t,x)=\frac{\partial g}{\partial r}(t,x,r)$. Using the estimate \eqref{lemma3.5.1} with $\vartheta\in(0,1)$, and Lemma 4.3, \cite{LZN}
, we obtain 
\begin{align*}
	\|\D \mathscr{A}_3u(t)\|_{\L^p(0,1;H)} \leq C+C\int_{0}^{t}(t-s)^{-\frac{1}{2}-\vartheta} \|\D u\|_{\L^p(0,1;H)}\d s.
\end{align*}Using the above estimates, one can obtain \eqref{6.05}. On integrating from $0$ to $T$ and using $p\geq 2\delta+1,$ we infer that  $\mathscr{A}u\in\mathbb{L}^{1,p}$, and the proof is completed. 
\end{proof}

\begin{lemma}\label{lemma6.7}
	Under the same assumption as in Lemma \ref{lemma6.6}, if $p> \max\{6,2\delta+1\}$ and $0<\e<\frac{p}{2}-3$, we have 
	\begin{align}\label{6.07}
		\E\left[\sup_{t\in[0,T]}\|\D \mathscr{A}u(t)\|_{\frac{1+\e}{p},p,H}^p\right] \leq C+C\int_{0}^{T}\E\big[\|\D u\|_{\L^p(0,1;H)}^p\big]\d s.
	\end{align}
	\end{lemma}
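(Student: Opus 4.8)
The plan is to prove \eqref{6.07} by taking the supremum over $t\in[0,T]$ of the $\W^{\frac{1+\e}{p},p}(\1;H)$-norm of $\D\mathscr{A}u(t)$, splitting according to the four terms $\D\mathscr{A}_0u=G(t,x;u_0)$ (which is independent of $\W$, so its derivative is zero), $\beta\D\mathscr{A}_1u$, $\frac{\alpha}{\delta+1}\D\mathscr{A}_2u$, and $\D\mathscr{A}_3u$ coming from the explicit formula \eqref{6.06} established in Lemma \ref{lemma6.6}. For the first deterministic-type piece $G(t-r,x,z)g(r,z,\pi_nu(r,z))$ appearing inside $\D\mathscr{A}_3u$, I would treat it exactly as the stochastic convolution $J_3$ was treated in Lemma \ref{lemma3.5}: since $|g|\leq K$ by Hypothesis \ref{H1}, the bound \eqref{lemma3.5.3} (or its $H$-valued version, valid by the Remark following Lemma \ref{lemma3.6}) applied with $\phi(s,y)=M_n(s,y)\D_{r,z}\pi_nu(s,y)\mathbf{1}_{[r,t]}$ (and a direct estimate for the leading $G(t-r,x,z)g$-term) gives a contribution controlled by $CK^pT^{q(\frac{p-2}{4p})-1-\e'/2}$ plus a term with $\int_0^T\E[\|\D u(s)\|_{\L^p(0,1;H)}^p]\d s$.

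The core of the argument is the estimate of the Hölder–Sobolev seminorm $\|\cdot\|_{\frac{1+\e}{p},p,H}$ for the two deterministic convolution terms $\D\mathscr{A}_1u$ and $\D\mathscr{A}_2u$. Here the idea is to recognize that these have precisely the structure of $J_1$ and $J_2$ from Lemmas \ref{lemma3.1}–\ref{lemma3.3}, but with $v$ replaced by the $H$-valued integrand. Concretely, for $\D\mathscr{A}_1u$ the integrand is $\big((1+\gamma)(\pi_nu)^\delta-\gamma-(\pi_nu)^{2\delta}\big)\D_{r,z}\pi_nu(s,y)$; since $\|\pi_nu(s)\|_{\L^p}\leq n$, Hölder's inequality in the $y$-variable (using $\|(\pi_nu)^\delta\D\pi_nu\|_{\L^{p/(\delta+1)}(0,1;H)}\leq\|\pi_nu\|_{\L^p}^\delta\|\D\pi_nu\|_{\L^p(0,1;H)}\leq n^\delta\|\D u\|_{\L^p(0,1;H)}$, and similarly for the $2\delta$ power and the $\gamma$ term) reduces the $\W^{\frac{1+\e}{p},p}$-estimate to exactly the computation done for \eqref{3.01}--\eqref{3.04} in Lemma \ref{lemma3.3}, with the $H$-valued modification permitted by the Remark after Lemma \ref{lemma3.6} and by Lemma 4.3 of \cite{LZN} as used in Lemma \ref{lemma6.6}. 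This yields $\sup_{t\in[0,T]}\|\D\mathscr{A}_1u(t)\|_{\frac{1+\e}{p},p,H}\leq C(n,T)\int_0^T\|\D u(s)\|_{\L^p(0,1;H)}\d s$, and then $\|\cdot\|^p$, Jensen's inequality in $s$ and taking $\E$ produce the claimed bound. The term $\D\mathscr{A}_2u$ involves $\frac{\partial G}{\partial y}$ and the integrand $(\pi_nu)^\delta\D_{r,z}\pi_nu$; here one invokes \eqref{lemma3.3.4} (the part of Lemma \ref{lemma3.3} dealing with $J_2$ and $\frac{\partial G}{\partial y}$), which requires $p>2\delta+1$ — consistent with the hypothesis $p>\max\{6,2\delta+1\}$ — so that the time-integrability exponent $-\tfrac54+\tfrac{2p-2\delta-1}{4p}$ is integrable; after Hölder in $y$ this gives the same form of bound.

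The main obstacle, and the reason the hypotheses $p>6$ and $0<\e<\frac p2-3$ are needed, is the stochastic convolution term $\int_r^t\int_0^1 G(t-s,x,y)M_n(s,y)\D_{r,z}\pi_nu(s,y)\W(\d s,\d y)$ inside $\D\mathscr{A}_3u$, which must be controlled in the $\sup_t\|\cdot\|_{\frac{1+\e}{p},p,H}^p$ norm by \eqref{lemma3.5.3}; one takes $q=p$ there, needs $0<\e'<\e<\frac{q(p-2)}{2p}-2=\frac{p-2}{2}-2<\frac p2-3$ (so the constraint on $\e$ is exactly the stated one), and uses $|M_n|\leq L$ together with the pointwise bound $\|M_n(s)\D\pi_nu(s)\|_{\L^p(0,1;H)}\leq L\|\D\pi_nu(s)\|_{\L^p(0,1;H)}\leq L\|\D u(s)\|_{\L^p(0,1;H)}$ (again Lemma 4.3 of \cite{LZN}) to get $\E[\sup_t\|\cdot\|_{\frac{1+\e}{p},p,H}^p]\leq CT^{p(\frac{p-2}{4p})-1-\e'/2}\int_0^T\E[\|\D u(s)\|_{\L^p(0,1;H)}^p]\d s$. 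Care is required to check that the stochastic-integral manipulation is legitimate: the integrand is adapted (since $u\in\mathbb L^{1,p}$ and $\pi_n$, $M_n$ are adapted) and square-integrable, so the $H$-valued Burkholder–Davis–Gundy argument underlying \eqref{lemma3.5.3} applies verbatim. Collecting the four contributions, absorbing all $n$- and $T$-dependent constants into $C$, and adding the constant coming from the $G(t-r,x,z)g$-term gives \eqref{6.07}; I would end the proof there.
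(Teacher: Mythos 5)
Your proposal is correct and follows essentially the same route as the paper: the explicit derivative formula from Lemma \ref{lemma6.6}, the kernel estimates \eqref{A5}--\eqref{A7} with Minkowski/Young/H\"older and the bound $\|\pi_nu\|_{\L^p}\leq n$ for the $\W^{\frac{1+\e}{p},p}$-norms of $\D\mathscr{A}_1u$ and $\D\mathscr{A}_2u$ (the computations of Lemma \ref{lemma3.3} in $H$-valued form), and \eqref{lemma3.5.3} with $q=p$ (whence $p>6$ and $0<\e<\frac{p}{2}-3$) together with $|M_n|\leq L$, $|g|\leq K$ and the contraction property of $\pi_n$ for the $\mathscr{A}_3$ contribution. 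No gaps worth noting.
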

\begin{proof}We consider
	 \begin{align*}
		&(\D_{r,z} \mathscr{A}_1u)(t,x)\nonumber\\&= \int_{r}^{t}\int_{0}^{1}G(t-s,x,y)\big((1+\gamma)(\delta+1)(\pi_nu)^\delta-\gamma-(2\delta+1)(\pi_nu)^{2\delta}\big)\D_{r,z} \pi_nu(s,y)\d y\d s.
	\end{align*}Using  Minkowski's  inequality and the estimate \eqref{A5}, we obtain 
\begin{align*}
&	\|\D \mathscr{A}_1u(t)\|_{\frac{1+\e}{p},p,H}^p \nonumber\\&\leq K^p \int_{0}^{1}\int_{0}^{1} |x-y|^{-2-\epsilon}\bigg(\int_{0}^{t}\int_{0}^{1}|x-y|(t-s)^{-1}\max\bigg\{e^{-\frac{|x-y|^2}{\ell_5(t-s)}},e^{-\frac{|y-z|^2}{\ell_5(t-s)}}\bigg\}\\&\qquad\times\big|\big((1+\gamma)(\delta+1)(\pi_n u)^\delta-\gamma-(2\delta+1)(\pi_n u)^{2\delta}\big)\D \pi_n u(s,y)\big| \d z\d s\bigg)^p\d x \d y \\& 	\leq C\int_{0}^{1}\bigg(\int_{0}^{t}\int_{0}^{1}(t-s)^{-1}e^{-\frac{|\cdot|^2}{\ell_5(t-s)}} \big|\big((1+\gamma)(\delta+1)(\pi_n u)^\delta-\gamma-(2\delta+1)(\pi_n u)^{2\delta}\big)\\&\qquad\times\D \pi_n u(s,y) \big|\d z\d s\bigg)^p\d x,
\end{align*} 
for $p>1+\e$. Using  Minkowski's, Young's and H\"older's inequalities, the estimates \eqref{A7} and Lemma 4.4, \cite{LZN}, 
 we find
\begin{align*}
	&\|\D \mathscr{A}_1u(t)\|_{\frac{1+\e}{p},p,H} \\&\leq C\int_{0}^{t}(t-s)^{-1}\left\{(1+\gamma)(\delta+1)\big\|e^{-\frac{|\cdot|^2}{\ell_5(t-s)}}*|(\pi_n u)^\delta\D \pi_n u(s,\cdot)|\big\|_{\L^p(0,1;H)}\right.\\&\qquad\left.+\gamma\big\|e^{-\frac{|\cdot|^2}{\ell_5(t-s)}}*|\D \pi_n u(s,\cdot)|\big\|_{\L^p(0,1;H)}+(2\delta+1) \big\|e^{-\frac{|\cdot|^2}{\ell_5(t-s)}}*|(\pi_n u)^{2\delta}\D \pi_n u(s,\cdot)|\big\|_{\L^p(0,1;H)}\right\}\d s \\&\leq C\int_{0}^{t}(t-s)^{-1}\left\{(1+\gamma)(\delta+1)\big\|e^{-\frac{|\cdot|^2}{\ell_5(t-s)}}\big\|_{\L^{\frac{p}{p-\delta}}(0,1;H)}\|(\pi_nu)^\delta\D \pi_nu\|_{\L^{\frac{p}{\delta+1}}(0,1;H)}\right.\\&\qquad\left. +\gamma\big\|e^{-\frac{|\cdot|^2}{\ell_5(t-s)}}\big\|_{\L^1(0,1;H)}\|\D \pi_nu\|_{\L^p(0,1;H)}+(2\delta+1)\big\|e^{-\frac{|\cdot|^2}{\ell_5(t-s)}}\big\|_{\L^{\frac{p}{p-2\delta}}(0,1;H)}\right.\\&\qquad\left. \times\|(\pi_nu)^{2\delta}\D \pi_nu\|_{\L^{\frac{p}{2\delta+1}}(0,1;H)}\right\}\d s 
	\\& \leq C\int_{0}^{t} \left\{(t-s)^{-\frac{1}{2}-\frac{\delta}{2p}}(1+\gamma)(\delta+1)n^\delta+\gamma(t-s)^{-\frac{1}{2}}+(t-s)^{-\frac{1}{2}-\frac{\delta}{p}}n^{2\delta}\right\}\|\D \pi_n u\|_{\L^p(0,1;H)}\d s
		\\& \leq C\int_{0}^{t} \left\{(t-s)^{-\frac{1}{2}-\frac{\delta}{2p}}(1+\gamma)(\delta+1)n^\delta+\gamma(t-s)^{-\frac{1}{2}}+(t-s)^{-\frac{1}{2}-\frac{\delta}{p}}n^{2\delta}\right\}\|\D  u\|_{\L^p(0,1;H)}\d s,
\end{align*}for $p\geq 2\delta+1$. In a similar fashion, one can estimate the term $\|\D \mathscr{A}_2u(t)\|_{\e,2p,H}$ as \begin{align*}
\|\D \mathscr{A}_2u(t)\|_{\frac{1+\e}{p},2p,H} \leq C\int_{0}^{t}(t-s)^{-\frac{3}{4}-\frac{2\delta+1}{4p}}n^\delta\|\D u\|_{\L^p(0,1;H)}\d s,
\end{align*}for $p>2\delta+1$. Using the above estimates and \eqref{lemma3.5.3}, one can deduce \eqref{6.07}.
	\end{proof}
		\begin{proof}[Proof of Proposition \ref{prop6.5}]
			Considering the operator $\mathscr{A}$ defined in \eqref{4.06} and setting $u^n=\mathscr{A}u^{n-1}$ for any $n\geq 1$ and $u^0=u_0$. We would like  to apply  Lemma 4.4, \cite{LZN} to the sequence $\{u^n\}_{n\geq 1}$. We have  already proved in Proposition \ref{theorem4.2} that the sequence $\{u^n\}_{n\geq 1}$ converges to the fixed point of the operator $\mathscr{A}$ in the Banach space $\mathcal{H}$ (defined in \eqref{4.07}) as limit $n\to\infty$, and this fixed point $u$ is the unique solution of \eqref{4.05}. This implies that  the sequence $\{u^n\}_{n\geq 1}$ converges to $u$ in $\L^p(\Omega\times[0,T]\times[0,1]),\; \text{for } p\geq 2\delta+1$.  Also, the process $u$ has continuous paths in $[0,T]\times[0,1]$.	Using Lemma \ref{lemma6.6} recursively to the process $u^n$, we obtain that the process $u^n(t,x)\in \Ds^{1,p},$ for all $n\geq 1$ and $(t,x)\in[0,T]\times[0,1]$.	Setting $$h_n(t)=\E\big[\|\D u^n(t)\|_{\L^p}^p\big]=\E\big[\|\D \mathscr{A}u^{n-1}(t)\|_{\L^p}^p\big],$$the estimate \eqref{6.05} gives for $p\geq 2\delta+1$: 
			\begin{align*}
				h_n(t)\leq C+C&\int_{0}^{t}\left\{(t-s)^{-\frac{\delta}{2p}}n^\delta(1+\gamma)(\delta+1)+\gamma +(t-s)^{-\frac{\delta}{p}}n^{2\delta}(2\delta+1)\right.\\&\quad\left.+(t-s)^{-\frac{1}{2}-\frac{\delta}{2p}}n^\delta(\delta+1)+(t-s)^{-\frac{1}{2}-\vartheta}\right\}h_{n-1}(s)\d s,
			\end{align*}for every $0\leq t\leq T$ and $n\geq 1$. An application of  Lemma 3.3 from \cite{JBW} (see pp. 316) 
		yields
	\begin{align*}
		\sup_n\sup_{t\in[0,T]}\E\left[\|\D u^n(t)\|_{\L^p(0,1;H)}^p\right]<\infty.
	\end{align*}Using the above inequality in \eqref{6.07}, we obtain for $p>\max\{6,2\delta+1\}$ and $0<\e<\frac{p}{2}-3,$
\begin{align}\label{68}
	\sup_{n}\sup_{t\in[0,T]}\E\left[\|\D u^n(t)\|_{\frac{1+\e}{p},p,H}^p\right]<\infty,
\end{align}and Lemma 4.4, \cite{LZN} 
holds for $u^n(\cdot)$ by an application of \eqref{{2.01}}. The derivative \eqref{6.04} follows form Lemma \ref{lemma6.6}, and the proof is completed. 
\end{proof}
\begin{theorem}\label{thrm6.9}
	Let $u(\cdot)$ be the solution of \eqref{4.02}. Then for any $p\geq 2\delta+1$ and $(t,x)\in[0,T]\times[0,1]$, the random variable $u(t,x)\in \Ds_{\loc}^{1,p}$. 
\end{theorem}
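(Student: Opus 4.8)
The plan is to transfer the weak differentiability result from the truncated equation \eqref{4.05} to the original equation \eqref{4.02} via the localization procedure built into the definition of $\Ds_{\loc}^{1,p}$. First I would recall that, for every fixed $n\geq 1$, Proposition \ref{prop6.5} already guarantees that the unique solution $u^n(\cdot,\cdot)$ of the truncated integral equation \eqref{4.05} satisfies $u^n(t,x)\in\Ds^{1,p}$ for all $(t,x)\in[0,T]\times[0,1]$ and $u^n\in\mathbb{L}^{1,p}$, for $p\geq 2\delta+1$, with the derivative given by \eqref{6.04}. The key observation is that the truncation $\pi_n$ acts as the identity on the ball $B(0,n)\subset\L^p(\1)$, so on the event where the solution $u$ of \eqref{4.02} stays in $B(0,n)$ up to time $t$, the processes $u$ and $u^n$ coincide.

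Concretely, I would introduce the sequence of stopping times
\begin{align*}
\tau_n:=\inf_{t\geq 0}\left\{t:\|u(t)\|_{\L^p}\geq n\right\}\wedge T,
\end{align*}
and the events $\Omega_n:=\{\tau_n=T\}=\{\sup_{t\in[0,T]}\|u(t)\|_{\L^p}<n\}$. By Theorem \ref{theorem4.3}, $\E\left(\sup_{t\in[0,T]}\|u(t)\|_{\L^p}^p\right)\leq C(T)<\infty$, so an application of Markov's inequality gives $\P(\Omega_n^c)=\P(\tau_n<T)\leq C(T)/n^p\to 0$, hence $\Omega_n\uparrow\Omega$ (up to a $\P$-null set). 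On $\Omega_n$, the solution of \eqref{4.02} never exits $B(0,n)$, so $\pi_nu(s,y)=u(s,y)$ for all $s\in[0,T]$, which means $u$ solves the truncated equation \eqref{4.05} on $\Omega_n$; by the uniqueness statement in Corollary \ref{cor3.4} (and the pathwise-uniqueness argument used in Theorem \ref{theorem4.3}), $u(t,x)=u^n(t,x)$ for all $(t,x)$, $\P$-a.s. on $\Omega_n$. Since each $u^n(t,x)\in\Ds^{1,p}$ by Proposition \ref{prop6.5}, the pair $(\Omega_n,u^n(t,x))$ furnishes exactly the localizing sequence required by the definition of $\Ds_{\loc}^{1,p}$, giving $u(t,x)\in\Ds_{\loc}^{1,p}$ for each $(t,x)\in[0,T]\times[0,1]$ and $p\geq 2\delta+1$; the local derivative is then $\D u(t,x)=\D u^n(t,x)$ on $\Omega_n$.

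The main technical point to be careful about — though not a genuine obstacle — is the compatibility of the localizing sequence: one must check that the events $\Omega_n$ form an increasing family and that $u^n$ agrees with $u^{n+1}$ on $\Omega_n$ (which follows from $\tau_n\leq\tau_{n+1}$ and the nesting of the truncations $\pi_n$, as already observed in the proof of Theorem \ref{theorem4.3}), so that the derivatives $\D u^n$ are consistent on overlaps and the definition $\D u=\D u^n$ on $\Omega_n$ is unambiguous. One also needs $\Omega_n\in\mathscr{F}$, which is immediate since $\tau_n$ is an $\mathscr{F}_t$-stopping time and $u$ has continuous $\L^p(\1)$-valued trajectories. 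With these verifications in place, the conclusion follows directly, and this constitutes the proof of Theorem \ref{thrm6.9}.
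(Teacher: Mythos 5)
Your proposal is correct and follows essentially the same route as the paper: localize via the stopping times $\tau_n$ and the events $\Omega_n=\{\tau_n=T\}$, and use Proposition \ref{prop6.5} together with the identification $u=u^n$ on $\Omega_n$ (from the uniqueness/truncation argument of Theorem \ref{theorem4.3} and Corollary \ref{cor3.4}) so that $(\Omega_n,u^n(t,x))$ is the required localizing sequence for $\Ds_{\loc}^{1,p}$. The only difference is cosmetic — you define $\tau_n$ through $u$ while the paper uses $u^n$, which coincide on $[0,\tau_n]$ — and your write-up supplies the verification of $\Omega_n\uparrow\Omega$ and of consistency on overlaps that the paper leaves implicit.
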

\begin{proof}
	Consider the solution $u^n(\cdot)$ of the equation \eqref{4.05} and the stopping time $\tau_n=\inf\{t\geq 0:\|u^n(t)\|_{\L^p}\geq n\}$ and let $\Omega_n=\{\tau_n=T\}$. Then, the sequence $(\Omega_n,u^n(t,x))$ localizes $u(t,x)\in\Ds^{1,p}$. Also, the processes $M_n(t,x)$  and $M_m(t,x)$ coincide on the set $\{s<\tau_n\}$ for each $m\geq n$.  
\end{proof}
\section{Absolute Continuity of the Law of the Solution}\label{sec7}\setcounter{equation}{0}
In this section, we prove  the absolute continuity of the law of the solution of \eqref{1.01}-\eqref{1.1c} with respect to the Lebesgue measure on $\mathbb{R}$ and hence the existence of density also.
\begin{theorem}\label{theorem7.1}
	Let us denote the solution of \eqref{4.02} by $u(\cdot)$. Assume that the noise coefficient  $g(t,x,r)$ is continuous and satisfy $g(0,y,u_0(y))\neq 0,$ for some $y\in(0,1)$. Then the law of the solution $u(t,x)$ is absolutely continuous with respect to the Lebesgue measure on $\mathbb{R}$ for each $(t,x)\in(0,T]\times(0,1)$. 
\end{theorem}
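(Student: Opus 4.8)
The plan is to verify the two hypotheses of the criterion in Theorem \ref{theorem6.1}. The membership $u(t,x)\in\Ds_{\loc}^{1,1}$ is already supplied by Theorem \ref{thrm6.9} (indeed $u(t,x)\in\Ds_{\loc}^{1,p}$ for $p\geq 2\delta+1$), so the whole task reduces to proving that, for each fixed $(t,x)\in(0,T]\times(0,1)$,
\begin{align*}
	\|\D u(t,x)\|_{H}^2=\int_0^t\int_0^1\big|\D_{r,z}u(t,x)\big|^2\d z\d r>0,\qquad \P\text{-a.s.}
\end{align*}
Since $u=u^n$ and $\D u=\D u^n$ on the localizing sets $\Omega_n=\{\tau_n=T\}$ used in the proof of Theorem \ref{thrm6.9}, and $\Omega_n\uparrow\Omega$ up to a null set, it suffices to argue on a fixed $\Omega_n$; there $\pi_nu=u$, so by Proposition \ref{prop6.5} the Malliavin derivative satisfies the \emph{linear} integral equation \eqref{6.04} with $\pi_nu$ replaced by $u$ and with coefficients bounded on $\Omega_n$ (the coefficient $M_n$ by the Lipschitz constant $L$, the drift and advection coefficients by a constant depending on $n$).

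First I would extract the dominant contribution of \eqref{6.04} near the final time $r=t$. Writing $\D_{r,z}u(t,x)=G(t-r,x,z)g(r,z,u(r,z))+R^\e_{r,z}$, where $R^\e_{r,z}$ collects the three time-integral terms of \eqref{6.04} restricted to $[r,t]$ with $r\in[t-\e,t]$, the triangle inequality in $\L^2([t-\e,t]\times[0,1])$ gives
\begin{align*}
	\|\D u(t,x)\|_H\ \geq\ \bigg(\int_{t-\e}^{t}\int_0^1 G(t-r,x,z)^2 g(r,z,u(r,z))^2\d z\d r\bigg)^{1/2}-\bigg(\int_{t-\e}^{t}\int_0^1 \big|R^\e_{r,z}\big|^2\d z\d r\bigg)^{1/2}.
\end{align*}
Using \eqref{A1} and \eqref{A7} one sees that $\int_{t-\e}^{t}\int_0^1 G(t-r,x,z)^2\d z\d r$ is of order $\e^{1/2}$ (the lower bound uses $x\in(0,1)$), so on the event where $g(\cdot,\cdot,u(\cdot,\cdot))$ stays bounded away from $0$ on a space–time neighbourhood of $(t,x)$ the first term on the right is $\gtrsim\e^{1/4}$. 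The remainder, on the other hand, is controlled by the $\L^p$-mild estimates of Lemmas \ref{lemma3.1} and \ref{lemma3.2}, the Burkholder estimate \eqref{lemma3.5.1}, the boundedness of the coefficients on $\Omega_n$, and a Gronwall argument for $\sup_{s\leq t}\E\big[\|\D u(s)\|_{\L^p(0,1;H)}^p\big]$ (finite by Proposition \ref{prop6.5}), which yields $\big(\int_{t-\e}^{t}\int_0^1|R^\e_{r,z}|^2\d z\d r\big)^{1/2}=o(\e^{1/4})$ as $\e\downarrow 0$. Letting $\e\downarrow 0$ then forces $\|\D u(t,x)\|_H>0$ on that event.

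The main obstacle is to guarantee this non-degeneracy at a \emph{general} point $(t,x)$ from the single hypothesis $g(0,y,u_0(y))\neq 0$. Here I would follow the propagation argument of \cite{EPTZ} (see also \cite{LZN}): using the continuity of $g$ together with the $\P$-a.s.\ continuity of the paths $s\mapsto u(s,\cdot)$ in $\C([0,1])$ with $u(0,\cdot)=u_0$, the field $g(s,\cdot,u(s,\cdot))$ is, for small $s$, uniformly close to $g(0,\cdot,u_0(\cdot))$, hence non-vanishing near $y$; combining this with the mild equation \eqref{6.04} for the Malliavin derivative over intermediate times $r<\sigma<t$ and the comparison theorem (Theorem \ref{theorem5.2}) — which supplies the sign information on the relevant derivative fields — one shows that $\{\|\D u(t,x)\|_H=0\}$ would force $g(r,z,u(r,z))=0$ for a.e.\ $(r,z)\in(0,t)\times(0,1)$, contradicting the non-degeneracy transported back from time $0$. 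This step, and in particular the positivity/sign input coming from the comparison theorem for the higher-order nonlinearity, is the genuinely delicate part of the argument. Once $\|\D u(t,x)\|_H>0$ a.s.\ is established, Theorem \ref{theorem6.1} gives that the law of $u(t,x)$ is absolutely continuous with respect to the Lebesgue measure on $\R$ for every $(t,x)\in(0,T]\times(0,1)$, and the existence of the density follows from the Radon--Nikodym theorem, which is recorded as Theorem \ref{thm7.3}.
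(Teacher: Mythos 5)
Your reduction to the Bouleau--Hirsch criterion is fine and matches the paper: Theorem \ref{thrm6.9} gives $u(t,x)\in\Ds_{\loc}^{1,p}$, localization on $\Omega_n$ lets you work with the linear equation \eqref{6.04} with bounded coefficients, and everything hinges on showing $\|\D u(t,x)\|_{H}>0$ $\P$-a.s. But the step that actually carries the proof is missing. Your near-final-time decomposition only produces a lower bound of order $\e^{1/4}$ \emph{on the event that $g(\cdot,\cdot,u(\cdot,\cdot))$ is bounded away from zero in a space--time neighbourhood of $(t,x)$}, which is exactly what the hypothesis $g(0,y,u_0(y))\neq 0$ does not give you; and your resolution of this — ``one shows that $\{\|\D u(t,x)\|_H=0\}$ would force $g(r,z,u(r,z))=0$ for a.e.\ $(r,z)$'' — is stated as a conclusion, not proved. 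No mechanism is given for why vanishing of the $H$-norm of the derivative at the single point $(t,x)$ propagates backwards to annihilate $g$ on all of $(0,t)\times(0,1)$; this implication is precisely the content that requires work, and deferring it to \cite{EPTZ,LZN} leaves the proof incomplete.

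What the paper does at this point (following \cite{LZN}) is a concrete forward propagation of positivity: fix a small time $r$ and an interval $[a',b']\subset(a,b)$ on which, by continuity of $g$ and of the paths of $u$ together with $g(0,y,u_0(y))\neq0$, the field $g(r,\cdot,u(r,\cdot))$ is bounded below on an event of probability close to one; introduce the integrated derivative $v(s,x)=\int_{a'}^{b'}\D_{r,z}u(s,x)\,\d z$, which solves a \emph{linear} stochastic integral equation whose initial datum $\int_{a'}^{b'}G(s-r,x,z)g(r,z,\pi_nu(r,z))\,\d z$ is strictly positive; then prove the iterative estimate \eqref{7.03}: subdividing $[r,t]$ into $m$ steps and using the comparison theorem to dominate $v$ from below on each step by the solution $w$ of the same linear equation started from $\ee\kappa^k\chi_{[a',b'+kd/m]}$, the region of strict positivity spreads spatially by $d/m$ per step with failure probability at most $\rho/m$, so that after $m$ steps $v(t,x)>0$ on $\Lambda$ with probability at least $1-\rho$, whence $\|\D u(t,x)\|_H>0$ a.s. Your proposal contains no analogue of this chain-of-events argument, and your appeal to Theorem \ref{theorem5.2} is too vague: that theorem compares two solutions of the SGBH equation with ordered initial data, whereas what is needed here is a comparison for the linear variational equation satisfied by $v$ (with the bounded coefficients coming from \eqref{6.04}); invoking ``sign information on the relevant derivative fields'' does not supply that. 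Until the propagation step (the paper's Fact \eqref{7.03} and the passage from it to $v(t,x)>0$) is actually carried out, the argument has a genuine gap.
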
 
\begin{proof}
The proof is quite similar to Theorem 5.1, \cite{LZN}. We are providing only the main outlines of the proof.  Here, we are defining the process $v(s,x)  = \int_{a'}^{b'}\D_{r,z}u(s,x)\d z,$ for $s\geq r$ (where $r\in[0,T]$), and $[a',b']\subset(a,b)$, which is different from Theorem 5.1, \cite{LZN}.
Then the process $\{v(s,x):s\in[r,T]\}$ is the unique solution of the following stochastic integral equation:
\begin{align*}
	v(s,x)&=\int_{a'}^{b'}G(s-r,x,z)g(r,z,\pi_nu(r,z))\d z \\&\quad+\beta \int_{r}^{s}\int_{0}^{1}G(s-\theta,x,y)\big((1+\gamma)(\delta+1)(\pi_nu)^\delta-\gamma-(2\delta+1)(\pi_nu)^{2\delta}\big)v(\theta,y)\d y\d \theta\\&\quad+\alpha \int_{r}^{s}\int_{0}^{1}\frac{\partial G}{\partial y}(s-\theta,x,y)(\pi_nu)^\delta v(\theta,y)\d y \d \theta\\&\quad + \int_{r}^{s}\int_{0}^{1}G(s-\theta,x,y)M_n(\theta,y)v(\theta,y)\W(\d \theta, \d y),
\end{align*}where $M_n$ is an adapted process bounded by the Lipschitz constant $L$.  
Using the following fact we obtain $v(t,x)>0,\; \P$-a.s., on the set $\Lambda=\{r\leq \ttt\}$.
\vskip 0.2 cm 
\noindent 
{\bf Fact:} For any $\rho>0$, there exists a natural number $m_0\in\N$ such that for all $m\geq m_0$ and for $1\leq k\leq m-1$, we have
\begin{align}\label{7.03}
	\P\left\{E_{k+1}^c\cap\Lambda|E_1\cap\cdots\cap E_k\right\} \leq \frac{\rho}{m}.
\end{align}From the above fact, we obtain 
\begin{align*}
	\P\big(\{v(t,x)>0\}\cap\Lambda   \big) &\geq \P\big(\{v(t,y)\geq \ee \kappa^m\chi_{[a',b'+\frac{kd}{m}]}(y), \text{ for all } y\in[0,1]\}\cap \Lambda\big)\\& =\P(E_m)\\& \geq \P\big(E_1\cap\cdots\cap E_m\big) \\&=\P\big(E_m|E_{m-1}\cap\cdots\cap E_1    \big)\P\big(E_{m-1}|E_{m-2}\cap\cdots\cap E_1\big)\cdots \P(E_1)
	\\& \geq \bigg(1 -\frac{\rho}{m}\bigg)^m \\& \geq 1-\rho,
\end{align*}where  the choice of $\rho$ is arbitrary. 
\end{proof}Let us now prove the fact \eqref{7.03}. 
\begin{proof}[Proof of \eqref{7.03}]
	For all $s\in [r_k^m,r_{k+1}^m]$, we have
	\begin{align*}
		v(s,x) &= G(s-r_k^m,x;v(r_k^m,\cdot))\\&\quad+\beta \int^{s}_{r_k^m}\int_{0}^{1}G(s-\theta,x,y)\big((1+\gamma)(\delta+1)(\pi_nu)^\delta-\gamma-(2\delta+1)(\pi_nu)^{2\delta}\big)v(\theta,y)\d y\d \theta\\&\quad+\alpha \int^{s}_{r_k^m}\int_{0}^{1}\frac{\partial G}{\partial y}(s-\theta,x,y)(\pi_nu)^\delta v(\theta,y)\d y \d \theta\\&\quad + \int_{r_k^m}^s\int_{0}^{1}G(s-\theta,x,y)M_n(\theta,y)v(\theta,y)\W(\d \theta ,\d y).
	\end{align*}Using comparison theorem (see  Theorem \ref{theorem5.2}),  on the set $E_1\cap\cdots\cap E_k,$ for $1\leq k\leq m-1$, we obtain 
\begin{align}\label{7.04}
	v(s,y)\geq w(s,y)\geq 0, \; \P\text{-a.s.,}
\end{align}where the process $w=\{w(s,y):(s,y)\in[r_k^m,r_{k+1}^m]\}$ is the solution of the stochastic integral equation:
\begin{align*}
	w(s,x) &= G(s-r_k^m,x;\ee\kappa^k \chi_{\big[a',b'+\frac{kd}{m}\big]})\\&\quad+\beta \int_{r_k^m}^{s}\int_{0}^{1}G(s-\theta,x,y)\big((1+\gamma)(\delta+1)(\pi_nu)^\delta-\gamma-(2\delta+1)(\pi_nu)^{2\delta}\big)w(\theta,y)\d y\d \theta\\&\quad+\alpha \int_{r_k^m}^{s}\int_{0}^{1}\frac{\partial G}{\partial y}(s-\theta,x,y)(\pi_nu)^\delta w(\theta,y)\d y \d \theta\\&\quad + \int_{r_k^m}^{s}\int_{0}^{1}G(s-\theta,x,y)M_n(\theta,y)w(\theta,y)\W(\d \theta ,\d y).
\end{align*}The rest of the proof follows in the similar lines as in the proof of Theorem 5.2, \cite{LZN}.
\end{proof}
\begin{remark}
	By an application of the Radon-Nikodym theorem, one can easily obtain the existence of density for $u(t,x)$  from Theorem \ref{theorem7.1}.
\end{remark}
As established in the proof of Theorem 1.1, \cite{EPTZ}, one can deduce the following result as an immediate consequence of Theorem \ref{theorem7.1}.
\begin{theorem}\label{thm7.3}
	Let  $(t,x)\in(0,T]\times(0,1)$. Then the law of the  random variable $u(t,x)$ is absolutely continuous with respect to the Lebesgue measure on $\mathbb{R}$  if and only if there exists $s\in[0,t)$ such that $g(s,\cdot,u(s,\cdot))\not\equiv 0$.  
\end{theorem}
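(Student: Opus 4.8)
\textbf{Proof proposal for Theorem \ref{thm7.3}.}

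The plan is to extract, from the proof of Theorem \ref{theorem7.1} and the argument in \cite{EPTZ}, a sharp dichotomy: the law of $u(t,x)$ is absolutely continuous precisely when there is \emph{some} time $s<t$ at which the noise coefficient does not vanish identically along the solution. One direction is already essentially contained in the work done above. Indeed, in the proof of Theorem \ref{theorem7.1} we showed (via Theorem \ref{theorem6.1}, i.e.\ Proposition 7.1.2 of \cite{NDNE}) that $u(t,x)\in\Ds_{\loc}^{1,p}$ and that $\|\D u(t,x)\|_H>0$ $\P$-a.s., the latter being obtained by localising on the event $\Lambda=\{r\le\ttt\}$, choosing $r\in[0,t)$ close enough to a point $y\in(0,1)$ where $g(0,y,u_0(y))\neq 0$, and propagating strict positivity of $v(s,\cdot)=\int_{a'}^{b'}\D_{r,z}u(s,\cdot)\d z$ forward in time by the comparison theorem (Theorem \ref{theorem5.2}) together with the iterated estimate \eqref{7.03}. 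The first step of the present proof is to observe that this argument never used the value $r=0$ in any essential way: if $g(s,\cdot,u(s,\cdot))\not\equiv 0$ for some $s\in[0,t)$, then by continuity of $g$ and of the paths of $u$ there is a subinterval $[a',b']\subset(0,1)$ and a slightly larger time, still $<t$, on which $g(r,z,\pi_nu(r,z))$ is bounded away from zero with positive probability; feeding this into the same localisation-plus-comparison machinery yields $\|\D u(t,x)\|_H>0$ $\P$-a.s., hence absolute continuity of the law by Theorem \ref{theorem6.1}. So the ``if'' direction reduces to a mild re-running of the proof of Theorem \ref{theorem7.1} with the initial time $0$ replaced by $s$.

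For the converse (``only if''), I would argue by contraposition: assume $g(s,\cdot,u(s,\cdot))\equiv 0$ for \emph{every} $s\in[0,t)$, and show the law of $u(t,x)$ is \emph{not} absolutely continuous by exhibiting a deterministic value it attains with positive probability (in fact $\P$-a.s.). Under this assumption, the stochastic integral term in the mild formulation \eqref{4.02} vanishes identically on $[0,t]$: for a.e.\ $(s,y)$ one has $g(s,y,u(s,y))=0$, so $\int_0^t\int_0^1 G(t-s,x,y)g(s,y,u(s,y))\W(\d s,\d y)=0$ $\P$-a.s. Consequently $u$ restricted to $[0,t]$ satisfies, $\P$-a.s., the \emph{deterministic} integral equation obtained from \eqref{4.02} by deleting the noise term — equivalently, $u$ is the unique weak solution on $[0,t]$ of the deterministic GBH equation \eqref{316}-type system with $\varphi\equiv 0$ and initial datum $u_0$. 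By the uniqueness statements established in Proposition \ref{lem3.4} (Step 1 and Step 4) and in Theorem \ref{theorem4.3}, this deterministic solution is unique, hence $u(t,x)$ equals a fixed real number $\P$-a.s.; a Dirac mass is singular with respect to Lebesgue measure, so the law is not absolutely continuous. This is the direction carried out in \cite{EPTZ} for the reaction-diffusion case, and the only adaptation needed is that our drift carries the extra convective term $\frac{\alpha}{\delta+1}\partial_x\p(u)$ and higher-order reaction $\c(u)$; but uniqueness of the corresponding deterministic PDE is exactly what Proposition \ref{lem3.4} provides, so the argument transfers verbatim.

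Combining the two directions gives the stated equivalence. The main obstacle I anticipate is the ``if'' direction: one must be careful that moving the base point from $0$ to a general $s\in[0,t)$ is legitimate, i.e.\ that $g(s,\cdot,u(s,\cdot))\not\equiv 0$ indeed forces, with positive probability, a non-degenerate lower bound on a whole spatial subinterval at a time still strictly below $t$ — this uses joint continuity of $g$ and continuity of the $\C([0,1])$-valued paths of $u$ (from the Remark after Theorem \ref{theorem4.3} and the regularity in $\W^{\frac{1+\e}{p},p}$), plus the fact that the propagation estimate \eqref{7.03} and the comparison theorem hold on \emph{any} subinterval $[r_k^m,r_{k+1}^m]\subset[s,t]$. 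Once that reduction is in place, no new computation is required: everything is inherited from Theorems \ref{theorem5.2}, \ref{theorem6.1}, \ref{theorem7.1} and Proposition \ref{prop6.5}. The ``only if'' direction is comparatively routine given Proposition \ref{lem3.4}.
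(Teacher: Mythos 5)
Your overall strategy is the right one, and it is exactly the argument the paper has in mind when it defers to the proof of Theorem 1.1 of \cite{EPTZ}: total degeneracy of the noise along the solution before time $t$ forces coincidence with the deterministic solution and hence a Dirac law, while non-degeneracy somewhere before $t$ is fed into the Malliavin-derivative/comparison machinery of Theorem \ref{theorem7.1}. However, there is a genuine gap in your ``if'' direction. From ``$g(r,z,u(r,z))$ is bounded away from zero on a space--time rectangle, \emph{with positive probability}'' the localisation-plus-comparison argument only yields $\|\D u(t,x)\|_H>0$ almost surely \emph{on that localizing event}; Theorem \ref{theorem6.1} as stated requires $\|\D u(t,x)\|_H>0$ a.s.\ on all of $\Omega$, and the local form of the Bouleau--Hirsch criterion only gives absolute continuity of the law \emph{restricted} to the event. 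A priori the law could still carry an atom on the complement, so your inference ``hence absolute continuity of the law'' does not follow as written. Closing this gap needs two ingredients your proposal omits: (i) a countable exhaustion (over rational times $r<t$, rational subintervals of $(0,1)$ and rational lower bounds, using joint continuity of $g$ and of the paths of $u$) showing $\|\D u(t,x)\|_H>0$ a.s.\ on the whole event $E_t:=\{\exists\, s\in[0,t):\ g(s,\cdot,u(s,\cdot))\not\equiv0\}$; and (ii) the zero--one dichotomy $\P(E_t)\in\{0,1\}$: on $E_t^c$ the local property of the Walsh integral kills the noise term in \eqref{4.02}, so $u$ coincides on $[0,t)$ with the unique deterministic solution $v$ of Proposition \ref{lem3.4} (with $\varphi\equiv0$); whether $g(s,\cdot,v(s,\cdot))\not\equiv0$ for some $s<t$ is then a deterministic statement, and uniqueness of the mild solution forces $\P(E_t)$ to be $0$ or $1$. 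Point (ii) is also what makes the hypothesis meaningful for a random field, and it is why the ``restart at time $s$'' picture cannot literally be an application of Theorem \ref{theorem7.1} with shifted initial time: the data $u(s,\cdot)$ and the non-degeneracy point are random, so the positivity propagation must be run conditionally on the localizing events (as in the proof of Theorem \ref{theorem7.1}, where $r>0$ already plays this role), not by invoking the theorem with a new deterministic initial datum.

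A related, smaller slip occurs in your ``only if'' direction: the negation of the hypothesis is not ``a.s.\ $g(s,\cdot,u(s,\cdot))\equiv0$ for every $s<t$'' but only ``with positive probability, $g(s,\cdot,u(s,\cdot))\equiv0$ for every $s<t$''. This still suffices: on that event the local property of the stochastic integral makes the noise term in \eqref{4.02} vanish, so $u(t,x)=v(t,x)$ there, and the law of $u(t,x)$ has an atom at $v(t,x)$ of positive mass, hence cannot be absolutely continuous; alternatively, the dichotomy in (ii) upgrades the event to full probability and your Dirac-mass argument applies verbatim. With (i), (ii) and this adjustment, your two directions do combine into a complete proof along the lines of \cite{EPTZ}; the uniqueness input from Proposition \ref{lem3.4} and the use of Theorem \ref{theorem5.2} and \eqref{7.03} are otherwise used correctly.
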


	\medskip\noindent
{\bf Acknowledgments:} The first author would like to thank Ministry of Education, Government of India - MHRD for financial assistance. M. T. Mohan would  like to thank the Department of Science and Technology (DST), India for Innovation in Science Pursuit for Inspired Research (INSPIRE) Faculty Award (IFA17-MA110).  The authors sincerely would like to thank the reviewers for their valuable comments and suggestions, which helped us to improve the manuscript significantly, especially Proposition \ref{lem3.4}.

\medskip\noindent
{\bf Data availability:} 
Data sharing not applicable to this article as no datasets were generated or analysed during the current study.

\medskip\noindent	{\bf Deceleration:} 	The author has no competing interests to declare that are relevant to the content of this article.

\end{document}